\newtheorem{thm}{Theorem}[section]
\newtheorem{theorem}[thm]{Theorem}
\newtheorem{proposition}[thm]{Proposition}
\newtheorem{lemma}[thm]{Lemma}
\newtheorem{corollary}[thm]{Corollary}
\DeclareMathOperator{\rank}{rank}
\DeclareMathOperator{\corank}{corank}
\DeclareMathOperator{\spa}{span}
\begin{document}
\title{Singularity Degree of the Positive Semidefinite Matrix Completion Problem}
\author{Shin-ichi Tanigawa\thanks{
Centrum Wiskunde $\&$ Informatica (CWI), Postbus 94079, 1090 GB
Amsterdam, The Netherlands, and
Research Institute for Mathematical Sciences,
Kyoto University, Sakyo-ku, Kyoto 606-8502, Japan.
e-mail: {\tt tanigawa@kurims.kyoto-u.ac.jp}}}
\maketitle

\begin{abstract}
The singularity degree of a semidefinite programming problem is the smallest number of facial reduction steps to make the problem strictly feasible.
We introduce two new graph parameters, called the singularity degree
and the nondegenerate singularity degree, based on the singularity degree of the positive semidefinite matrix completion problem.
We give a characterization of the class of graphs whose parameter value is at most one for each parameter.
Specifically, we show that the singularity degree of a graph is at most one if
and only if the graph  is chordal, and the nondegenerate singularity
degree of a graph is at most one if and only if the graph  is the clique sum of chordal graphs and $K_4$-minor free graphs.
We also show that the singularity degree is bounded by two if the treewidth is bounded by two,
and exhibit  a family of graphs with treewidth  three, whose singularity degree grows linearly in the number of vertices.
%

\medskip
\noindent
{\it Keywords}: positive semidefinite matrix completion problem, singularity degree, facial reduction, uniquely solvable SDP, graph rigidity, universal rigidity

\medskip
\noindent
{\it AMS subject classifications}: 90C22, 90C46, 52C25
\end{abstract}

\section{Introduction}
In the {\em positive semidefinite (PSD) matrix completion problem}, 
we are given a {\em partial matrix}, i.e.,  a matrix whose entries are specified only on a subset of the positions,  and we are asked to fill the missing entries so that the resulting matrix is positive semidefinite.
More formally, given an undirected graph $G=(V,E)$ with vertex set $V=\{1,\dots, n\}$ and an edge weight $c:E\rightarrow [-1,1]$,   the (real) PSD matrix completion problem ${\rm P}(G,c)$ asks to decide whether the following set is empty, and if not  to find a point in it\footnote{Having all diagonal entries specified is a relatively common assumption in this context. 
By allowing $G$ to have self-loops, it is possible to describe a problem where some of diagonal entries are missing. However due to \cite[Lemma 4.2]{dpw}  the problem of analyzing the singularity degree can be always reduced to the subgraph induced by the set of looped vertices. Hence we can safely assume that all the diagonal entries are specified.}:
\[
\left\{X\in{\cal S}^n_+\mid X[i,i]=1 \ \forall i\in V,  \  X[i,j]=c(ij) \ \forall ij\in E \right\}
\]
where ${\cal S}^n_+$ denotes the set of positive semidefinite matrices of size $n$.
Let ${\cal E}_n:=\{X\in {\cal S}^n_+\mid X[i,i]=1 \ \forall i\in V\}$, which is known as the {\em elliptope}.
The  set of edge weights $c$ for which the program ${\rm P}(G,c)$ is feasible is the projection of the elliptope along the coordinate axes~\cite{l97,l98}, 
called a {\em coordinate shadow of the elliptope} and 
denoted by ${\cal E}(G)$.

The {\em singularity degree} of convex cone programming introduced by Sturm~\cite{s00} is defined as  the smallest number of facial reduction steps (in the sense of  Borwein and Wolkowicz~\cite{bw})  to make the problem strictly feasible.
In this paper we shall analyze relations between the singularity degree and the underlying combinatorics by focusing on the PSD matrix completion problem.
%
%
Based on the singularity degree of the PSD matrix completion problem, we introduce two  new graph parameters.
The first parameter is  the {\em singularity degree} ${\sf sd}(G)$ of a graph $G$, which is defined as the largest singularity degree of the matrix completion problem ${\rm P}(G,c)$ over all edge weights $c\in {\cal E}(G)$.
The second parameter is defined in terms of the singularity degree over all nondegenerate inputs.
We say that $c:E\rightarrow  [-1,1]$ is {\em nondegenerate} if $c(ij)\notin \{-1,1\}$ for every edge $ij\in E$
and define  the {\em nondegenerate singularity degree} ${\sf sd}^*(G)$ to be the largest singularity degree of ${\rm P}(G,c)$ over all nondegenerate edge weights $c\in {\cal E}(G)$.
The elimination of degenerate inputs is motivated from both practical and theoretical view points.\footnote{
If $c(ij)=1$, then $X[i,j]=1$ and $X[i,k]=X[j,k]$ for any feasible $X$ and any $k\in V(G)\setminus \{i,j\}$. This implies that the feasible set of ${\rm P}(G,c)$ is equal to that of ${\rm P}(G/ij, c)$, where $G/ij$ is the graph obtained from $G$ by contracting $ij$, and we can always  focus on the contracted smaller problem. (In the Euclidean matrix completion problem, such a degeneracy corresponds to the case when  the distance between $i$ and $j$ is specified to be  zero, in that case $i$ and $j$ being recognized as just one point.) A similar trivial reduction is also possible if $c(ij)=-1$.}

We investigate classes of graphs having small (nondegenerate) singularity degree. Our main results are summarized as follows:
\begin{itemize}
\setlength{\parskip}{0.cm} 
 \setlength{\itemsep}{0.05cm} 
\item ${\sf sd}(G)\leq 1$ if and only if $G$ is chordal (Theorem~\ref{thm:sd1}).
\item ${\sf sd}^*(G)\leq 1$ if and only if $G$ is the clique sum of chordal graphs and $K_4$-minor free graphs (Theorem~\ref{thm:nonsingular}).
\item ${\sf sd}(G)\leq 2$ if $G$ is the clique sum of chordal graphs and $K_4$-minor free graphs (Corollary~\ref{thm:singular_two}).
\item For any positive integer $n$, there is a graph of $n$ vertices  with treewidth three, whose singularity degree is at least $\lfloor (n-1)/3\rfloor$ (Corollary~\ref{cor:large}).
\end{itemize}
We also show that ${\rm sd}(G)=0$ if and only if $E(G)=\emptyset$, and ${\rm sd}^*(G)=0$ if and only if $G$ is $K_3$-minor free.

Our work is motivated by  a recent paper by Drusvyatskiy, Pataki, and Wolkowicz~\cite{dpw}, who analyzed the facial structures of ${\cal E}(G)$ by using the facial reduction algorithm under the general setting of looped graphs.
Specifically, it was shown that 
the singularity degree  is  at most one if and only if the face of $c$ in ${\cal E}(G)$ is {\em exposed} (i.e., it can be written as the intersection of ${\cal E}(G)$ and a supporting hyperplane).
One of their main theorems implies that ${\sf sd}(G)\leq 1$ if $G$ is chordal, and they also posed a question about characterizing graphs with ${\sf sd}(G)=1$.
The first our main theorem gives an answer to this question (see also a discussion after Theorem~\ref{thm:sd1}).

One of  difficult cases for proving the main theorems is to bound the singularity degree of $K_4$-minor free graphs. 
To analyze this class, our idea is to look at the singed version of the PSD matrix completion problem and prove a generalized statement for signed graphs. 
Since the proof is technical and involved, this case is handled in a separate paper~\cite{t}.

It is known that the facial reduction eventually produces a certificate for the rank maximality of a primal feasible solution in SDP.
As shown by Connelly and Gortler~\cite{cg}, this certificate can be used for characterizing uniquely solvable SDP problems. The concept of unique solvability in the PSD matrix completion problem coincides with the {\em universal rigidity} of {\em spherical (bar-joint) frameworks} in rigidity theory.
Adapting the analysis for bounding the singularity degree, we also give a new characterization of graphs whose universal rigidity can be characterized by Connelly's super stability condition.

%


The paper is organized as follows.
Section~\ref{sec:facial} and Section~\ref{sec:psd}  aim to survey the latest connections between semidefinite programming and graph rigidity.
In Section~\ref{sec:facial} we  review the facial reduction and give a formal definition of the singularity degree for general SDP. We also discuss about how it is related to the unique solvability of SDP.
In Section~\ref{sec:psd} we move to the PSD matrix completion problem.
By specializing the facial reduction to the PSD matrix completion problem, we define the singularity degree of graphs. 
In Section~\ref{sec:degree_one} we prepare several fundamental facts to analyze the singularity degree,
and we give a characterization of graphs whose singularity degree is at most one.
In Section~\ref{sec:nondegenerate} we give a characterization of graphs whose nondegenerate singularity degree is at most one,
and in Section~\ref{subsec:two} we discuss about a family of graphs whose singularity degree is bounded by two.
On the other hand, in Section~\ref{sec:large} we give a family of graphs whose singularity degree cannot be bounded by sublinear in the number of vertices.
In Section~\ref{sec:super}, using the analysis of bounding the singularity degree, we give a characterization of the universal rigidity of spherical frameworks.

\section{Facial Reduction, Singularity Degree, Unique Solvability}
\label{sec:facial}
In this section we summarize the facial reduction process, which is required for the formal definition of the singularity degree. 
We also give a comment on the unique solvability of SDP. 
This is an interpretation of the recent result by Connelly and Gortler \cite{cg} for universal rigidity (discussed later) in terms of general SDP, combined with a discussion on degeneracy given in \cite{lv}.
Although the presentation is different, all the key ideas in this section are already given in \cite{a15,lp,cg,lv}.

Throughout the paper the set of symmetric matrices of size $n\times n$ is denoted by ${\cal S}^n$.
We say that a symmetric matrix $S\in {\cal S}^n$ is {\em positive semidefinite on a linear space $W\subseteq \mathbb{R}^n$} if $x^{\top} Sx\geq 0$ for every $x\in W$.
If $W=\mathbb{R}^n$, then $S$ is simply called a positive semidefinite matrix, and  the set of positive semidefinite matrices of size $n\times n$ is denoted by ${\cal S}^n_+$.

\subsection{Facial Reduction}
\label{subsec:facial}
Let $n$ and $m$ be nonnegative  integers and  $E=\{1,\dots, m\}$.
We consider the following SDP feasibility problem and its dual:
\[
\begin{array}{clcc}
{\rm (P)} & \sup & 0  &\\
&\text{s.t.} & \langle A_i, X\rangle \leq b_i & (i\in E) \\
&& X\in {\cal S}^n_+ & 
\end{array}
\quad 
\begin{array}{clc}
{\rm (D)} & \inf & \sum_{i\in E} b_iy_i  \\
&\text{s.t.} & \sum_{i\in E} y_i A_i \succeq 0  \\
& & y_i\geq 0 \\
& & y\in \mathbb{R}^E.
\end{array}
\]
By a theorem of alternatives for SDP, either there exists a positive definite feasible solution $X$ in (P) or there exists a dual feasible solution $y\in \mathbb{R}^E$ with $Y:=\sum_{i\in E} y_i A_i\neq 0$ and $\sum_{i\in E} b_iy_i\leq 0$.  
The latter case can be split into two subcases depending on whether $\sum_{i\in E} b_iy_i< 0$ or not.
If $\sum_{i\in E} b_iy_i< 0$, $y$ certificates that (P) is infeasible.
On the other hand, if $\sum_{i\in E} b_iy_i=0$, then $\langle X, Y\rangle= 0$ holds for any feasible solution $X$ of (P), and hence we may insert $\langle X, Y\rangle=0$ as a new constraint into (P) to get a stronger dual. 

In general, suppose we found nonzero  $Y^1,\dots, Y^{k}$ such that 
(i) $\langle X, Y^j\rangle=0$ for all feasible $X$ of (P) and 
(ii) $Y^j$ is positive semidefinite on ${\cal V}^{j-1}$ 
for each $j$ with $1\leq j\leq k$,
where 
\begin{equation}
\label{eq:calV}
{\cal V}^{j}:=\{x\in \mathbb{R}^n\mid x^{\top}Y^i x=0 \ (i=1,\dots, j)\},
\end{equation}
for $1\leq i\leq k$ and ${\cal V}^0:=\mathbb{R}^n$.
Then we may restrict the cone ${\cal S}^n_+$ in (P) to 
\begin{equation}
\label{eq:S_k}
{\cal S}_{+,k}^n:={\rm cone}\{xx^{\top}\mid x\in {\cal V}^{k}\}
\end{equation}
since we have 
${\cal S}_{+,k}^n=\{X\in {\cal S}^n_+\mid \langle X, Y^j\rangle=0\ (j=1,\dots,k)\}$. 
Note that ${\cal S}_{+,k}^n$ is linearly isomorphic to ${\cal S}_+^r$ with $r=\dim {\cal V}^{k}$,
and the dual cone $({\cal S}_{+,k}^n)^{*}$ consists of symmetric matrices which are positive semidefinite on ${\cal V}^{k}$.
We then consider 
\[
\begin{array}{clcc}
{\rm (P^k)} & \sup & 0  &\\
&\text{s.t.} & \langle A_i, X\rangle \leq b_i & (i\in E) \\
&& X\in {\cal S}_{+,k}^n & 
\end{array}
\quad 
\begin{array}{clc}
{\rm (D^k)} & \inf & \sum_{i\in E} b_iy_i  \\
&\text{s.t.} & \sum_{i\in E} y_i A_i\in  ({\cal S}_{+,k}^n )^* \\
& & y_i\geq 0.
\end{array}
\]
Since ${\cal S}_{+,k}^n$ is linearly isomorphic to a PSD cone, a theorem of alternatives again implies that either there exists a feasible solution $X$ of (P) that is positive definite on ${\cal V}^k$ or 
there exists a feasible solution $y^{k+1}\in \mathbb{R}^E$ of ${\rm (D^k)}$ such that $Y^{k+1}:=\sum_{i\in E} y_i^{k+1} A_i$ is nonzero on ${\cal V}^k$ (i.e., $\exists x\in {\cal V}^k$ with $x^{\top} Y^{k+1} x> 0$) and $\sum_{i\in E} b_iy_i^{k+1}\leq 0$.  
In the latter case, we have $Y^{k+1}\in  ({\cal S}^n_{+,k})^*$ and $X\in {\cal S}^n_{+,k}$ for any feasible solution $X$ of (P), and hence 
\[
0\leq \langle X, Y^{k+1}\rangle=\sum_{i\in E} \langle X, y_i^{k+1}A_i\rangle\leq \sum_{i\in E} b_iy_i^{k+1}.
\] 
Thus, if  $\sum_{i\in E} b_iy_i^{k+1}< 0$ then the sequence $\{y^1, \dots, y^{k+1}\}$ certifies the infeasibility of (P).
Otherwise,  $\langle Y^{k+1}, X\rangle=0$ holds for every feasible solution $X$ of (P).
Since $Y^{k+1}$ is positive semidefinite on ${\cal V}^{k}$, we can repeat the process by inserting $\langle X, Y^{k+1}\rangle=0$ as a new constraint into ${\rm (P^k)}$.  
Moreover, since $Y^{k+1}$ is nonzero on ${\cal V}^k$,   ${\cal V}^{k+1}$ will be strictly smaller than ${\cal V}^{k}$, and the process will be terminated with at most $n$ iterations, where at the last iteration we obtain either an interior point of the feasible set of (P) or a certificate for the infeasibility of (P).

Motivated by terminologies introduced by Connelly and Gortler~\cite{cg} in rigidity context, 
we say that a sequence $\{Y^1, \dots, Y^k\}$ of symmetric matrices  
is {\em nested PSD of rank $d$} if 
$Y^i$ is positive semidefinite on ${\cal V}^{i-1}$ with $\dim {\cal V}^k=n-d$.
Summarizing the above discussion,  the sequence $\{y^1,\dots, y^k\}$ of vectors obtained in the facial reduction satisfies the following properties:
\begin{description}
\setlength{\parskip}{0.cm} 
 \setlength{\itemsep}{0.05cm} 
\item[(c1)] $y^j\geq 0$ for every $j\in [k]$.
\item[(c2)] $\{Y^1,\dots, Y^k\}$ is nested PSD (where $Y^i=\sum_{i\in E} y_i^k A_i$).
\item[(c3)] $\sum_{i\in E} y^j_i b_i\leq 0$ for each $j\in [k]$.
\end{description}
The sequence also enjoys the following relation with the feasible set of (P):
\begin{description}
\setlength{\parskip}{0.cm} 
 \setlength{\itemsep}{0.05cm} 
\item[(c4)] The rank of $\{Y^1,\dots, Y^k\}$ is  $n-s$, where $s$ is the rank of an interior point of the feasible set of (P).
\end{description}
The smallest size of a sequence $\{y^1,\dots, y^{k}\}$ satisfying (c1)(c2)(c3)(c4) is called the {\em singularity degree} of (P).
If (P) has a strict feasible solution (i.e., a positive definite feasible solution), then the singularity degree is set to be zero.

We give a list of further properties,  which are implicit in the above discussion and whose easy proofs are omitted.
\begin{proposition}
\label{prop:sequence_prop}
Suppose that a sequence $\{y^1,\dots, y^k\}$ of vectors in $\mathbb{R}^E$ 
satisfies (c1), (c2), and (c3) with rank $r$.
Then the following hold.
\begin{itemize}
\setlength{\parskip}{0.cm} 
 \setlength{\itemsep}{0.05cm} 
\item $x^{\top}Y^j x=0$ for any eigenvector $x$ of a feasible solution $X$ of (P).
\item If $y_i^j>0$ for some $j$, then $\langle A_i, X\rangle=b_i$ for any feasible $X$ of (P).
\item $\rank X\leq n-r$ for any feasible $X$ of (P).
\end{itemize}
\end{proposition}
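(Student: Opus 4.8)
The plan is to prove one inductive statement and then read off all three items from it. First I would record the structure of the sets $\mathcal{V}^{0}\supseteq\mathcal{V}^{1}\supseteq\dots\supseteq\mathcal{V}^{k}$: each $\mathcal{V}^{j}$ is a linear subspace and $\mathcal{V}^{j}=\{x\in\mathcal{V}^{j-1}\mid x^{\top}Y^{j}x=0\}$. Indeed, by (c2) the quadratic form $x\mapsto x^{\top}Y^{j}x$ is nonnegative on the subspace $\mathcal{V}^{j-1}$, so its zero set inside $\mathcal{V}^{j-1}$ is the radical of a positive semidefinite form, hence a subspace; an easy induction on $j$ gives this for every $j$, and the nesting $\mathcal{V}^{j}\subseteq\mathcal{V}^{j-1}$ is immediate from the definition in \eqref{eq:calV}.

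The core claim is: \emph{for every feasible solution $X$ of {\rm (P)} and every $j\in\{0,1,\dots,k\}$ we have $\im X\subseteq\mathcal{V}^{j}$, and moreover $\langle X,Y^{j}\rangle=0$ whenever $j\geq 1$}, where $Y^{j}:=\sum_{i\in E}y^{j}_{i}A_{i}$. I would prove this by induction on $j$, the case $j=0$ being trivial since $\mathcal{V}^{0}=\mathbb{R}^{n}$. For the inductive step assume $\im X\subseteq\mathcal{V}^{j-1}$ and take a spectral decomposition $X=\sum_{t}\lambda_{t}u_{t}u_{t}^{\top}$ with $\lambda_{t}>0$ and $u_{t}\in\im X\subseteq\mathcal{V}^{j-1}$. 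Since $Y^{j}$ is positive semidefinite on $\mathcal{V}^{j-1}$ by (c2), we get $\langle X,Y^{j}\rangle=\sum_{t}\lambda_{t}\,u_{t}^{\top}Y^{j}u_{t}\geq 0$. On the other hand, by (c1) ($y_{i}^{j}\geq 0$), feasibility ($\langle A_{i},X\rangle\leq b_{i}$) and (c3) ($\sum_{i}y_{i}^{j}b_{i}\leq 0$),
\[
\langle X,Y^{j}\rangle=\sum_{i\in E}y_{i}^{j}\langle A_{i},X\rangle\leq\sum_{i\in E}y_{i}^{j}b_{i}\leq 0 .
\]
Hence $\langle X,Y^{j}\rangle=0$, and as the summands $\lambda_{t}u_{t}^{\top}Y^{j}u_{t}$ are all nonnegative, each $u_{t}$ satisfies $u_{t}^{\top}Y^{j}u_{t}=0$, i.e.\ $u_{t}\in\mathcal{V}^{j}$. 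Since $\mathcal{V}^{j}$ is a subspace, $\im X=\spa\{u_{t}\}\subseteq\mathcal{V}^{j}$, closing the induction.

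The three items then follow immediately. For the first, since $\im X\subseteq\mathcal{V}^{k}\subseteq\mathcal{V}^{j}$, any eigenvector $x$ of $X$ with nonzero eigenvalue (which lies in $\im X$) satisfies $x^{\top}Y^{j}x=0$. For the third, $\rank X=\dim\im X\leq\dim\mathcal{V}^{k}=n-r$ by the definition of the rank of the sequence. For the second, from $\langle X,Y^{j}\rangle=0$ together with (c3) we get $\sum_{i\in E}y_{i}^{j}(\langle A_{i},X\rangle-b_{i})=-\sum_{i\in E}y_{i}^{j}b_{i}\geq 0$, while every term $y_{i}^{j}(\langle A_{i},X\rangle-b_{i})$ is nonpositive by (c1) and feasibility; hence each term vanishes, and $y_{i}^{j}>0$ forces $\langle A_{i},X\rangle=b_{i}$.

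I do not expect a genuine obstacle: this is the standard complementary-slackness bookkeeping underlying facial reduction. The only point that needs care is that the nested-PSD hypothesis (c2) is exactly what makes each $\mathcal{V}^{j}$ a subspace and what guarantees $\langle X,Y^{j}\rangle\geq 0$ after restricting to $\im X$; consequently the induction must carry along the inclusion $\im X\subseteq\mathcal{V}^{j-1}$, not merely the scalar identities $\langle X,Y^{j}\rangle=0$.
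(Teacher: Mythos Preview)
Your proof is correct and is precisely the argument the paper leaves implicit in the discussion preceding the proposition (the paper explicitly omits a formal proof, calling it ``easy'' and ``implicit in the above discussion''). Your careful inductive bookkeeping of the inclusion $\im X\subseteq\mathcal{V}^{j}$, and your reading of the first item as concerning eigenvectors with nonzero eigenvalue, are exactly what is intended.
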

%
%
%
The facial reduction process and Proposition~\ref{prop:sequence_prop} imply the following.

\begin{proposition}
\label{prop:rank}
A feasible solution $X$ of (P) attains the maximum rank 
if and only if  there is a sequence $\{y^1,\dots, y^k\}$ of vectors in $\mathbb{R}^E$ satisfying (c1), (c2), and (c3) with rank $r=n-\rank X$.
\end{proposition}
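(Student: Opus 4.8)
The plan is to prove the two directions separately, treating the "only if" direction as the substantial one. For the "if" direction, suppose such a sequence $\{y^1,\dots,y^k\}$ of rank $r=n-\rank X$ exists. By the third bullet of Proposition~\ref{prop:sequence_prop}, every feasible $X'$ of (P) satisfies $\rank X'\leq n-r=\rank X$, so $X$ already attains the maximum rank. This direction requires essentially nothing beyond quoting the earlier proposition.

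For the "only if" direction, suppose $X$ attains the maximum rank among feasible solutions; I want to produce a sequence satisfying (c1), (c2), (c3) with rank exactly $n-\rank X$. The idea is to run the facial reduction process described above. The process terminates after at most $n$ iterations, producing a sequence $\{y^1,\dots,y^k\}$ satisfying (c1), (c2), (c3), and — by the discussion preceding Proposition~\ref{prop:sequence_prop}, namely property (c4) — the rank of $\{Y^1,\dots,Y^k\}$ equals $n-s$, where $s$ is the rank of an interior point of the feasible set of (P), i.e.\ the maximum rank of a feasible solution. Since $X$ is assumed to attain this maximum rank, $s=\rank X$, so the produced sequence has rank $n-\rank X$ as required. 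The key point to make rigorous is that the facial reduction process does in fact terminate with an \emph{interior} point (relative to the final reduced cone) whose rank equals the maximum feasible rank, which is exactly what the last paragraph before the definition of singularity degree asserts; in the plan I would cite that paragraph directly. If one prefers a self-contained argument, one can instead take the sequence guaranteed by running facial reduction to completion and argue that its associated space ${\cal V}^k$ satisfies $\dim {\cal V}^k=s$: the reduced problem $({\rm P}^k)$ has a strictly feasible solution $X^\ast$ (positive definite on ${\cal V}^k$), hence $\rank X^\ast=\dim{\cal V}^k$, and by maximality of $s$ together with $\rank X^\ast\le s$ and $s\le \dim {\cal V}^k$ (every feasible solution lies in ${\cal S}^n_{+,k}$, whose elements have rank at most $\dim{\cal V}^k$) one gets equality.

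The main obstacle, such as it is, is not a deep one: it is bookkeeping to confirm that the rank of the nested sequence produced by facial reduction coincides with $n$ minus the maximum feasible rank rather than merely being bounded by it, i.e.\ that the process cannot stop "too early." This is handled by the observation that when the process halts, the alternative theorem for the reduced cone guarantees a point that is positive definite on ${\cal V}^k$, so no feasible solution can have rank exceeding $\dim {\cal V}^k$, pinning down the rank exactly. Everything else is a direct appeal to Proposition~\ref{prop:sequence_prop} and the termination analysis already carried out in this section.
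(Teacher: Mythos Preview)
Your proposal is correct and matches the paper's approach exactly: the paper simply states that Proposition~\ref{prop:rank} follows from the facial reduction process together with Proposition~\ref{prop:sequence_prop}, which is precisely the argument you have spelled out (the ``if'' direction from the third bullet of Proposition~\ref{prop:sequence_prop}, the ``only if'' direction from running facial reduction and invoking (c4)). Your optional self-contained justification that $\dim{\cal V}^k$ equals the maximum feasible rank is a welcome elaboration of what the paper leaves implicit.
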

%

A pair $(X, y)$ of a primal and a dual feasible solutions are said to satisfy the {\em complementarity condition} if $\langle X, Y\rangle= 0$ and $(\langle A_i, X\rangle - b_i) y_i = 0$ for every $i\in E$, where $Y=\sum_{i\in E} y_iA_i$.  The complementarity condition implies $\rank X + \rank Y \leq n$.
If $\rank X + \rank Y =n$,  the pair is said to satisfy the {\em strict} complementarity condition. 
%
By Proposition~\ref{prop:sequence_prop} and Proposition~\ref{prop:rank}, in a feasible problem, 
the strict complementarity condition holds  if and only if the singularity degree is at most one.

 \subsection{Unique solvability of SDP}
 \label{subsec:unique}
 We give a supplementary comment on the uniqueness of solutions.
 Let $J_{\{y^1,\dots, y^k\}}=\{i\in E: y_i^j>0 \text{ for some $j$\}}$
and let $J_P=\{i\in E: \langle A_i, X\rangle=b_i \text{ for every feasible $X$ of (P)} \}$. 
If $\{y^1,\dots, y^k\}$ is obtained by the facial reduction, Proposition~\ref{prop:sequence_prop} implies $J_{\{y^1,\dots, y^k\}}\subseteq J_P$. This inclusion may be strict, but one can enforce the equality by adding new vectors into the list.
To see this we consider
\[
\begin{array}{clcc}
 & \sup & t  &\\
&\text{s.t.} & \langle A_i, X\rangle \leq b_i & (i\in J_{\{y^1,\dots, y^k\}}) \\ 
& & \langle A_i, X\rangle \leq b_i-t & (i\notin J_{\{y^1,\dots, y^k\}}) \\ 
&& X\in {\cal S}^n_{+,k}  
\end{array}
\quad 
\begin{array}{clc}
& \inf & 0 \\
&\text{s.t.} & \sum_{i\in E} y_iA_i\in ({\cal S}^n_{+,k})^*  \\
& & \sum_{i\in E} y_i b_i\leq 0 \\
& & \sum_{i\notin J_{\{y^1,\dots, y^k\}}} y_i=1 \\
& & y_i\geq 0, z_i\geq 0.
\end{array}
\]
Note that the primal is strictly feasible after the facial reduction, and hence a dual feasible solution $y$ would certificate a new tight inequality (i.e., some $i\notin J_{\{y^1,\dots, y^k\}}$ such that $\langle A_i, X\rangle=b_i$ for every feasible $X$). Then we can insert the new $y$ into the list keeping the properties (c1)(c2)(c3)(c4). 
One can repeat this until the dual becomes infeasible. If the dual becomes infeasible, the primal is unbounded by the strict duality (as the primal is strict feasible), and we obtain 
\begin{description}
\item[(c5)] $y_i^j>0$ for some $j$ if and only if $\langle A_i, X\rangle=b_i$ for any feasible  $X$ of (P).
\end{description}

The following characterization of the uniqueness of the solution is an interpretation of the result by Connelly and Gortler in \cite{cg} in terms of general SDP.
 \begin{proposition}
 \label{prop:unique}
 Suppose that (P) is feasible.
 Then (P) has a unique solution whose rank is equal to $s$ if and only if there is a sequence $\{y^1, \dots, y^k\}$ of vectors in $\mathbb{R}^E$
 satisfying (c1), (c2), (c3), (c5) with rank $n-s$, and the following nondegeneracy condition:
 \begin{description}
 \item[(c6)] ${\rm span}\{xx^{\top}\mid x\in {\cal V}^k\} \cap ({\rm span}\{A_i: i\in J_{ \{ y^1, \dots, y^k \}} \})^{\top} =\{0\}$,
 \end{description}
 where ${\cal V}^k=\{x\in \mathbb{R}^n: x^{\top}Y^j x=0\ (j=1,\dots, k)\}$ as defined in (\ref{eq:calV}).
 \end{proposition}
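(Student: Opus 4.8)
The plan is to establish both directions of the equivalence by combining the facial reduction machinery already developed with the classical characterization of the facial structure of the PSD cone.

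\medskip

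\textbf{Necessity.} Suppose (P) has a unique solution $X$ of rank $s$. Since $X$ is the unique feasible point, it is in particular the maximum-rank feasible solution, so by Proposition~\ref{prop:rank} there is a sequence $\{y^1,\dots,y^k\}$ satisfying (c1), (c2), (c3) with rank $r=n-s$; by the paragraph preceding the proposition (the auxiliary LP argument producing (c5)), we may assume it also satisfies (c5). It remains to verify (c6). Let $r=\dim{\cal V}^k=s$ and let $W={\cal V}^k$. After restricting to ${\cal S}^n_{+,k}$, which is linearly isomorphic to ${\cal S}^s_+$, the point $X$ lies in the relative interior of the feasible set (it is positive definite on $W$), so unique solvability is equivalent to saying the feasible set of the reduced problem is a single point. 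The reduced feasible set is $\{X'\in{\cal S}^n_{+,k}\mid \langle A_i,X'\rangle\le b_i\ (i\in E)\}$; since $X$ is relatively interior and lies in the relative interior of the face of ${\cal S}^n_{+,k}$ it sits in (namely all of ${\cal S}^n_{+,k}$), the only way the feasible set can be a singleton is that the tangent directions are blocked. The tangent cone of ${\cal S}^n_{+,k}$ at $X$ is all of $\mathrm{span}\{xx^\top\mid x\in W\}$ (since $X$ is in the relative interior), and the constraints active at $X$ are exactly those $i\in J_{\{y^1,\dots,y^k\}}$ by (c5). So uniqueness forces: there is no nonzero $H\in\mathrm{span}\{xx^\top\mid x\in W\}$ with $\langle A_i,H\rangle=0$ for all $i\in J_{\{y^1,\dots,y^k\}}$, which is precisely (c6).

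\medskip

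\textbf{Sufficiency.} Conversely, suppose such a sequence exists. By (c1)(c2)(c3) and rank $n-s$, Proposition~\ref{prop:rank} gives that any feasible $X$ has $\rank X\le s$ and, taking $X$ to be a maximum-rank feasible solution, $\rank X=s$ and $X$ is positive definite on ${\cal V}^k$ (this follows from the facial reduction: the interior point of the feasible set of (P) has rank $s=n-r$ and lies in ${\cal S}^n_{+,k}$, so its range is ${\cal V}^k$). Fix this $X$. Now let $X'$ be any other feasible solution; by Proposition~\ref{prop:sequence_prop} every eigenvector of $X'$ lies in each ${\cal V}^j$, hence the range of $X'$ is contained in ${\cal V}^k$, so $X'\in{\cal S}^n_{+,k}$ as well, and $H:=X'-X\in\mathrm{span}\{xx^\top\mid x\in{\cal V}^k\}$. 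For $i\in J_{\{y^1,\dots,y^k\}}$, by (c5) both $\langle A_i,X\rangle=b_i$ and $\langle A_i,X'\rangle=b_i$, so $\langle A_i,H\rangle=0$; thus $H\in(\mathrm{span}\{A_i:i\in J_{\{y^1,\dots,y^k\}}\})^\perp$. By (c6), $H=0$, i.e.\ $X'=X$. Hence (P) has a unique solution, of rank $s$.

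\medskip

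\textbf{Main obstacle.} The delicate point is the necessity direction: one must argue that unique solvability of the (non-strictly-feasible) problem (P) forces the specific transversality condition (c6) relative to ${\cal V}^k$ and $J_{\{y^1,\dots,y^k\}}$, rather than merely some transversality at an abstract face. This requires knowing that after facial reduction the chosen solution $X$ sits in the relative interior of ${\cal S}^n_{+,k}$ and that (c5) correctly identifies all constraints active throughout the feasible set — so the argument genuinely relies on the passage producing (c5) and on the structure of the tangent cone of a PSD cone at a relative interior point. Care is also needed with the orthogonal complement: the statement writes $(\mathrm{span}\{A_i\})^\top$, which should be read as the orthogonal complement within ${\cal S}^n$ under the trace inner product, and one should note that elements of $\mathrm{span}\{xx^\top\mid x\in{\cal V}^k\}$ are automatically supported on ${\cal V}^k$ so the intersection is taken inside that subspace.
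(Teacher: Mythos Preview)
Your proof is correct and follows essentially the same approach as the paper. The sufficiency direction is virtually identical (showing $X'-X$ lies in the intersection appearing in (c6) and hence vanishes), and your necessity argument, while phrased in tangent-cone language, is the same perturbation argument the paper gives explicitly: if (c6) failed, one would perturb the unique $X$ by a small multiple of an $S$ in the intersection, using (c5) to handle the inactive constraints and relative-interiority of $X$ in ${\cal S}^n_{+,k}$ to preserve positive semidefiniteness.
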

 \begin{proof}
Suppose (P) has a unique solution  $X$ of rank $s$.
Applying the facial reduction one can always obtain a sequence $\{y^1, \dots, y^k\}$ 
 satisfying (c1)(c2)(c3)(c5) with rank $n-s$.
If (c6) does not hold, then there is nonzero $S\in {\cal S}^n$ 
such that $\langle S, A_i\rangle=0$ for $i\in J_{\{y^1,\dots, y^k\}}$ and 
each eigenvector of $S$ is contained in ${\cal V}^k$.
Since $X$ is positive definite on ${\cal V}^k$, 
$X+\epsilon S\succeq 0$ for any sufficiently small $\epsilon>0$.
Moreover, for any sufficiently small $\epsilon>0$, $\langle X+\epsilon S, A_i\rangle\leq b_i$ follows from (c5).
Thus $X+\epsilon S$ is feasible, contradicting the uniqueness.
 
 Conversely, suppose that there is a sequence satisfying the condition of the statement.
 Take any two interior points $X_1$ and $X_2$ of the feasible region of (P).
 By $X_i\in {\cal S}^n_{+,k}$, 
 we have $X_1-X_2\in  {\rm span}\{ xx^{\top} \mid x\in {\cal V}^k\}$.
 On the other hand, by Proposition~\ref{prop:sequence_prop}(ii), 
 $\langle A_i, X_j\rangle=b_i$ for $i\in J_{\{y^1,\dots, y^k\}}$ and $j=1,2$.
 Thus  we have $X_1-X_2\in ({\rm span}\{A_i: i\in J_{ \{ y^1, \dots, y^k \}} \})^{\top}$. 
 Therefore, by (c6), we get $X_1=X_2$.
 \end{proof}
 
 Condition (c6)  given in the statement is motivated from the nondegeneracy condition in strictly feasible SDP  problems given in \cite{ahm}.
 (Note that, if $Y\succeq 0$, then 
 $S\in {\rm span}\{xx^{\top} \mid x^{\top} Y x=0\}$ if and only if $SY=0$ for any symmetric matrix $S$.) 
 In the PSD completion problem, this form corresponds to the so-called Strong Arnold property  (see, e.g.,~\cite{lv}).
 
 It was pointed out in \cite{lv} that the nondegeneracy condition can be described as a relation with a primal maximum rank solution. In the same manner, (c6) can be written as follows:
 \begin{description}
 \item[(c7)] There is no nonzero symmetric matrix $S\in {\cal S}^d$ satisfying 
 $\langle P^{\top}S P, A_i\rangle=0$ for every $i\in J_{y^1,\dots, y^k}$,
 where $d=\dim {\cal V}^k$ and $P$ is a $d\times n$ matrix such that 
 $P^{\top}P$ is an interior point of  ${\cal S}^n_{+,k}={\rm cone}\{xx^{\top}\mid x\in {\cal V}^k\}$.
 \end{description}
 Indeed, any symmetric matrix $S\in {\cal S}^d$ can be written as $N^{\top}N-M^{\top}M$ for some  $d\times d$ matrices $N$ and  $M$.
 Hence $P^{\top}SP$ can be written as $(NP)^{\top}(NP)-(MP)^{\top}(MP)$.
 Thus, $P^{\top}SP$ is another way of expressing matrices in ${\rm span}\{xx^{\top} \mid x\in {\cal V}^k\}$.
%

\section{PSD Matrix Completion Problem}
In this section we define the singularity degree of graphs by specializing the facial reduction process to the PSD matrix completion problem. 
In Section~\ref{subsec:singularity_rigidity} we  discuss how  the new parameter is related to existing notion from rigidity.
In Section~\ref{subsec:singularity_example} we give two examples of computations of singularity degree.

We use the following notation throughout the paper.  
For an undirected graph $G$, $V(G)$ and $E(G)$ denote the vertex and the edge set of $G$, respectively.
If $G$ is clear from the context, we simply use $V$ and $E$ to denote $V(G)$ and $E(G)$, respectively.
For $X\subseteq V$, let $\delta(X)$ denotes the set of edges between $X$ and $V\setminus X$.
If $X=\{v\}$ for some $v\in V$, then $\delta(\{v\})$ is simply denoted by $\delta(v)$. 

Let $G$ be a graph. A pair of subgraphs $\{G_1, G_2\}$ is called a {\em cover} of $G$ if
$V(G_1)\cup V(G_2)=V(G)$ and $E(G_1)\cup E(G_2)=E(G)$.
We say that $G$ is a {\em clique sum} of $G_1$ and $G_2$ if 
$\{G_1, G_2\}$ is a cover of $G$ and 
$V(G_1)\cap V(G_2)$ induces a clique in $G$.

For a finite set $X$,   let $\mathbb{R}^X$ be a $|X|$-dimensional vector space each of whose coordinate is associated with an element in $X$.

We denote ${\cal E}_n=\{X\in {\cal S}_+^n: \forall i, X[i,i]=1 \}$.
The projection of ${\cal E}_n$ to $\mathbb{R}^E$ is denoted by 
$\pi_G: {\cal E}_n\rightarrow \mathbb{R}^E$, i.e., $(\pi_G(X))_{ij}=X[i,j]$ for $ij\in E$.
Let ${\cal E}(G)=\pi_G({\cal E}_n)$ as given in the introduction.
Also let ${\bm e}_i$ be the $n$-dimensional vector whose $i$-th coordinate is one and the other entries are zero.

\label{sec:psd}
\subsection{SDP formulation}
Given a graph $G$ and $c\in [-1,1]^{E}$, the PSD matrix completion problem, 
denoted by ${\rm P}(G,c)$, and its dual are  formulated as follows:
\begin{equation*}
\begin{array}{lll}
 \sup & 0 & \\
{\rm s.t.} & X[i,j]= c(ij) & (ij\in E) \\
 & X[i,i]=1 & (i\in V) \\
 & X\succeq 0
\end{array}
\quad
\begin{array}{lll}
 \inf & \sum_{i\in V} \omega(i)+ \sum_{ij\in E} \omega(ij)c(ij)  & \\
{\rm s.t.}  & \sum_{i\in V} \omega(i) E_{ii}+ \sum_{ij\in E} \omega(ij)E_{ij}\succeq 0 \\
 & \omega\in \mathbb{R}^{V\cup E}
\end{array}
\end{equation*}
where $E_{ij}=({\bf e}_i{\bf e}_j^{\top}+{\bf e}_j{\bf e}_i^{\top})/2$.
Throughout the paper, for $\omega\in \mathbb{R}^{V\cup E}$, we  use the capital letter $\Omega$ to denote $\sum_{i\in V} \omega(i) E_{ii}+ \sum_{ij\in E} \omega(ij)E_{ij}$.

The singularity degree of ${\rm P}(G,c)$ is denoted by ${\sf sd}(G,c)$.
Based on the worst singularity degree, one can define the {\em singularity degree} of $G$ as follows:
\[
{\sf sd}(G)=\max \{{\sf sd}(G,c) \mid c\in {\cal E}(G) \}.
\]
In order to analyze the singularity degree it is sometimes required to impose sings of entries in dual solutions. For this purpose it is reasonable to introduce the inequality setting of the completion problem.
A signed graph $(G,\Sigma)$ is a pair of a graph $G=(V,E)$ and $\Sigma\subseteq E$,
where $G$ may contain parallel edges.
We sometime abuse notation $ij$ to denote an edge between $i$ and $j$ if the sign is clear from the context.

Given a signed graph $(G,\Sigma)$ and $c\in [-1, 1]^{E}$, we are interested in the following SDP, denoted by ${\rm P}(G,\Sigma,c)$, and its dual:
\begin{equation*}
\begin{array}{lll}
\sup & 0 & \\
{\rm s.t.} & X[i,j]\geq c(ij) & (ij\in E\setminus \Sigma) \\
 & X[i,j]\leq c(ij) & (ij\in E\cap \Sigma) \\
 & X[i,i]=1 & (i\in V) \\
 & X\succeq 0
\end{array}
\quad
\begin{array}{lll}
 \inf & \sum_{i\in V} \omega(i)+ \sum_{ij\in E} \omega(ij)c(ij)  & \\
{\rm s.t.} & \omega(ij)\leq 0 & (ij\in E\setminus \Sigma) \\
 & \omega(ij)\geq 0 & (ij\in E\cap \Sigma) \\
 & \sum_{i\in V} \omega(i) E_{ii}+ \sum_{ij\in E} \omega(ij)E_{ij}\succeq 0 \\
 & \omega\in \mathbb{R}^{V\cup E}
\end{array}
\end{equation*}
The signed version of ${\cal E}(G)$ can be defined as 
\begin{equation*}
{\cal E}(G,\Sigma)=\{c\in [-1,1]^E \mid {\rm P}(G, \Sigma, c) \text{ is feasible} \}.
\end{equation*}
As in the undirected case, the singularity degree of signed graphs can be defined by 
\[
{\sf sd}(G,\Sigma)=\max\{{\sf sd}(G,\Sigma,c)\mid c\in {\cal E}(G,\Sigma)\}.
\]
where ${\sf sd}(G,\Sigma,c)$ denotes the singularity degree of ${\rm P}(G,\Sigma, c)$.
%

\subsection{Connection to spherical rigidity and unique completability}
\label{subsec:singularity_rigidity}
As discussed in Section~\ref{subsec:unique}, the singularity degree is closely related to the unique solvability of  SDP. In the PSD matrix completion problem, checking the uniqueness of a  completion corresponds to a rigidity question about the realization of the underlying graph on the unit sphere. 
Thus several concepts from rigidity theory will be useful to analyze the singularity degree.
Terms defined in this subsection are motivated from the corresponding properties in rigidity theory or the graph realization problem~\cite{c,cg,gt,zsy}.

A {\em spherical framework} is a pair $(G, p)$ of a graph and $p:V\rightarrow \mathbb{S}^d$ for some $d$.
More generally, a pair $(G, \Sigma, p)$ of a signed graph $(G,\Sigma)$ and $p:V\rightarrow \mathbb{S}^d$ is called 
a {\em spherical tensegrity}.
A positive semidefinite matrix $X$ of rank $d$ can be represented as $P^{\top}P$ for some $d\times n$ matrix of rank $d$. 
This representation is referred to as a {\em Gram matrix representation} of $X$.
By assigning the $i$-th column of $P$ with each vertex, one can obtain a map $p:V\rightarrow \mathbb{R}^d$. 
If $X\in {\cal E}_n$ (i.e.,  $X[i,i]=1$ for every $i$), $p$ is actually a map to the unit sphere $\mathbb{S}^{d-1}$,
and hence $(G,\Sigma,p)$ forms a spherical tensegrity.
Conversely, any $p:V\rightarrow \mathbb{S}^{d-1}$ defines $X\in {\cal E}_n$ of rank $d$ 
by $X[i,j]=p(i)\cdot p(j)$. This $X$ is denoted by ${\rm Gram}(p)$.


In the context of graph rigidity, a vector $\omega\in \mathbb{R}^{V\cup E}$ is said to be a {\em spherical stress} or (simply {\em stress}).
A stress $\omega\in \mathbb{R}^{V\cup E}$ is said to be {\em equilibrium}  if the following {\em  equilibrium condition} is satisfied at every $i\in V$:
\begin{equation}
\label{eq:equilibrium}
\omega(i) p(i)+\sum_{e=ij\in \delta(i)} \omega(e) p(j)=0.
\end{equation}
A stress $\omega\in \mathbb{R}^{V\cup E}$ is said to be {\em properly signed}  if 
$\omega({ij})\leq 0$ for $ij\in E\setminus \Sigma$ and $\omega({ij})\geq 0$ for $ij\in E\cap \Sigma$. 
A stress $\omega\in \mathbb{R}^{V\cup E}$ is said to be {\em PSD} if $\Omega\succeq 0$.
(Recall that $\Omega:=\sum_{i\in V} \omega(i) E_{ii}+ \sum_{ij\in E} \omega(ij)E_{ij}$.)
The following proposition is a well-known connection between equilibrium stresses and the PSD matrix completion problem (see, e.g., \cite{c,cg,lv}), which says that, for a given solution $X$ of (P), finding a dual solution that forms a complementarity pair with $X$ is equivalent to finding a properly signed equilibrium PSD stress.

\begin{proposition}
\label{prop:super}
Given a signed graph $(G,\Sigma)$ and  $X\in {\cal E}(G,\Sigma)$ of rank $d$ with a Gram matrix representation $X=P^{\top}P$,
let $c=\pi_G(X)$, and let $p:V(G)\rightarrow \mathbb{S}^{d-1}$ be defined by the column vectors of $P$.  
Then $(X, \Omega)$ forms a complementarity pair in ${\rm P}(G,\Sigma, c)$ if and only if  
$\omega$ is a properly signed equilibrium  PSD  stress of  spherical tensegrity $(G, p)$.
\end{proposition}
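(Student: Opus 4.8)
The plan is to unwind the complementarity condition for ${\rm P}(G,\Sigma,c)$ into three independent pieces --- primal feasibility of $X$, dual feasibility of $\Omega$, and the orthogonality relation --- and to match each piece with the corresponding property of $\omega$ viewed as a stress. Since $c=\pi_G(X)$ by hypothesis, $X$ is automatically primal feasible, and moreover every edge constraint ($X[i,j]\geq c(ij)$ or $X[i,j]\leq c(ij)$) and every diagonal constraint $X[i,i]=1$ holds with equality. Hence in the complementarity condition the per-constraint products $(X[i,j]-c(ij))\,\omega(ij)$ and $(X[i,i]-1)\,\omega(i)$ vanish identically (the diagonal dual variables being free, the edge ones sign-constrained), so $(X,\Omega)$ is a complementarity pair if and only if (i) $\Omega$ is dual feasible for ${\rm P}(G,\Sigma,c)$ and (ii) $\langle X,\Omega\rangle=0$. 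Reading off the dual constraints, (i) says precisely that $\omega(ij)\leq 0$ for $ij\in E\setminus\Sigma$, $\omega(ij)\geq 0$ for $ij\in E\cap\Sigma$, and $\Omega\succeq 0$, i.e.\ that $\omega$ is a properly signed PSD stress. It then remains to show that, for such $\omega$, condition (ii) is equivalent to the equilibrium condition~(\ref{eq:equilibrium}).

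For the latter I would use the elementary fact that two positive semidefinite matrices $X,\Omega$ satisfy $\langle X,\Omega\rangle=0$ if and only if $X\Omega=0$. Writing $X=P^{\top}P$ with $P$ of full row rank $d$, the map $P^{\top}$ is injective, so $X\Omega=P^{\top}(P\Omega)=0$ is equivalent to $P\Omega=0$. A direct computation of the columns of $P\Omega$ from the definition $\Omega=\sum_{i\in V}\omega(i)E_{ii}+\sum_{ij\in E}\omega(ij)E_{ij}$ shows that its $i$-th column is a fixed positive multiple of $\omega(i)p(i)+\sum_{e=ij\in\delta(i)}\omega(e)p(j)$; hence $P\Omega=0$ holds if and only if~(\ref{eq:equilibrium}) is satisfied at every vertex. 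Combining the three matchings, $(X,\Omega)$ is a complementarity pair if and only if $\omega$ is a properly signed, equilibrium, PSD stress of $(G,p)$, which is the assertion.

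The only genuinely delicate points here are bookkeeping ones: checking that the mixed equality/inequality form of ${\rm P}(G,\Sigma,c)$ really does specialize the general complementarity condition of Section~\ref{subsec:facial} so that the per-constraint terms drop out, and keeping the normalization factor from $E_{ij}=(\mathbf{e}_i\mathbf{e}_j^{\top}+\mathbf{e}_j\mathbf{e}_i^{\top})/2$ straight when identifying the columns of $P\Omega$ with the equilibrium vectors (an overall scalar, so it does not affect vanishing). Neither is a real obstacle; the substance of the argument is the PSD fact $\langle X,\Omega\rangle=0\Leftrightarrow X\Omega=0$ together with the rank hypothesis on $P$, which together convert the single inner-product identity into the vertex-by-vertex equilibrium equations.
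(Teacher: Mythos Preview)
Your proposal is correct and follows essentially the same route as the paper's proof. Both arguments hinge on the identity that, for $\Omega\succeq 0$, the condition $\langle P^{\top}P,\Omega\rangle=0$ is equivalent to $P\Omega=0$, and then read off the equilibrium equations column by column; the paper is terser (it folds the slackness bookkeeping into the remark that $\langle X,\Omega\rangle=\sum_i\omega(i)+\sum_{ij}\omega(ij)c(ij)$ and phrases the conclusion as ``$\Omega$ is dual optimal''), while you spell out explicitly that $c=\pi_G(X)$ makes every inequality tight and invoke the full row rank of $P$ to pass from $X\Omega=0$ to $P\Omega=0$.
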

\begin{proof}
Note that $\omega$ satisfies the equilibrium condition (\ref{eq:equilibrium}) if and only if 
$P \Omega=0$.
If $\Omega\succeq 0$, the latter condition is equivalent to $\langle P^{\top} P, \Omega\rangle=0$, which is equivalent to 
$\sum_{i\in V} \omega_i+\sum_{ij\in E} \omega_{ij} c_{ij}=0$.
Thus $\Omega$ is dual optimal if and only if $\omega$ is a properly signed equilibrium PSD stress.
\end{proof}

 
Given a spherical tensegrity $(G,\Sigma, p)$ in $\mathbb{S}^{d-1}$, let $X={\rm Gram}(p)$
and $c=\pi_G(X)$.
$(G,\Sigma, p)$ is said to be {\em universally rigid} in $\mathbb{S}^{d-1}$ 
if ${\rm P}(G,\Sigma, c)$  has a unique solution $X$.
By specializing Proposition~\ref{prop:unique} and using the equivalence between (c6) and (c7), we have the following spherical version of the theorem by Connely-Gortler~\cite{cg}.

\begin{proposition}
\label{prop:universal}
Let $(G,\Sigma, p)$ be a spherical tensegrity in $\mathbb{S}^{d-1}$.
Then $(G,\Sigma, p)$ is universally rigid if and only if there is a sequence $\{ \omega^1, \dots. \omega^k\}$ of stresses  satisfying the following property: 
\begin{itemize}
\setlength{\parskip}{0.cm} 
 \setlength{\itemsep}{0.05cm}  
\item Each $\omega^i$ is properly signed.
\item $\{\Omega^1,\dots, \Omega^k\}$ is nested PSD.
\item For every $s$ with $1\leq s\leq k$,
\begin{equation}
\label{eq:semi-equilibrium}
\sum_{i\in V} \omega^s(i)+\sum_{ij\in E(G)} \omega^s(ij)( p(i)\cdot p(j))= 0. 
\end{equation}
\item The rank of the sequence $\{\Omega^1,\dots, \Omega^k\}$ is $n-d$.
\item There is no nonzero symmetric matrix $S\in {\cal S}^d$ satisfying 
$p(i) S p(j)=0$ for every  $ij\in V(G)\cup J_{\{\omega^1, \dots. \omega^k\}}$, 
where $J_{\{\omega^1, \dots. \omega^k\}}=\{ij\in E(G): \omega^s(ij)\neq 0 \text{ for some } s\}$.
\end{itemize}
\end{proposition}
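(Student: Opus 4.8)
\medskip
\noindent
The plan is to obtain Proposition~\ref{prop:universal} by specializing Proposition~\ref{prop:unique}, together with the equivalence of (c6) and (c7), to the program ${\rm P}(G,\Sigma,c)$ with $c=\pi_G({\rm Gram}(p))$, and then rewriting every condition on the dual sequence in stress language via Proposition~\ref{prop:super}. First I would put ${\rm P}(G,\Sigma,c)$ into the standard form (P) of Section~\ref{subsec:facial}: an edge constraint (either $X[i,j]\ge c(ij)$ or $X[i,j]\le c(ij)$) becomes an inequality with matrix $\pm E_{ij}$, the sign read off from $\Sigma$, while each vertex equality $X[i,i]=1$ is split into the two inequalities $\langle E_{ii},X\rangle\le 1$ and $\langle -E_{ii},X\rangle\le -1$. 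A dual vector $y\ge 0$ for this system then amounts exactly to a stress $\omega\in\mathbb{R}^{V\cup E}$ with $\Omega=\sum_{i\in V}\omega(i)E_{ii}+\sum_{ij\in E}\omega(ij)E_{ij}$, and under this dictionary: (c1) translates to ``$\omega$ is properly signed'' (the vertex multipliers are unconstrained since each comes from a pair of opposite inequalities); (c2) translates to ``$\{\Omega^1,\dots,\Omega^k\}$ is nested PSD''; and the rank-$(n-d)$ requirement is literally unchanged. Note that both inequalities coming from $X[i,i]=1$ are tight at every feasible $X$, so every vertex automatically belongs to $J_P$; this is what makes all of $V(G)$ appear unconditionally in the last bullet.

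Next I would handle (c3) and (c6)/(c7). For (c3): since ${\rm Gram}(p)$ is feasible for ${\rm P}(G,\Sigma,c)$, the chain $0\le\langle X,\Omega^j\rangle\le\sum_{i}\omega^j(i)+\sum_{ij}\omega^j(ij)c(ij)\le 0$ (valid for every feasible $X$, as in Proposition~\ref{prop:sequence_prop}) forces $\sum_{i\in V}\omega^j(i)+\sum_{ij\in E}\omega^j(ij)c(ij)=0$, which with $c(ij)=p(i)\cdot p(j)$ is exactly (\ref{eq:semi-equilibrium}); conversely (\ref{eq:semi-equilibrium}) gives back (c3). For the last bullet I would use the form (c7). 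Because the sequence has rank $n-d$ we have $\dim {\cal V}^k=d$, and because ${\rm Gram}(p)=P^{\top}P$ is feasible with column space contained in ${\cal V}^k$ and of the same dimension $d$, that column space equals ${\cal V}^k$; hence $P^{\top}P$ is an interior point of ${\cal S}^n_{+,k}$ and is a legitimate choice of the matrix ``$P^{\top}P$'' in (c7). Taking $p(i)$ to be the $i$-th column of $P$, one has $\langle P^{\top}SP,E_{ii}\rangle=p(i)^{\top}Sp(i)$ and $\langle P^{\top}SP,E_{ij}\rangle=p(i)^{\top}Sp(j)$, so (c7) -- with index set equal to all vertices (always tight) together with the edge support $J_{\{\omega^1,\dots,\omega^k\}}$ of the dual -- reads precisely: there is no nonzero $S\in{\cal S}^d$ with $p(i)^{\top}Sp(j)=0$ for all $ij\in V(G)\cup J_{\{\omega^1,\dots,\omega^k\}}$.

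With these translations the two directions mirror the proof of Proposition~\ref{prop:unique}. For ``only if'': universal rigidity means ${\rm P}(G,\Sigma,c)$ has the unique solution ${\rm Gram}(p)$ of rank $d$, so facial reduction together with the (c5)-enforcement discussion preceding Proposition~\ref{prop:unique} yields a sequence $\{\omega^1,\dots,\omega^k\}$ whose edge support $J_{\{\omega^1,\dots,\omega^k\}}$ equals the set of always-tight edges; then (c6)$\Leftrightarrow$(c7) gives the last bullet verbatim and the other bullets follow from the translations above. For ``if'': given such a sequence, Proposition~\ref{prop:sequence_prop} shows every feasible $X$ has rank at most $d$ and column space inside ${\cal V}^k$, so (as ${\rm Gram}(p)$ is feasible of rank $d$) the maximum rank is exactly $d$ and each maximum-rank feasible $X$ has column space ${\cal V}^k$; for two such solutions $X_1,X_2$ we may write $X_1-X_2=P^{\top}SP$ with $S\in{\cal S}^d$, $p(i)^{\top}Sp(i)=0$ at every vertex (from the vertex equalities) and $p(i)^{\top}Sp(j)=0$ on $J_{\{\omega^1,\dots,\omega^k\}}$ (from Proposition~\ref{prop:sequence_prop}); the last bullet then forces $S=0$, so $X_1=X_2$, and since a convex set with a unique relative interior point is a single point, ${\rm P}(G,\Sigma,c)$ has a unique solution, i.e.\ $(G,\Sigma,p)$ is universally rigid.

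The step I expect to demand the most care is this asymmetry between the two directions, together with the bookkeeping around the vertex equalities. In the ``if'' direction the last bullet is invoked with the possibly-smaller edge set $J_{\{\omega^1,\dots,\omega^k\}}$ in place of the full always-tight edge set, which only strengthens the non-existence-of-$S$ hypothesis and so leaves the argument intact; the (c5)-enforcement is needed only to make the bullet as stated also \emph{necessary}. One must also check that splitting $X[i,i]=1$ into two inequalities does not disturb the complementarity/stress dictionary -- it does not, because the two nonnegative multipliers merge into the single unconstrained scalar $\omega(i)$ attached to $E_{ii}$ -- and that ${\cal V}^k$ is genuinely the column space of ${\rm Gram}(p)$, so that (c7) may be rephrased in terms of $p$ itself; these are small but essential points.
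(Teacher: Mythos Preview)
Your proposal is correct and follows exactly the approach the paper indicates: the paper's entire ``proof'' is the one sentence ``By specializing Proposition~\ref{prop:unique} and using the equivalence between (c6) and (c7)\ldots'', and you have simply carried out that specialization in detail, including the careful (and correct) observations that the vertex equalities land in $J_P$ automatically, that $P^{\top}P={\rm Gram}(p)$ serves as the interior point in (c7), and that (c5) is needed only for the necessity direction. There is nothing to add.
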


\subsection{Examples}
\label{subsec:singularity_example}
In this subsection we shall discuss the singularity degree of two fundamental graph classes, complete graphs and cycles, as examples.
It is a well-known fact that ${\cal S}^n_+$ is an exposed convex set, that is, for each face $F$ there is a supporting hyperplane $H$ with $H\cap {\cal S}^n_+=F$. This fact implies the following proposition. We give a direct proof as it would be instructive.
\begin{proposition}
\label{prop:complete}
For any positive integer $n\geq 2$,  ${\sf sd}(K_n)=1$.
\end{proposition}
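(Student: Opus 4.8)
The plan is to show separately that ${\sf sd}(K_n)\geq 1$ and ${\sf sd}(K_n)\leq 1$. For the lower bound, I would exhibit a single weight $c\in {\cal E}(K_n)$ for which ${\rm P}(K_n,c)$ is feasible but not strictly feasible, so that its singularity degree is at least one. The natural choice is a rank-deficient feasible point: take $p:V\to\mathbb{S}^{d-1}$ with $d<n$ (e.g.\ all points equal, giving the all-ones matrix, or any other low-rank Gram matrix), set $X={\rm Gram}(p)$ and $c=\pi_{K_n}(X)$. Since $K_n$ is complete, every entry of the completion is prescribed, so the feasible set of ${\rm P}(K_n,c)$ is exactly $\{X\}$, a single rank-$d$ matrix; as $d<n$ this matrix is not positive definite, so there is no strictly feasible solution and ${\sf sd}(K_n,c)\geq 1$. (For $n\geq 2$ we can always pick such a $c$, e.g.\ $c\equiv 1$.)

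For the upper bound I would show ${\sf sd}(K_n,c)\leq 1$ for every $c\in {\cal E}(K_n)$, i.e.\ produce a single stress $\omega^1$ (a single facial reduction step) satisfying (c1)--(c4) with the correct rank. Again using that $K_n$ is complete, the feasible set of ${\rm P}(K_n,c)$ is the single point $X=(\text{the prescribed matrix})$, of some rank $s\leq n$. If $s=n$ the problem is strictly feasible and ${\sf sd}=0$. Otherwise, because ${\cal S}^n_+$ is an exposed cone, the minimal face of ${\cal S}^n_+$ containing $X$ is exposed: there is $Y^1=\Omega^1\succeq 0$ with $\langle X,\Omega^1\rangle=0$ and $\ker\Omega^1 = \operatorname{im} X$, hence $\operatorname{rank}\Omega^1 = n-s$. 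Concretely, $\Omega^1$ is the orthogonal projector onto $\ker X$. Translating through Proposition~\ref{prop:super} (equivalently, just checking directly), since every coordinate $i,j$ is an edge of $K_n$ we may freely choose $\omega^1$ with $\Omega^1$ equal to this projector; the complementarity/optimality relation $\langle X,\Omega^1\rangle=0$ gives $\sum_i\omega^1(i)+\sum_{ij}\omega^1(ij)c(ij)=0$, so (c3) holds with equality, (c1) is vacuous (no sign constraints in the unsigned problem), (c2) holds since $\Omega^1\succeq 0$ on $\mathbb{R}^n={\cal V}^0$, and (c4) holds because ${\cal V}^1=\ker\Omega^1=\operatorname{im}X$ has dimension $s$, so the rank of the one-term sequence is $n-s$. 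Thus ${\sf sd}(K_n,c)\leq 1$.

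Combining the two bounds gives ${\sf sd}(K_n)=1$ for all $n\geq 2$. The only mildly delicate point is the upper-bound argument: one must be careful that the face-exposedness of ${\cal S}^n_+$ really does yield a \emph{single} PSD matrix $\Omega^1$ whose kernel is exactly $\operatorname{im}X$ (not merely contains it), so that the rank condition (c4) is met in one step rather than requiring several; this is where the specific structure "$\Omega^1 = $ projector onto $\ker X$" is used. Everything else is bookkeeping: verifying that the unsigned dual has no sign constraints so (c1) is automatic, and that completeness of $K_n$ makes the feasible set a single point so "maximum-rank feasible solution" is unambiguous.
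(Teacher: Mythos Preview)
Your proposal is correct and follows essentially the same approach as the paper. The paper's $\Omega=\sum_{i} u_i u_i^{\top}$ for a basis $\{u_i\}$ of $\ker X$ is precisely your orthogonal projector onto $\ker X$ (up to the harmless choice of an orthonormal basis), and the lower-bound argument via a rank-deficient $X\in{\cal E}_n$ giving a singleton feasible set is identical as well.
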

\begin{proof}
Let $c\in {\cal E}(K_n)$ and take a maximal rank $X\in {\rm P}(K_n, c)$. 
Take a base $\{u_1,\dots, u_k\}$  of the kernel of $X$, and let  $\Omega=\sum_{i=1}^k u_i u_i^{\top}$
and $\omega$ be the corresponding stress (i.e., $\omega(ij)=\Omega[i,j]$).
Then $\Omega\succeq 0$ and $\sum_{i\in V}\omega(i)+\sum_{ij\in E}\omega(ij) c(ij)=\langle \Omega, X\rangle=0$. The construction also implies $\rank X+\rank \Omega=n$.
Thus $\Omega$ satisfies (c2)(c3)(c4) (and (c1) since there is no inequality in the problem), and we get ${\sf sd}(K_n, c)\leq 1$ for any $c\in {\cal E}(K_n)$.

For any $X\in {\cal E}_n$ with ${\rm ker}\ X\neq \{0\}$, ${\rm P}(K_n, \pi_{K_n}(X))$ has a unique solution $X$, which is not positive definite. Hence ${\sf sd}(K_n)\geq 1$. 
\end{proof}

Given a graph $G$, an edge weight $c\in {\cal E}(G)$, and a sequence $\{\omega^1,\dots, \omega^k\}$ of stresses,
we say that a vertex $v$ is {\em stressed at the $i$-th stage} if $\omega^j(uv)\neq 0$ for some $j$ with $j\leq i$ and some $u\in N_G(v)\cup\{v\}$.

\begin{lemma}
\label{lem:cycle}
Let $C_n$ be the cycle of length $n$.
If $n\geq 4$, then ${\sf sd}(C_n)\geq 2$.
\end{lemma}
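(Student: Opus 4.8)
The plan is to exhibit, for each $n \geq 4$, a specific edge weight $c \in {\cal E}(C_n)$ for which the matrix completion problem ${\rm P}(C_n, c)$ has singularity degree at least two, and the natural candidate is a weight whose unique completion is a low-rank (in fact rank $2$) Gram matrix. Concretely, I would place the $n$ vertices as points $p(1), \dots, p(n)$ on the unit circle $\mathbb{S}^1$ in "generic" position — say at distinct angles so that no three of the vectors $p(i)$ are such that $p(i)$ is a linear combination forced by the cycle structure — and set $c(ij) = p(i)\cdot p(j)$ for each edge $ij$ of $C_n$. Then $X = {\rm Gram}(p)$ is a feasible point of rank $2$, and the key first claim is that $X$ is in fact the \emph{maximum rank} feasible solution: the only completions of a generic $n$-cycle on the circle are rank $\leq 2$ (this is essentially the statement that a generic circular realization of a cycle is universally rigid on $\mathbb{S}^1$, which one can verify directly since the stress space of $C_n$ on the circle is spanned by enough equilibrium stresses). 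Consequently the rank $r$ appearing in (c4) is $n - 2$.

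Given that, suppose for contradiction that ${\sf sd}(C_n, c) \leq 1$, i.e. there is a single stress $\omega = \omega^1$ with $\Omega \succeq 0$, $\sum_{i\in V}\omega(i) + \sum_{ij\in E}\omega(ij)c(ij) = 0$, and $\rank \Omega = n - 2$. By Proposition~\ref{prop:super}, $\omega$ is then an equilibrium PSD stress of the spherical framework $(C_n, p)$: at each vertex $i$, $\omega(i)p(i) + \omega(e)p(j) + \omega(e')p(j') = 0$ where $e = ij$, $e' = ij'$ are the two cycle-edges at $i$. The main step is to show that any equilibrium stress of $C_n$ whose associated $\Omega$ is PSD of corank exactly $2$ cannot exist — more precisely, I would argue that equilibrium on the cycle forces $\Omega$ to have rank \emph{strictly less than} $n - 2$, or forces $\Omega$ not to be PSD, depending on the placement. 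The leverage is that $\Omega$ is a weighted "cycle Laplacian"-type matrix: its support graph is contained in $C_n$ (plus the diagonal), so $\Omega$ is a tridiagonal-plus-corners matrix, and the equilibrium conditions with the generic $p(i) \in \mathbb{R}^2$ give $2n$ scalar equations on the $2n$ unknowns $\{\omega(i), \omega(e)\}$; one checks the solution space is small (for a cycle, at most $1$-dimensional up to scaling in the generic case, and the single stress that exists has off-diagonal entries of both signs around the cycle, hence $\Omega$ is indefinite). Combining these, no PSD equilibrium stress of the required corank exists, so one reduction step is not enough: ${\sf sd}(C_n) \geq {\sf sd}(C_n, c) \geq 2$.

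The step I expect to be the main obstacle is the rank/sign analysis of the equilibrium stress matrix $\Omega$ of $C_n$ on the circle: showing that the equilibrium condition pins down the stress space tightly enough that a corank-$2$ PSD representative is impossible. I would handle this by exploiting the tridiagonal-with-corners structure of $\Omega$ — a PSD matrix of this sparsity pattern with corank $2$ is very constrained, since its kernel must contain two independent vectors and the principal-minor/positivity conditions along the cycle propagate rigidly. An alternative, possibly cleaner route is to invoke the strict-complementarity characterization stated just after Proposition~\ref{prop:rank}: ${\sf sd}(C_n, c) \leq 1$ iff there is a primal-dual pair $(X, \Omega)$ with $\rank X + \rank \Omega = n$; I would then show that for the generic circular $c$, every dual-feasible $\Omega$ (i.e. every PSD equilibrium stress) has $\rank \Omega \leq n - 3$, so $\rank X + \rank \Omega \leq (n-2) + (n-3) < n$ is consistent only if... — more carefully, I want $\rank \Omega$ small while $\rank X = n-2$ is forced, so strict complementarity fails and hence $\geq 2$ steps are needed. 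Either way, the technical heart is controlling the stress space of the cycle, and the genericity of $p$ is what makes it manageable.
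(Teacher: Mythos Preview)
Your approach has a fundamental gap at the very first step. The claim that a generic circular realization of $C_n$ is universally rigid on $\mathbb{S}^1$ --- equivalently, that the rank-$2$ Gram matrix is the maximum-rank completion --- is false. For $n \geq 4$ a cycle with generic edge weights is flexible: given generic inner products along the edges there are completions of rank strictly greater than $2$, and indeed for generic $c$ the feasible set of ${\rm P}(C_n, c)$ contains a positive definite point, so ${\sf sd}(C_n, c) = 0$. More decisively, the paper later proves (Corollary~\ref{cor:K4}) that ${\sf sd}^*(C_n) \leq 1$: for \emph{every} nondegenerate $c \in {\cal E}(C_n)$ the singularity degree is at most one. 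Hence no placement with distinct points on the circle can possibly yield ${\sf sd}(C_n, c) \geq 2$, and your stress-sign analysis never gets off the ground --- not because the analysis is wrong, but because the hypothesis (max rank $= 2$) is.

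The paper's proof accordingly takes a different route and constructs a \emph{degenerate} weight: it sets $p(v_1) = p(v_2) = {\bf e}_1$ (so $c(v_1v_2) = 1$) and $p(v_n) = {\bf e}_2$, with $p(v_3),\dots,p(v_{n-1})$ spread along the arc from ${\bf e}_1$ to ${\bf e}_2$, making the cycle tight and ${\rm P}(C_n,c)$ uniquely solvable with rank $2$. The degeneracy $p(v_1)=p(v_2)$ is precisely what drives the lower bound: the equilibrium condition for the first stress $\omega^1$ at $v_2$ reads
\[
\omega^1(v_1v_2)\,p(v_1) + \omega^1(v_2)\,p(v_2) + \omega^1(v_2v_3)\,p(v_3) = 0,
\]
and since $p(v_1)=p(v_2)$ while $p(v_3)$ is independent of them, this forces $\omega^1(v_2v_3)=0$. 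Because every vertex of $C_n$ has degree two, this zero propagates around the cycle and leaves $v_{n-1}$ and $v_n$ unstressed at the first stage; yet the unique rank-$2$ solution forces them to be stressed eventually. That contradiction gives ${\sf sd}(C_n,c)\geq 2$. In short, the missing idea in your proposal is that a degenerate edge weight is \emph{necessary} here, and the argument hinges on the coincidence $p(v_1)=p(v_2)$ rather than on genericity.
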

\begin{proof}
Let $V(C_n)=\{v_1,\dots, v_n\}$. 
We take $p: V(C_n)\rightarrow \mathbb{S}^{1}$ such that 
$p(v_1)=p(v_2)={\bf e}_1$, 
$p(v_n)={\bf e}_2$, 
and each $p(v_i)\ (3\leq i\leq n-1)$ lies on the interior of the spherical line segment between $p(v_1)$ and $p(v_n)$ in the ordering of $p(v_1)=p(v_2), p(v_3), \dots, p(v_{n-1}), p(v_n)$.
Let $X={\rm Gram}(p)$ and $c=\pi_{C_n}(X)$.

Note that ${\rm P}(C_n, c)$ has a unique solution $X$ (due to the metric inequality discussed in Subsection~\ref{subsec:K4}). Since $\rank X=2$,  at least $v_{n-1}$ or $v_n$  must be  stressed at the final stage of the sequence $\omega^1,\dots, \omega^h$ of the stresses obtained by the facial reduction to P$(C_n, c)$.
(Otherwise there would be a rank-three solution.)

Recall that $\omega^1$ satisfies the equilibrium condition (\ref{eq:equilibrium}) at every vertex.
The equilibrium condition at $v_2$ gives
$0=\omega^1(v_1v_2)p(v_1)+\omega^1(v_2)p(v_2)+\omega^1(v_2v_3)p(v_3)$.
Since $p(v_1)=p(v_2)$ while $p(v_2)$ and $p(v_3)$ are linearly independent, 
we have $\omega^1(v_2v_3)=0$.
Since $v_3$ has degree two, 
the equilibrium condition at $v_3$ further implies $\omega^1(v_3v_4)=0$.
Thus $v_3$ is not stressed at the first stage.
Continuing this, we conclude that neither $v_{n-1}$ nor $v_n$ is stressed at the first stage,
meaning that ${\sf sd}(C_n, c)> 1$.
\end{proof}
In Corollary~\ref{cor:K4} we will show that ${\sf sd}(C_n)=2$ if $n\geq 4$. 

\section{Characterizing Graphs with Singularity Degree at Most One}
\label{sec:degree_one}
In this section we give several facts, which will be useful for analyzing the singularity degree of graphs. As an application of these facts we give a characterization of graphs $G$ with ${\sf sd}(G)\leq 1$.

\subsection{Basic facts}

The following lemma will be a fundamental tool to analyze the behavior of the singularity degree.
\begin{lemma}
\label{lem:clique_sum}
Let $G$ be a clique sum of two undirected graphs $G_1$ and $G_2$.
Then \[{\sf sd}(G)\leq \max\{{\sf sd}(G_1), {\sf sd}(G_2)\}.\]
\end{lemma}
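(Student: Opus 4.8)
The plan is to take a worst-case edge weight $c\in{\cal E}(G)$ for the clique-sum graph $G$ and build, from optimal facial-reduction sequences for the two pieces, a facial-reduction sequence for $G$ whose length is at most $\max\{{\sf sd}(G_1),{\sf sd}(G_2)\}$. First I would fix a maximum-rank feasible $X\in{\rm P}(G,c)$, with Gram representation $X=P^\top P$ and associated spherical framework $(G,p)$. Let $K=V(G_1)\cap V(G_2)$, which induces a clique. The key structural point to establish is that $X$ restricted to $V(G_i)$, call it $X_i$, is a maximum-rank solution of ${\rm P}(G_i,c|_{E(G_i)})$, and conversely that maximum-rank solutions of the two subproblems can be glued along the clique $K$ into a feasible $X$ for $G$ of the appropriate rank. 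Gluing along a clique is exactly the classical PSD completion fact (a graph is chordal iff every partial PSD matrix with specified clique-principal submatrices completes), but here I only need the single-clique-sum version: given PSD $X_1,X_2$ agreeing on the $K\times K$ block, there is a PSD completion of $G$, and one can take it to have rank $\rank X_1+\rank X_2-\rank X_1[K,K]$ by choosing the "most generic" completion of the two off-diagonal blocks. This identifies $n-\rank X$ with $(\,|V(G_1)|-\rank X_1)+(\,|V(G_2)|-\rank X_2)$, i.e. the corank adds up correctly across the clique sum.

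Next I would lift stresses. If $\omega^1,\dots,\omega^{k_1}$ is a facial-reduction sequence for $(G_1,c|_{E(G_1)})$ of length $k_1={\sf sd}(G_1,c|_{E(G_1)})$ with rank $|V(G_1)|-\rank X_1$, and $\eta^1,\dots,\eta^{k_2}$ likewise for $G_2$, I extend each $\omega^j$ to a stress $\tilde\omega^j$ on $G$ by zero-padding on $V(G_2)\setminus K$ and on $E(G_2)\setminus E(G_1)$ (and symmetrically for $\tilde\eta^j$), and then set $\mu^j=\tilde\omega^j+\tilde\eta^j$ for $j=1,\dots,\max\{k_1,k_2\}$ (treating missing $\omega^j$ or $\eta^j$ as zero). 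Because $\Omega^j$ and its analogue $H^j$ for $G_2$ have disjoint supports outside the $K\times K$ block, $M^j:=\Omega^j\oplus_K H^j$ is a sum of a PSD-on-${\cal V}^{j-1}_1$ matrix and a PSD-on-${\cal V}^{j-1}_2$ matrix acting on complementary coordinate blocks that overlap only on $K$; I need to check that the combined sequence $\{M^1,\dots\}$ is nested PSD on $G$. This works because ${\cal V}^{j}$ for the combined sequence is (essentially) the product of the two individual kernels-so-far intersected appropriately on $K$, so positive semidefiniteness of $M^j$ on ${\cal V}^{j-1}$ reduces to positive semidefiniteness of $\Omega^j$ and $H^j$ on their respective restricted spaces. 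Properties (c1) (sign/vacuous for undirected), (c3) ($\sum\omega^j(i)+\sum\omega^j(ij)c(ij)\le 0$, here $=0$) are preserved under summing padded stresses since the supports only overlap on the clique where both carry an equilibrium-type identity. And (c4) — the rank of the combined sequence equals $n-\rank X$ — follows from the corank additivity established above.

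The main obstacle I anticipate is the nested-PSD bookkeeping on the clique $K$: the two padded sequences are not supported on disjoint vertex sets, they share $K$, so the $K\times K$ block of $M^j$ is $\Omega^j[K,K]+H^j[K,K]$, and one must be careful that this sum, together with the off-block parts, is still PSD on the correct subspace and that the ranks genuinely add (no cancellation, no unexpected extra kernel vectors) — equivalently that the face of ${\cal S}^n_+$ reached by the combined reduction is exactly the face corresponding to the rank of $X$. I would handle this by choosing $X$ (and hence the individual max-rank $X_i$) generically on the clique-sum off-diagonal blocks so that $X$ achieves the maximal possible rank $\rank X_1+\rank X_2-\rank X_1[K,K]$, and by arguing that each $u\in\ker X$ decomposes, along the clique-sum splitting, into a vector in $\ker X_1$ (padded) plus one in $\ker X_2$ (padded); the span of these is exactly ${\cal V}^{\max\{k_1,k_2\}}$ of the combined sequence. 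A clean way to make the gluing of stresses rigorous is to observe $P\,M^j\,P^\top=0$ is equivalent to the equilibrium condition of the stress $\mu^j$ on $(G,p)$, reduce everything to equilibrium stresses via Proposition~\ref{prop:super}, and then note that an equilibrium stress on $G_1$ (zero outside $G_1$) is automatically in equilibrium on $G$ at every vertex — at a vertex of $V(G_2)\setminus K$ trivially, and at a vertex of $K$ because the $G_2$-incident terms are all zero. That reduces the whole argument to the corank-additivity of clique sums plus Proposition~\ref{prop:rank}, which is the part I would write out in full.
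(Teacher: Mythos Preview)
Your approach is essentially identical to the paper's: zero-pad the two facial-reduction sequences, sum them indexwise, and verify (c1)--(c4); the paper makes the nested-PSD check precise via the inductive claim that the restriction of any $x\in{\cal V}^{j}$ to $V(G_i)$ lies in ${\cal V}^{j}_i$ (your ``product of kernels intersected on $K$'' picture), and handles (c4) by gluing max-rank Gram representations of the pieces along the clique and showing that every $x\in{\cal V}^{h}$ lies in the row span of the glued $P$ --- exactly your corank-additivity argument. One minor caution: your closing appeal to Proposition~\ref{prop:super} only applies verbatim to the first stress, since for $j>1$ the matrix $M^j$ is only PSD on ${\cal V}^{j-1}$ and the relation $PM^jP^\top=0$ does not follow from $\langle X,M^j\rangle=0$ alone; but your earlier direct argument on the ${\cal V}^{j}$ spaces is the correct one and matches the paper.
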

\begin{proof}
Take $c\in {\cal E}(G)$ with ${\sf sd}(G)={\sf sd}(G,c)$, and  let $c_1$ and $c_2$ be the restrictions of $c$ to $E(G_1)$ and $E(G_2)$, respectively.
For $i=1,2$ let $\omega^1_i,\dots, \omega^{h_i}_i$ be the sequence obtained by the facial reduction for P$(G_i, c_i)$, where $h_i={\sf sd}(G_i, c_i)$.
Each of these stresses can be regarded as a stress of $G$ by appending zero columns and zero rows.
Also we may assume that $h_1=h_2$ by adding zero stresses in the list.
Hence let $h=h_1$.
Then we define a sequence $\omega^1, \dots, \omega^{h}$ of stresses of $G$ by 
 $\omega^j=\omega_1^j+\omega_2^j \ (1\leq j\leq h)$.
 
 We show that the sequence $\{\omega^1,\dots, \omega^h\}$ satisfies (c1)(c2)(c3)(c4).
 (c1) and (c3) trivially hold.
 To see (c2), 
 let ${\cal V}^{j}=\{x\in \mathbb{R}^{V(G)}\mid x^{\top} \Omega^\ell x=0\ (\ell=1,\dots, j)\}$.
 ${\cal V}^{j}_1$ and ${\cal V}^{j}_2$ are similarly defined, using $\Omega^{\ell}_i$ in place of $\Omega^{\ell}$.
 By induction on $j\in [h]$, we first show that 
 \begin{equation}
 \label{eq:clique_sum}
 \text{for any $x\in {\cal V}^{j}$ and $i\in\{1,2\}$, 
 the restriction $x_i$ of $x$ to $V(G_i)$ belongs to ${\cal V}^{j}_i$.}
 \end{equation}
This is trivial for $j=0$.
 For $j\geq 1$,  note that $x\in {\cal V}^{j}\subseteq {\cal V}^{j-1}$
 and hence $x_i\in {\cal V}^{j-1}_i$ by induction.
 Since $\Omega^{j}_i$ is positive semidefinite on ${\cal V}^{j-1}_i$, we have 
 $x_i^{\top}\Omega^{j}_ix_i\geq 0$. 
Combining this with $x\in {\cal V}^{j}$, we get $0=x^{\top}\Omega^{j}x=\sum_{i=1,2} x_i^{\top} \Omega_i^{j} x_i \geq 0$, meaning that $x_i^{\top} \Omega_i^{j} x_i=0$. 
In other words,  (\ref{eq:clique_sum}) holds.

Now (\ref{eq:clique_sum}) implies that, 
for any $j$ and $x\in {\cal V}^{j-1}$, 
we have $x^{\top} \Omega^j x=\sum_{i=1,2} x^{\top}_i \Omega^j_ix_i\geq 0$.
Thus $\Omega^j$ is positive semidefinite on ${\cal V}^{j-1}$, and we have proved (c3).

To see (c4) we take a maximum rank solution $X_i$ of ${\rm P}(G_i,c_i)$,
and let $X_i=P_i^{\top}P_i$ be a Gram matrix representation of $X_i$, where each $P_i$ is row-independent.
Also let $S=V(G_1)\cap V(G_2)$.
Since $S$ induces a clique, we have $X_1[S,S]=X_2[S,S]$.
Hence each $P_i$ can be expressed as 
$P_i=\begin{array}{|c|c|} 
\hline
 \multirow{2}{*}{$\tilde{P}_i$} & 0 \\  \cline{2-2}
 & P_S  \\\hline
\end{array}$
for some row-independent matrix $P_S$ of size $d\times |S|$.
We concatenate $P_1$ and $P_2$ (by changing the row ordering of $P_2$ appropriately) such that 
\begin{equation}
\label{eq:sum}
P=\begin{array}{|c|c|c|} 
 \multicolumn{1}{r}{V(G_1)\setminus S} &  \multicolumn{1}{r}{S} &  \multicolumn{1}{r}{V(G_2)\setminus S } \\
\hline
 \multirow{2}{*}{$\tilde{P}_1$} & 0 & 0 \\  \cline{2-3}
 & P_S &   \multirow{2}{*}{$\tilde{P}_2$}  \\\cline{1-2}
 0 & 0 &\\ \hline
\end{array}.
\end{equation}
Then $P^{\top} P$ forms a feasible solution of ${\rm P}(G,\Sigma, c)$.

To show (c4), it suffices to prove that any $x\in {\cal V}^{h}$ is spanned by the rows of $P$.
Take any $x\in {\cal V}^{h}$. 
By  (\ref{eq:clique_sum}), $x_i\in {\cal V}_i^{h}$.
Hence $x_i$ is spanned by the rows of $P_i$.
Since $P_S$ is row independent, $x$ restricted to $\mathbb{R}^S$ is uniquely represented as a linear combination of the row vectors of $P_S$.
Hence, it follows from (\ref{eq:sum}) that the representation of $x_1$ as a linear combination of the row vectors of $P_1$ can be concatenated with that of $x_2$ as a linear combination of the rows of $P_2$ so that $x$ is represented as a  linear combination of the rows of $P$. In other words $x$ is spanned by rows of $P$.

This completes the proof.
\end{proof}



The next goal is to show Lemma~\ref{lem:induced_subgraph} which says that the singularity degree does not increase if we  take an induced subgraph.
To show this we need the following technical lemma, which will be also used in Section~\ref{sec:large}.
Recall that a vertex $v$ is said to be {\em stressed at the $i$-th stage} if $\omega^j(uv)\neq 0$ for some $j$ with $j\leq i$ and $u\in N_G(v)\cup\{v\}$.

\begin{lemma}
\label{lem:singularity_stress}
Let $(G,\Sigma)$ be a signed graph.
Let $\{\omega^1,\dots, \omega^k\}$ be the facial reduction sequence of P$(G, \Sigma, c)$, 
 $V_i$ be the set of vertices stressed at the $i$-th stage for $1\leq i\leq k$, and $V_0=\emptyset$. Then for each $i$ with $1\leq i \leq k$ the following hold.
\begin{itemize}
\setlength{\parskip}{0.1cm} 
 \setlength{\itemsep}{0.1cm} 
\item $\Omega^{i}[V\setminus V_{i-1}, V\setminus V_{i-1}]$ is positive semidefinite.
\item  For any $p:V(G)\rightarrow \mathbb{S}^{d-1}$ such that ${\rm Gram}(p)$ is feasible in ${\rm P}(G, \Sigma, c)$,
$\omega^{i}$ satisfies the "projected equilibrium condition" to the orthogonal complement of 
$\spa p(V_{i-1})$, that is,  
setting $\psi_{i-1}:\mathbb{R}^{d}\rightarrow (\spa  p( V_{i-1}) )^{\top}$ to be the  projection to the orthogonal complement of $\spa p(V_{i-1})$, we have
\begin{equation}
\label{eq:projected_eq}
\psi_{i-1}(p(v))\omega^{i}(v)+\sum_{e=uv\in \delta(v)} \psi_{i-1}(p(u))\omega^{i}(e)=0 \qquad (\forall v\in V). 
\end{equation}
\end{itemize}
\end{lemma}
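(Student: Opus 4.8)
The plan is to prove both statements simultaneously by induction on $i$, exploiting the structure of the facial reduction process described in Section~\ref{subsec:facial}. The key observation is that, by definition of $V_{i-1}$, the stress $\omega^i$ together with all previous stresses $\omega^1,\dots,\omega^{i-1}$ has no nonzero entry incident to a vertex outside $V_{i-1}$ at the stages up to $i-1$; in particular the submatrix $\Omega^\ell[V\setminus V_{i-1}, V\setminus V_{i-1}]$ vanishes for $\ell \le i-1$. Hence for $x$ supported on $V\setminus V_{i-1}$ we automatically have $x \in {\cal V}^{i-1}$, and by (c2) (nested PSD) the matrix $\Omega^i$ is positive semidefinite on ${\cal V}^{i-1}$, which yields $x^\top \Omega^i x \ge 0$ for all such $x$. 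This is exactly the statement that $\Omega^i[V\setminus V_{i-1}, V\setminus V_{i-1}] \succeq 0$, the first bullet.

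For the second bullet I would use Proposition~\ref{prop:super} (or rather the equilibrium characterization underlying it): $P\Omega^i = 0$ for any Gram matrix representation $P$ of a feasible $X = {\rm Gram}(p)$, because $\Omega^i$ is PSD on ${\cal V}^{i-1}$ and $\langle X, \Omega^i\rangle = 0$ (this last equality is (c3) combined with feasibility of $X$, as in Proposition~\ref{prop:sequence_prop} and the proof of Proposition~\ref{prop:super}). The raw equilibrium condition $P\Omega^i = 0$ reads, coordinate by coordinate, as \eqref{eq:equilibrium} for $\omega^i$ with respect to $p$. Now I apply the projection $\psi_{i-1}$ onto $(\spa p(V_{i-1}))^\top$ to each of these vector equations. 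The point is that the $v$-th equilibrium equation involves $p(v)$ and $p(u)$ for $u \in N_G(v)$ scaled by $\omega^i(\cdot)$; for vertices $v \in V_{i-1}$ the projected equation \eqref{eq:projected_eq} could a priori fail to hold because projecting does not commute with the constraint, but for vertices $v \notin V_{i-1}$ all the stress coefficients $\omega^i(v)$ and $\omega^i(uv)$ involving $u \in V_{i-1}$ are the only "new" contributions — and actually, since $v$ is not stressed at the $i$-th stage, \textit{all} of $\omega^i(v)$ and $\omega^i(uv)$ for $u \in N_G(v)$ vanish when $v \notin V_i$. The subtle case is $v \in V_i \setminus V_{i-1}$: here I would observe that the only neighbours $u$ of $v$ contributing to the $v$-th equilibrium equation with $u \in V_{i-1}$ get killed by $\psi_{i-1}$, and the rest lie in $V \setminus V_{i-1}$, giving precisely \eqref{eq:projected_eq}. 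For $v \in V_{i-1}$ one argues that $p(v) \in \spa p(V_{i-1})$ so $\psi_{i-1}(p(v)) = 0$, and similarly one needs that the neighbours of $v$ that are stressed land inside $\spa p(V_{i-1})$ — this requires a separate inductive claim that $\spa p(V_{i-1})$ is "$\Omega^\ell$-invariant" in an appropriate sense for $\ell \le i-1$.

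The main obstacle I anticipate is handling the $v \in V_{i-1}$ case cleanly: one must show that after projecting, the contributions from edges $uv$ with $u \notin V_{i-1}$ but $\omega^i(uv) \ne 0$ either do not occur (because such an edge would put $v$ into a stressed set, hence $u \in V_i$ but that is compatible) or cancel. The cleanest route is probably to prove, as an auxiliary invariant carried through the induction, that $\Omega^i (\spa\{{\bf e}_v : v \in V_{i-1}\}) \subseteq \spa\{{\bf e}_v : v \in V_{i-1}\}$ together with the "block" structure of $\Omega^i$ relative to the partition $V_{i-1} \mathbin{\dot\cup} (V\setminus V_{i-1})$, and then read off both bullets from the block decomposition of $P\Omega^i = 0$ after projection. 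Once that block structure is established, the projected equilibrium identity \eqref{eq:projected_eq} follows by a direct, routine computation, and the PSD claim is immediate from restricting the quadratic form to coordinate vectors supported off $V_{i-1}$ and using (c2).
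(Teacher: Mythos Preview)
Your argument for the first bullet is correct and matches the paper's: vectors supported on $V\setminus V_{i-1}$ lie in ${\cal V}^{i-1}$ because $\Omega^\ell$ has zero rows and columns indexed by $V\setminus V_{i-1}$ for every $\ell\le i-1$, and then (c2) gives $x^\top\Omega^i x\ge 0$ for all such $x$.

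The second bullet, however, has a genuine gap. You assert $P\Omega^i=0$ for the \emph{unprojected} $P$, justifying it by ``$\Omega^i$ is PSD on ${\cal V}^{i-1}$ and $\langle X,\Omega^i\rangle=0$''. That implication is valid only when $\Omega^i\succeq 0$ on all of $\mathbb{R}^n$, which is guaranteed only for $i=1$. For $i\ge 2$ the matrix $\Omega^i$ is PSD merely on the proper subspace ${\cal V}^{i-1}$, and the standard ``$A,B\succeq 0$, $\langle A,B\rangle=0\Rightarrow AB=0$'' argument no longer applies. Indeed, the whole point of the lemma stating a \emph{projected} equilibrium condition is that the raw equilibrium $P\Omega^i=0$ can fail for $i\ge 2$. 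Your subsequent case analysis on $v\in V_{i-1}$, $v\in V_i\setminus V_{i-1}$, $v\notin V_i$, and the proposed auxiliary invariant about $\Omega^i$-invariance of coordinate subspaces, do not repair this: they still start from equations that need not hold before projection.

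The paper avoids this by working from the outset with the projected matrix $\tilde P$ whose $j$-th column is $\psi_{i-1}(p(j))$. The rows of $\tilde P$ are linear combinations of the rows of $P$, hence (by Proposition~\ref{prop:sequence_prop}) lie in ${\cal V}^i$, giving $\langle \Omega^i,\tilde P^\top\tilde P\rangle=0$. Because $\tilde P$ has zero columns on $V_{i-1}$, this reduces to $\langle \Omega^i[W,W],\,\tilde P_W^\top\tilde P_W\rangle=0$ with $W=V\setminus V_{i-1}$. Now \emph{both} factors are PSD---the first precisely by the bullet you already proved---so the trace-zero argument legitimately yields $\tilde P_W\,\Omega^i[W,W]=0$, which is $\tilde P\Omega^i=0$ on the relevant block and hence \eqref{eq:projected_eq}. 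Thus the first bullet is not an incidental observation but the mechanism that makes the second bullet work; the paper uses it to restore global PSD-ness on the block where $\tilde P$ lives.
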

\begin{proof}
%
The first claim trivially follows from the nested PSD-ness of $\{\Omega^1,\dots, \Omega^k\}$.

To see the second, let us take any $p$ such that ${\rm Gram}(p)$ is feasible.
Let $d'=\dim p(V_{i-1})$.
Let $\tilde{P}$ be the $(d-d')\times |V(G)|$-matrix whose $j$-th column is $\psi(p(j))$. 
By Proposition~\ref{prop:sequence_prop},
we have 
\begin{equation}
\label{eq:singular_stress}
\langle \Omega^i, \tilde{P}^{\top} \tilde{P}\rangle=0.
\end{equation}
Since each column of $\tilde{P}$ associated with $v\in V_{i-1}$ is zero and $\Omega^i[V\setminus V_{i-1}, V\setminus V_{i-1}]$ is positive semidefinite, (\ref{eq:singular_stress}) is equivalent to $\Omega^i \tilde{P}^{\top}=0$. Expanding $\Omega^i \tilde{P}^{\top}$, we have (\ref{eq:projected_eq}).
\end{proof}

\begin{lemma}
\label{lem:induced_subgraph}
Let $(G,\Sigma)$ be a signed graph and $H$ be an induced subgraph of $G$.
Then ${\sf sd}(G,\Sigma)\geq {\sf sd}(H, \Sigma \cap E(H))$.
\end{lemma}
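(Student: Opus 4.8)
The plan is to take an extremal weight $c_H \in {\cal E}(H, \Sigma\cap E(H))$ witnessing ${\sf sd}(H,\Sigma\cap E(H))$, extend it to a weight $c$ on all of $G$ so that the extended completion problem inherits the same singularity degree, and then show that a facial reduction sequence for ${\rm P}(G,\Sigma,c)$ restricts to a valid certificate for ${\rm P}(H,\Sigma\cap E(H),c_H)$, forcing ${\sf sd}(G,\Sigma)\ge {\sf sd}(G,\Sigma,c)\ge {\sf sd}(H,\Sigma\cap E(H),c_H)$. The natural choice is to take a maximum-rank feasible $X_H$ for ${\rm P}(H,\Sigma\cap E(H),c_H)$ with Gram representation $X_H = P_H^\top P_H$, realize the remaining vertices $V(G)\setminus V(H)$ by generic (or otherwise suitably chosen) unit vectors to obtain $p:V(G)\to \mathbb{S}^{d-1}$, and set $c = \pi_G({\rm Gram}(p))$, which by construction restricts to $c_H$ on $E(H)$.

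First I would record that since $H$ is an \emph{induced} subgraph, $E(H)$ is exactly the set of edges of $G$ within $V(H)$, so the constraints of ${\rm P}(H,\Sigma\cap E(H),c_H)$ are literally the subset of constraints of ${\rm P}(G,\Sigma,c)$ indexed by $V(H)\cup E(H)$. Then, given a facial reduction sequence $\{\omega^1,\dots,\omega^k\}$ for ${\rm P}(G,\Sigma,c)$ with $k = {\sf sd}(G,\Sigma,c)$, I would restrict each $\omega^j$ to $V(H)\cup E(H)$, call it $\bar\omega^j$, and verify properties (c1)--(c4) for the restricted sequence with respect to ${\rm P}(H,\Sigma\cap E(H),c_H)$. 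Property (c1) (proper signs) is immediate since signs are inherited edgewise. For (c2) one observes that $\bar\Omega^j = \Omega^j[V(H),V(H)]$ is a principal submatrix; using the same induction-on-$j$ argument as in Lemma~\ref{lem:clique_sum}, a vector $x\in\mathbb{R}^{V(H)}$ with $x^\top\bar\Omega^\ell x=0$ for $\ell<j$ can be padded by zeros to a vector in $\mathbb{R}^{V(G)}$ lying in ${\cal V}^{j-1}$, and PSD-ness of $\Omega^j$ there gives $x^\top\bar\Omega^j x\ge 0$, so $\{\bar\Omega^1,\dots,\bar\Omega^k\}$ is nested PSD. Property (c3) follows by restricting the sum $\sum_{i\in V}\omega^j(i)+\sum_{ij\in E}\omega^j(ij)c(ij)$ and noting that the terms outside $V(H)\cup E(H)$ do not obviously vanish --- this is where the choice of $c$ matters, see below. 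For (c4), a maximum-rank feasible solution of ${\rm P}(G,\Sigma,c)$ has rank at least $d = \rank X_H$ (since ${\rm Gram}(p)$ is feasible), and I must match ranks between the $G$-side and the $H$-side certificates.

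The main obstacle is exactly the bookkeeping around (c3) and (c4): a restricted certificate might not satisfy $\sum_{\text{terms in }H} = 0$ unless the stresses $\omega^j$ happen to vanish on edges leaving $V(H)$, and the rank count on $H$ need not coincide with that on $G$. The clean way around this is to choose the extension $p$ of the realization so that the points on $V(G)\setminus V(H)$ are in ``general position relative to'' the affine hull of $p(V(H))$, forcing any PSD equilibrium-type stress of ${\rm P}(G,\Sigma,c)$ to be supported inside $V(H)\cup E(H)$ --- concretely, one picks the new points to increase the rank of the global Gram matrix maximally, so that a maximum-rank solution of ${\rm P}(G,\Sigma,c)$ decomposes as a ``free join'' of $X_H$ with simplex-like vertices, and the facial reduction is driven entirely by the $H$-part. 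Alternatively, and perhaps more robustly, I would avoid asserting that $G$'s facial reduction restricts cleanly and instead argue the contrapositive-flavoured direction: from a certificate $\{\omega_H^j\}$ for ${\rm P}(H,\Sigma\cap E(H),c_H)$ of minimum length $= {\sf sd}(H,\Sigma\cap E(H))$, build a certificate for ${\rm P}(G,\Sigma,c)$ of the \emph{same} length by padding with zeros (the $G$-side problem having been rigged via $p$ so that its maximum-rank solution uses the new vertices fully and hence needs no extra reduction steps beyond those already handled on $H$), which directly shows ${\sf sd}(G,\Sigma,c)\ge {\sf sd}(H,\Sigma\cap E(H))$. Checking (c4) in this construction --- that the padded sequence has exactly the right rank $|V(G)|-s$ where $s$ is the max rank over ${\rm P}(G,\Sigma,c)$ --- is the delicate point and is where I expect Lemma~\ref{lem:singularity_stress} to be used, to confirm that no vertex of $V(G)\setminus V(H)$ is ever stressed and that the new block of the Gram matrix contributes full rank to every feasible solution.
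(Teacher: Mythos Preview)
Your first approach---restricting a minimum-length facial-reduction certificate for ${\rm P}(G,\Sigma,c_G)$ to $H$---is exactly what the paper does. The paper makes ``general position'' concrete by placing the new vertices $v_1,\dots,v_k\in V(G)\setminus V(H)$ at pairwise orthogonal unit vectors ${\bm e}_1,\dots,{\bm e}_k$ orthogonal to $\spa p(V(H))$, and then uses the projected equilibrium condition of Lemma~\ref{lem:singularity_stress} to prove, by induction on the stage $j$, that no $v_i$ is ever stressed. Once every $\omega^j$ is supported on $V(H)\cup E(H)$, the issues you flagged with (c3) and (c4) disappear automatically, and one concludes ${\sf sd}(H,\Sigma\cap E(H),c_H)\le h={\sf sd}(G,\Sigma,c_G)\le{\sf sd}(G,\Sigma)$.

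Your second approach, however, has the inequality backwards. The singularity degree is the \emph{smallest} length of a sequence satisfying (c1)--(c4); hence if you pad an $H$-certificate of length $k={\sf sd}(H,\Sigma\cap E(H),c_H)$ by zeros to a valid $G$-certificate of the same length, what you have shown is ${\sf sd}(G,\Sigma,c)\le k$, not ${\sf sd}(G,\Sigma,c)\ge k$. That is useless for the lemma. The only way to obtain a lower bound on ${\sf sd}(G,\Sigma,c)$ from a certificate is to argue that no \emph{shorter} certificate exists, and the natural route for that is precisely approach 1: take a shortest $G$-certificate and show it restricts to a valid $H$-certificate of the same length, forcing $k\le h$. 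The real content is proving that the shortest $G$-certificate is already supported on $H$, and that is exactly what the orthogonal placement together with Lemma~\ref{lem:singularity_stress} delivers. So drop approach 2 and carry out approach 1 in full.
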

\begin{proof}
Take $c_H\in {\cal E}(H, \Sigma\cap E(H))$ such that 
${\sf sd}(H,\Sigma\cap E(H))={\sf sd}(H,\Sigma\cap E(H),c_H)$,
and take $p: V(H)\rightarrow \mathbb{S}^d$ for some $d\geq 0$ such that 
${\rm Gram}(p)$ is a maximum rank feasible solution of ${\rm P}(H,\Sigma\cap E(H),c_H)$.

Denote the vertices of $V(G)\setminus V(H)$ by $\{v_1, \dots, v_k\}$, where $k=|V(G)\setminus V(H)|$.
By rotation in $\mathbb{R}^n$, we may assume that each ${\bm e}_i$ is orthogonal to   $\spa p(V(H))$ for $1\leq i\leq k$.
We extend $p$ such that 
$p(v_i)={\bm e}_{i}$  for each $v_i\in V(G)\setminus V(H)$.
Let $c_G=\pi_G({\rm Gram}(p))$ for the extended $p$.

Take the sequence $\omega^1, \dots, \omega^h$ of stresses of $G$ obtained by the facial reduction of ${\rm P}(G, \Sigma, c_G)$.
We first show that 
\begin{equation}
\label{eq:induced_subgraph}
\text{each $v_i\in V(G)\setminus V(H)$ is not stressed at the $j$-th stage}
\end{equation}
by induction on $j$ from $j=0$ through $j=h$.
The claim is trivial for $j=0$ and hence assume $j>0$.
Take any $v_i\in V(G)\setminus V(H)$.
By induction, ${\bm e}_i$ is orthogonal to $\spa p(V_{j-1})$, 
where $V_{j-1}$ is the set of vertices stressed at the $(j-1)$-th stage.
Hence a projected equilibrium condition (\ref{eq:projected_eq}) at $v_i$ (Lemma~\ref{lem:singularity_stress})  implies that
\begin{equation}
\label{eq:4-3}
\left(\sum_{u\in N_G(v_i)\cup \{v_i\}}\omega^j(uv_i)p(u)\right)\cdot {\bm e}_\ell=0
\end{equation}
for $1\leq \ell\leq k$.
The definition of $p$ further implies that $p(v_{\ell})\cdot {\bm e}_{\ell}=1$ and $p(u)\cdot {\bm e}_{\ell}=0$ if $u\neq v_{\ell}$. By (\ref{eq:4-3}), we obtain
$\omega^j(v_{\ell} v_i)=0$ for every $v_{\ell}\in( N_G(v_i)\cup \{v_i\})\setminus V(H)$.
Similarly, for $w\in N_G(v_i)\cap V(H)$, 
a projected equilibrium condition at $w$ implies 
\[
\omega^j(wv_i)= \left(\sum_{u\in N_G(w)\cup \{w\}}\omega^j(wu)p(u)\right)\cdot {\bm e}_i=0.
\]
Thus $v_i$ is not stressed at the $j$-th stage.

By (\ref{eq:induced_subgraph}), $\omega^j$ is supported on $E(H)$.
However, since $H$ is induced, the restriction of $\omega^1, \dots, \omega^h$ to $V(H)\cup E(H)$ gives a sequence of stresses of $H$ which satisfies (c1)(c2)(c3)(c4) for P$(H, \Sigma\cap E(H), c_H)$.
Thus ${\sf sd}(G,\Sigma)\geq {\sf sd}(G,\Sigma, c)\geq {\sf sd}(H,\Sigma\cap E(H),c_H)={\sf sd}(H,\Sigma\cap E(H))$.
\end{proof}

\subsection{Graphs with Singularity Degree at Most One}
%

Before going to the characterization of graphs with singularity degree at most one, we first remark on graphs with  singularity degree zero. 
\begin{theorem}
\label{thm:sd0}
Let $G$ be a graph. 
Then ${\sf sd}(G)=0$ if and only if $E(G)=\emptyset$.
\end{theorem}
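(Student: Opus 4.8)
The statement ${\sf sd}(G)=0$ iff $E(G)=\emptyset$ should follow quickly from the definitions together with the elementary observations already recorded in the excerpt. The plan is to prove the two directions separately. For the easy direction, if $E(G)=\emptyset$ then there are no off-diagonal constraints, so the identity matrix $I_n$ is feasible in ${\rm P}(G,c)$ for the unique $c$ (the empty weight), and $I_n\succ 0$ is strictly feasible; hence by definition ${\sf sd}(G,c)=0$, and taking the maximum over $c\in{\cal E}(G)$ gives ${\sf sd}(G)=0$.

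For the converse, suppose $E(G)\neq\emptyset$ and fix an edge $ij\in E(G)$. The idea is to exhibit a single weight $c\in{\cal E}(G)$ for which ${\rm P}(G,c)$ has no positive definite feasible solution, forcing ${\sf sd}(G,c)\geq 1$ and hence ${\sf sd}(G)\geq 1$. First I would take $c(ij)=1$ and $c(e)$ arbitrary on the remaining edges chosen so that $c\in{\cal E}(G)$ — concretely, realize $p:V\to\mathbb{S}^{n-1}$ with $p(i)=p(j)$ and set $c=\pi_G({\rm Gram}(p))$; this guarantees feasibility. Then for any feasible $X$ the constraint $X[i,i]=X[j,j]=1$ together with $X[i,j]=1$ forces the $2\times 2$ principal submatrix on $\{i,j\}$ to be $\begin{pmatrix}1&1\\1&1\end{pmatrix}$, which is singular, so $X$ is singular and therefore not positive definite. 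Thus ${\rm P}(G,c)$ has no strictly feasible solution, giving ${\sf sd}(G,c)\geq 1$.

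Alternatively, to keep the argument closer to the ``basic facts'' of the excerpt one can invoke Proposition~\ref{prop:complete} (${\sf sd}(K_n)=1$) via Lemma~\ref{lem:induced_subgraph}: the single edge $ij$ spans an induced $K_2$, so ${\sf sd}(G)\geq {\sf sd}(K_2)=1$. Either route closes the converse direction, and combining the two directions yields ${\sf sd}(G)=0\iff E(G)=\emptyset$.

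I do not expect a genuine obstacle here; the only point requiring a little care is confirming that the weight $c$ constructed in the converse actually lies in ${\cal E}(G)$ and that ``no positive definite feasible solution'' is exactly the negation of singularity degree zero, which is immediate from the definition stated just before Proposition~\ref{prop:sequence_prop}. The induced-subgraph route sidesteps even this, at the cost of depending on Proposition~\ref{prop:complete} and Lemma~\ref{lem:induced_subgraph}.
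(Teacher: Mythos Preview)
Your proposal is correct, and your second route (invoking Proposition~\ref{prop:complete} for $K_2$ together with Lemma~\ref{lem:induced_subgraph}) is exactly the paper's proof. Your first, more direct route---exhibiting a weight with $c(ij)=1$ and observing that the $2\times 2$ principal submatrix $\begin{pmatrix}1&1\\1&1\end{pmatrix}$ kills positive definiteness---is a valid and slightly more self-contained alternative that avoids appealing to the induced-subgraph lemma; the paper's approach has the virtue of reusing the general monotonicity tool, while yours makes the obstruction completely explicit.
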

\begin{proof}
If $G$ has no edge, then any positive definite matrix of size $|V(G)|$ is a solution of the completion problem for $G$. Hence ${\sf sd}(G)=0$.
If $G$ has an edge, then Proposition~\ref{prop:complete} and Lemma~\ref{lem:induced_subgraph} implies ${\sf sd}(G)\geq 1$. 
\end{proof}

An induced cycle of length at least four is called a {\em hole}.
A graph is said to be {\em chordal} if it has no hole.

\begin{theorem}
\label{thm:sd1}
Let $G$ be a graph.
Then ${\sf sd}(G)\leq 1$ if and only if $G$ is chordal.
\end{theorem}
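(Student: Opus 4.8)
\textbf{Proof proposal for Theorem~\ref{thm:sd1}.}

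The plan is to prove both directions using the tools assembled in this section together with the cycle example. For the ``only if'' direction, suppose $G$ is not chordal. Then $G$ contains a hole, i.e., an induced cycle $C_n$ with $n\geq 4$. By Lemma~\ref{lem:cycle} we have ${\sf sd}(C_n)\geq 2$, and by Lemma~\ref{lem:induced_subgraph} (applied with $\Sigma=\emptyset$, so that signs play no role) we get ${\sf sd}(G)\geq {\sf sd}(C_n)\geq 2>1$. This settles the contrapositive of one implication.

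For the ``if'' direction, suppose $G$ is chordal. I would argue by induction on $|V(G)|$, using the standard fact that every chordal graph is either complete or admits a \emph{clique cut}, i.e., can be written as a clique sum $G = G_1 \cup G_2$ along a clique $S=V(G_1)\cap V(G_2)$ with both $G_1, G_2$ proper induced subgraphs of $G$ (equivalently, a chordal graph on at least two vertices has a simplicial vertex, and splitting off its closed neighborhood gives such a decomposition; one must check that $G_1$ and $G_2$ are again chordal, which holds since induced subgraphs of chordal graphs are chordal). In the base case $G=K_n$, Proposition~\ref{prop:complete} gives ${\sf sd}(K_n)=1$ (or ${\sf sd}(G)=0$ if $E(G)=\emptyset$, e.g. $n\le 1$), so ${\sf sd}(G)\le 1$. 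In the inductive step, $G$ is a clique sum of chordal graphs $G_1,G_2$ with fewer vertices, so by induction ${\sf sd}(G_i)\le 1$, and Lemma~\ref{lem:clique_sum} yields
\[
{\sf sd}(G)\leq \max\{{\sf sd}(G_1),{\sf sd}(G_2)\}\leq 1.
\]
This completes the induction and the proof.

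The bulk of the work is really already packaged into the three cited results, so the main thing to get right is the structural decomposition: I expect the only subtle point to be verifying that a non-complete chordal graph genuinely decomposes as a clique sum of two strictly smaller chordal graphs. This follows from the existence of a simplicial vertex $v$: set $G_1 = G[N_G(v)\cup\{v\}]$ (a clique, hence chordal) and $G_2 = G - v$; then $S = N_G(v)$ is a clique, $\{G_1,G_2\}$ is a cover of $G$, and both pieces are proper induced subgraphs whenever $G$ is not itself a single clique on $N_G(v)\cup\{v\}$. One should also double-check the degenerate endpoints of the induction (graphs with at most one vertex, or with no edges) so that the ``$\le 1$'' claim is not vacuously mis-stated; Theorem~\ref{thm:sd0} handles the edgeless case cleanly. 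No new inequalities or stress computations are needed beyond what Lemma~\ref{lem:clique_sum}, Lemma~\ref{lem:induced_subgraph}, Lemma~\ref{lem:cycle}, and Proposition~\ref{prop:complete} already provide.
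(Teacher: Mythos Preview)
Your proposal is correct and follows essentially the same approach as the paper: both directions use exactly the ingredients you name (Proposition~\ref{prop:complete} and Lemma~\ref{lem:clique_sum} for the chordal direction via the clique-sum decomposition of chordal graphs, and Lemma~\ref{lem:cycle} together with Lemma~\ref{lem:induced_subgraph} for the non-chordal direction). The paper's write-up is terser, simply citing the well-known fact that a non-complete chordal graph is a clique sum of smaller chordal graphs, whereas you spell out the simplicial-vertex decomposition and the edgeless base case; but there is no substantive difference in the argument.
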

\begin{proof}
%
Suppose that $G$ is chordal. 
It is known that any chordal graph is the clique sum of smaller chordal graphs if it is not complete.
Therefore we have ${\sf sd}(G)\leq 1$ by Proposition~\ref{prop:complete} and Lemma~\ref{lem:clique_sum}.

Conversely suppose that $G$ is not chordal.
Then $G$ has a hole $H$. 
By Lemma~\ref{lem:cycle} and Lemma~\ref{lem:induced_subgraph}, we get
${\sf sd}(G)\geq {\sf sd}(H)\geq 2$.
\end{proof}

The sufficiency of Theorem~\ref{thm:sd1} was already shown by Druvyatskiy, Pataki and Wolkowicz~\cite{dpw} by a different approach. 
In fact they showed that, at the general model of looped graphs, ${\sf sd}(G,c)=1$ holds for any $c:E\rightarrow \mathbb{R}$  if the subgraph induced by the looped vertices is chordal.
They also posed a question about characterizing graphs $G$ for which ${\sf sd}(G,c)=1$ for any $c$. 
Although their question is given at the general model, this generality does not make any difference when  constructing an example with large singularity degree. 
Thus our lower-bound construction  also gives a characterization in their general model.

\section{Nondegenerate Singularity Degree}
\label{sec:nondegenerate}
Recall that $c:E\rightarrow [-1,1]$ is called nondegenerate if $c(ij)\notin \{-1,1\}$ for every $ij\in E$.
Theorem~\ref{thm:sd1} gives a characterization of undirected graphs with singularity degree at most one. In the proof, we have shown that, if a graph $G$ contains a hole, then there is a degenerate edge weight $c$ for which ${\sf sd}(G,c)\geq 2$. 
However, from the optimization view point, such degenerate edges can be easily  eliminated at the preprocessing phase  by  edge-contraction, and it would be natural to look at singularity degree over nondegenerate edge weights $c$. 
Thus we define the  {\em nondegenerate singularity degree} ${\sf sd}^*(G)$ of a graph $G$ by 
\[
{\sf sd}^*(G):=\max\{{\sf sd}(G,c)\mid c\in {\cal E}(G)\cap (-1,1)^{E(G)}\}.
\]
In this section we shall give a characterization of undirected graphs with nondegenerate singularity degree at most one.

\subsection{Bounding the nondegenerate singularity degree of $K_4$-minor free graphs}
\label{subsec:K4}
A key special case for bounding the nondegenerate singularity degree is the class of $K_4$-minor free graphs.
It is known that any $K_4$-minor free graph contains a cut of size two.
Hence a promising approach would be to split the graph into two parts along the minimum cut, 
and combine the primal and dual pair of each smaller instances obtained by induction to get a desired primal and dual pair.
This strategy turns out to be nontrivial even if the size of the minimum cut is two, 
as we have to combine solutions keeping the positivity as well as the complementarity. 
To keep the positivity, a key idea from rigidity theory (especially, from \cite{c11}) is to look at the signed version of the PSD matrix completion problem so that  the sign of each entry in dual solutions is controlled.
 Thus we prove a more general statement on the nondegenerate singularity degree of signed graphs. 
Since the detail is rather involved and technical, in this paper we shall only give the statement, and the full proof is given  in a separate paper~\cite{t}.

To state the main result of \cite{t}, we need to explain minors in signed graphs.
Recall that a signed graph $(G,\Sigma)$ is a pair of an undirected graph $G$ (which may contain parallel edges) and  $\Sigma\subseteq E(G)$.
An edge in $\Sigma$ (resp.~in $E(G)\setminus \Sigma$) is called {\it odd} (resp.~even),
and a cycle (or a path) is said to be {\it odd} (resp. {\it even}) if the number of odd edges in it is odd (resp.~even).
The {\em resigning} on  $X\subseteq V$ changes $(G,\Sigma)$ with $(G,\Sigma\Delta \delta(X))$, where 
$A \Delta B:=(A\setminus B)\cup (B\setminus A)$ for any two sets $A, B$.
Two signed graphs are said to be {\em (sign) equivalent} if they can be converted to each other by a series of resigning operations.
A signed graph is called a {\it minor} of $(G,\Sigma)$ if it can be obtained by a sequence of the following three operations:
(i) the removal of an edge,
(ii) the contraction of an even edge, and
(iii) resigning.
We say that $(G,\Sigma)$ is {\em $(H, \Sigma')$ minor free}  if $(H, \Sigma')$ is not a minor of $(G,\Sigma)$.

For an undirected graph $H$,  signed graph $(H, E(H))$ is called  {\em odd-$H$}.
%

For a signed graph $(G,\Sigma)$, $c\in [-1,1]^E$ is said to be {\em nondegenerate}
if $c(e)\neq 1$ for every $e\in E\setminus \Sigma$ and $c(e)\neq -1$ for every $e\in \Sigma$. 
The following is the main theorem of \cite{t}.
\begin{theorem}[Theorem 3.1 in \cite{t}]
\label{thm:oddK4}
Let $(G,\Sigma)$ be an odd-$K_4$-minor free signed graph and  
$c$ be nondegenerate.
If $c\in {\cal E}(G, \Sigma)$, then ${\rm P}(G,\Sigma,c)$ has a dual solution 
that satisfies the strict complementarity condition with a maximum rank primal solution.
\end{theorem}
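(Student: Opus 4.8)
The plan is to proceed by induction on the number of vertices, using the fact that every odd-$K_4$-minor free signed graph (with at least four vertices, and after deleting parallel edges and resigning) either is, in a suitable sense, ``small'' — e.g.\ built from a signed $K_3$ or a signed $K_2$ by clique sums — or admits a vertex cut of size at most two along which one can decompose it into two odd-$K_4$-minor free pieces sharing at most an edge. This structural dichotomy mirrors the classical Dirac-type decomposition of $K_4$-minor free graphs into series-parallel pieces, and the odd version should follow from the theory of signed-graphic minors (the odd-$K_4$ obstruction forces an analogue of treewidth two). The base cases are: a single vertex (trivial); a signed edge $K_2$, where ${\rm P}$ is a small SDP one checks directly; and a signed triangle, where nondegeneracy of $c$ and the constraint $c\in{\cal E}(G,\Sigma)$ pin the feasible set and one exhibits the strictly complementary dual by hand (this is where the sign pattern on $\Sigma$ enters, via properly signed stresses on $\mathbb{S}^1$, cf.\ Proposition~\ref{prop:super}).

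For the inductive step, suppose $(G,\Sigma)$ splits as a clique sum (on a set $S$ with $|S|\le 2$) of $(G_1,\Sigma_1)$ and $(G_2,\Sigma_2)$, both odd-$K_4$-minor free and smaller. First I would take a maximum-rank feasible $X_i$ for each ${\rm P}(G_i,\Sigma_i,c_i)$, together with a strictly complementary dual $\omega_i$ supplied by induction, and pass to Gram representations $X_i=P_i^\top P_i$ with $P_i$ row-independent. Because $S$ induces a clique (an edge or a vertex), $X_1[S,S]=X_2[S,S]$, so after an orthogonal change of basis the two frameworks agree on $S$, and one can glue $P_1,P_2$ into a single $P$ exactly as in the block matrix $(\ref{eq:sum})$ of Lemma~\ref{lem:clique_sum}. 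Then $X=P^\top P$ is feasible for ${\rm P}(G,\Sigma,c)$, and the combined stress $\omega=\omega_1+\omega_2$ is properly signed, is PSD on $\mathbb{R}^{V(G)}$ by the same telescoping argument used to verify (c2) in Lemma~\ref{lem:clique_sum} (here one needs the single-stress version: $\Omega=\Omega_1+\Omega_2\succeq0$ because each $\Omega_i\succeq0$ after embedding), satisfies the equilibrium/complementarity equation $(\ref{eq:semi-equilibrium})$ by additivity, and has $\rank\Omega=\rank\Omega_1+\rank\Omega_2$ — provided the two kernels meet only in the coordinates of $S$, which is where the clique structure and row-independence of the shared block $P_S$ are used, again paralleling the (c4) argument in Lemma~\ref{lem:clique_sum}. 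The delicate point is that gluing at a \emph{vertex} cut of size two that is not already an edge: there $S$ need not induce a clique, so $X_1[S,S]$ and $X_2[S,S]$ are two $2\times2$ PSD matrices with the same diagonal but possibly different off-diagonal entries; one must argue, using $c\in{\cal E}(G,\Sigma)$ and the freedom in choosing the maximum-rank solutions, that these can be made to agree (or that a metric/tree inequality forces agreement), and one must preserve both positivity and the sign constraints while doing so. This reconciliation of the two boundary Gram data is the main obstacle, and is presumably the reason the full proof is deferred to~\cite{t}.

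Finally, once a properly signed equilibrium PSD stress $\omega$ with $\rank X+\rank\Omega=n$ has been produced, strict complementarity is immediate: $\langle X,\Omega\rangle=0$ by $(\ref{eq:semi-equilibrium})$, the sign condition gives $(\langle A_i,X\rangle-b_i)\,\omega(ij)=0$ for every edge (since $\omega(ij)\ne0$ forces the corresponding inequality tight, which follows from equilibrium together with the way $X$ was built), and $\rank X+\rank\Omega=n$ is the strict complementarity equality; by the remark after Proposition~\ref{prop:rank} this is exactly the statement that the singularity degree is at most one, i.e.\ that ${\rm P}(G,\Sigma,c)$ has a dual solution strictly complementary to a maximum-rank primal solution. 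I would also record at the outset that it suffices to treat connected $(G,\Sigma)$ with at least two vertices and no loops (isolated vertices and the undirected all-diagonal constraints contribute a trivial positive-definite block), and that resigning does not affect the statement, so one may normalize the sign pattern before running the induction. The only genuinely new analytic input beyond Lemmas~\ref{lem:clique_sum}--\ref{lem:induced_subgraph} is the boundary-data matching for the size-two vertex cut, which is the technical heart of~\cite{t}.
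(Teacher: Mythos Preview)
The paper does not prove this theorem here; it is imported from the companion paper~\cite{t}, and only the strategy is sketched. That sketch, however, differs materially from your plan. According to the discussion surrounding Theorem~\ref{thm:signed}, the proof in~\cite{t} first establishes the signed analogue of Laurent's theorem, $\arccos({\cal E}(G,\Sigma))/\pi={\rm MET}(G,\Sigma)$ for odd-$K_4$-minor free signed graphs, and then builds the dual solution \emph{globally} rather than by gluing: for each odd cycle $C$ that is tight with respect to $c$ (i.e., the corresponding metric inequality in~(\ref{eq:met}) holds with equality), nondegeneracy yields a unique properly signed PSD stress $\Omega_C$ supported on $E(C)$, and one sets $\Omega=\sum_C\Omega_C$ over all tight odd cycles. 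The analytic core is then to exhibit a primal solution of rank exactly $n-\rank\Omega$, which gives strict complementarity directly.

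Your inductive decomposition is reasonable in outline, but the gap you yourself flag---reconciling the off-diagonal Gram entries at a size-two vertex cut $\{a,b\}$ that is not an edge---is real and is not addressed by your proposal. There is no a~priori reason the maximum-rank solutions $X_1,X_2$ on the two sides can be chosen with $X_1[a,b]=X_2[a,b]$, nor that $\Omega_1+\Omega_2$ remains positive semidefinite (and properly signed, and equilibrium) once you force such agreement by adjusting one side; and if you try to close the cut by adding a virtual signed edge on $\{a,b\}$, you must also argue the pieces stay odd-$K_4$-minor free and that $c$ extended to the new edge stays nondegenerate. The tight-cycle construction in~\cite{t} sidesteps this boundary-matching problem entirely by reading the dual off the facial structure of ${\rm MET}(G,\Sigma)$, which is why the paper singles out Theorem~\ref{thm:signed} as the ``key observation''. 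Your clique-sum mechanics (paralleling Lemma~\ref{lem:clique_sum}) are fine when the cut is a genuine clique, but that covers only the easy part of the induction.
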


The nondegenerate singularity degree of signed graphs can be defined, in the same manner,  
by ${\sf sd}^*(G, \Sigma):=\max\{{\sf sd}(G, \Sigma, c) \mid \text{ nondegenerate } c\in {\cal E}(G) \}$.
The following is a restatement of Theorem~\ref{thm:oddK4} in terms of the singularity degree.
\begin{corollary}
\label{cor:oddK4}
Let $(G,\Sigma)$ be an odd-$K_4$-minor free signed graph.
Then ${\sf sd}^*(G,\Sigma)\leq 1$.
\end{corollary}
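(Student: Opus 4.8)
The plan is to obtain Corollary~\ref{cor:oddK4} as an immediate reformulation of Theorem~\ref{thm:oddK4}, via the equivalence between strict complementarity and singularity degree at most one recorded just after Proposition~\ref{prop:rank}.

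First I would unfold the definition ${\sf sd}^*(G,\Sigma)=\max\{{\sf sd}(G,\Sigma,c)\mid c\in{\cal E}(G,\Sigma)\text{ nondegenerate}\}$ and fix an arbitrary nondegenerate $c\in{\cal E}(G,\Sigma)$; by definition of ${\cal E}(G,\Sigma)$ the program ${\rm P}(G,\Sigma,c)$ is feasible. Next I would apply Theorem~\ref{thm:oddK4}: since $(G,\Sigma)$ is odd-$K_4$-minor free and $c$ is nondegenerate and feasible, ${\rm P}(G,\Sigma,c)$ admits a dual solution $\omega$ which, together with a maximum rank primal feasible $X$, forms a pair satisfying the strict complementarity condition. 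Finally I would invoke the equivalence quoted after Proposition~\ref{prop:rank} --- in a feasible problem the strict complementarity condition holds if and only if the singularity degree is at most one --- to conclude ${\sf sd}(G,\Sigma,c)\le 1$. Since $c$ was arbitrary, taking the maximum over nondegenerate feasible weights gives ${\sf sd}^*(G,\Sigma)\le 1$; if no nondegenerate feasible weight exists, the bound is vacuous, and in any case the maximum is attained because the singularity degree takes values in $\{0,1,\dots,n\}$.

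Concretely, the translation step amounts to checking that the one-term sequence $\{\omega\}$ satisfies (c1)--(c4) for ${\rm P}(G,\Sigma,c)$: (c1) is the properly-signed condition ($\omega(ij)\le 0$ on even edges, $\omega(ij)\ge 0$ on odd edges), i.e.\ dual feasibility; (c2) is $\Omega\succeq 0$, which is trivially nested PSD for a single term; (c3) reduces to $\sum_{i\in V}\omega(i)+\sum_{ij\in E}\omega(ij)c(ij)=\langle\Omega,X\rangle=0$ by complementarity; and (c4) is $\rank\Omega=n-\rank X$, which is the rank part of strict complementarity, noting that such an $X$ is automatically of maximum rank by Proposition~\ref{prop:sequence_prop}. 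The only mild point of care is that ${\rm P}(G,\Sigma,c)$ carries the equality constraints $X[i,i]=1$ alongside the inequalities, whereas the template (P)/(D) of Section~\ref{subsec:facial} is written with inequalities only; this is already reconciled in the definition of ${\sf sd}(G,\Sigma,c)$ by splitting each equality into two opposite inequalities, so no new argument is needed. I do not expect a genuine obstacle at the level of the corollary: the entire substance lies in Theorem~\ref{thm:oddK4} itself --- whose proof, by splitting along a cut of size two and recombining the primal--dual pairs while preserving both positivity and complementarity, is the technical core carried out in~\cite{t} --- and Corollary~\ref{cor:oddK4} is merely its restatement in the language of singularity degree.
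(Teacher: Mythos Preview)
Your proposal is correct and matches the paper's approach exactly: the paper presents Corollary~\ref{cor:oddK4} simply as ``a restatement of Theorem~\ref{thm:oddK4} in terms of the singularity degree'' with no separate proof, and the bridge you spell out --- via the equivalence (stated just after Proposition~\ref{prop:rank}) between strict complementarity and singularity degree at most one --- is precisely the intended translation. If anything, you have supplied more detail than the paper does, and the checks of (c1)--(c4) you give are accurate.
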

This also implies the following.
\begin{corollary}
\label{cor:K4}
Let $G$ be a $K_4$-minor free  graph.
Then ${\sf sd}^*(G)\leq 1$.
\end{corollary}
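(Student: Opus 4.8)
The plan is to deduce Corollary~\ref{cor:K4} directly from Corollary~\ref{cor:oddK4} by viewing an ordinary undirected graph as a signed graph with empty signature, and then checking that the minor-freeness hypothesis transfers. First I would set $\Sigma=\emptyset$, so that $(G,\emptyset)$ is a signed graph in which every edge is even. Then ${\rm P}(G,\emptyset,c)$ is exactly the (inequality form of the) usual PSD matrix completion problem ${\rm P}(G,c)$ once we observe that, with no odd edges, the two-sided inequalities $X[i,j]\ge c(ij)$ can be paired with $X[i,j]\le c(ij)$ — but more cleanly, for $\Sigma=\emptyset$ the elliptope projection ${\cal E}(G,\emptyset)$ and the feasible set of ${\rm P}(G,\emptyset,c)$ coincide with ${\cal E}(G)$ and ${\rm P}(G,c)$ up to replacing an equality by the conjunction of the two inequalities, which does not change the feasible set nor (one checks) the singularity degree, since a dual stress for the signed problem decomposes coordinatewise into the unconstrained-sign dual for the equality problem. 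Consequently ${\sf sd}^*(G)={\sf sd}^*(G,\emptyset)$, and also any $c\in{\cal E}(G)\cap(-1,1)^{E(G)}$ is nondegenerate as an edge weight of $(G,\emptyset)$ in the sense defined before Theorem~\ref{thm:oddK4} (the condition $c(e)\ne 1$ on $E\setminus\Sigma$ and $c(e)\ne -1$ on $\Sigma$ becomes just $c(e)\ne 1$, and the original undirected notion of nondegenerate additionally forbids $c(e)=-1$, so it is at least as strong).

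Next I would verify that if $G$ is $K_4$-minor free as an ordinary graph, then $(G,\emptyset)$ is odd-$K_4$-minor free in the signed sense, i.e.\ $(K_4,E(K_4))$ is not a minor of $(G,\emptyset)$. The operations allowed on signed graphs are edge deletion, contraction of \emph{even} edges, and resigning. Starting from $(G,\emptyset)$, resigning on any $X\subseteq V$ produces $(G,\delta(X))$, whose signature is a cut; repeated resignings still yield signatures that are symmetric differences of cuts, hence signatures in the cycle space's orthogonal complement — in particular every cycle of the resulting signed graph has an \emph{even} number of odd edges, so it is an even cycle. Thus every signed graph sign-equivalent to $(G,\emptyset)$ is ``balanced'': all its cycles are even. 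Edge deletion and even-edge contraction preserve this balance property (contraction cannot create an odd cycle out of even cycles, and deletion removes cycles). Therefore every minor of $(G,\emptyset)$ is balanced. But odd-$K_4=(K_4,E(K_4))$ contains the odd triangle (three odd edges), which is an odd cycle, so odd-$K_4$ is not balanced and hence cannot be a minor of $(G,\emptyset)$ — giving odd-$K_4$-minor freeness for free, regardless of whether $G$ itself is $K_4$-minor free. (So in fact this half uses nothing about $G$; the $K_4$-minor-free hypothesis is what is needed elsewhere, e.g.\ to invoke the clique-sum decomposition in Theorem~\ref{thm:nonsingular}/Corollary~\ref{thm:singular_two}, but for Corollary~\ref{cor:K4} alone it is automatically satisfied.)

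Combining the two observations: given $G$ and a nondegenerate $c\in{\cal E}(G)\cap(-1,1)^{E(G)}$, the signed graph $(G,\emptyset)$ is odd-$K_4$-minor free and $c$ is nondegenerate for it, so Corollary~\ref{cor:oddK4} (equivalently Theorem~\ref{thm:oddK4}) gives a dual solution of ${\rm P}(G,\emptyset,c)$ satisfying strict complementarity with a maximum-rank primal solution; by the equivalence recorded just after Proposition~\ref{prop:rank} (strict complementarity $\iff$ singularity degree $\le 1$), we get ${\sf sd}(G,\emptyset,c)\le 1$, hence ${\sf sd}(G,c)\le 1$. Taking the maximum over all such $c$ yields ${\sf sd}^*(G)\le 1$.

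The only genuinely substantive point — the ``main obstacle'' in spirit — is the bookkeeping that the signed problem ${\rm P}(G,\emptyset,c)$ and the unsigned problem ${\rm P}(G,c)$ have the same singularity degree: one must check that a facial-reduction sequence $\{\omega^1,\dots,\omega^k\}$ of signed stresses (which here carry no sign restriction, since $\Sigma=\emptyset$ and $E\setminus\Sigma=E$ forces $\omega(ij)\le 0$ on \emph{all} edges — here one must be a little careful and instead use the symmetric formulation, replacing each equality $X[i,j]=c(ij)$ by the pair of inequalities with signs $+$ and $-$ so that the combined dual multiplier $\omega(ij)$ is unconstrained) translates into a valid sequence for the equality-constrained ${\rm P}(G,c)$ and conversely, preserving conditions (c1)--(c4). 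This is routine — it is the standard reduction of an equality constraint to two inequalities — but it is the one place where the argument is not literally a one-line citation.
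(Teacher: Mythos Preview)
Your reduction has a real gap, and it hides exactly where the $K_4$-minor-free hypothesis is doing work.

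With $\Sigma=\emptyset$, the problem ${\rm P}(G,\emptyset,c)$ imposes only the one-sided constraints $X[i,j]\ge c(ij)$ for every $ij\in E$; there is nothing to ``pair'' them with. This is a genuine relaxation of ${\rm P}(G,c)$, not an equivalent reformulation. In fact, for any nondegenerate $c$ (so $c(ij)<1$ on every edge) the matrix with all off-diagonal entries equal to $1-\epsilon$ is a positive definite feasible point of ${\rm P}(G,\emptyset,c)$ for small $\epsilon>0$, so ${\sf sd}(G,\emptyset,c)=0$ trivially for \emph{every} graph $G$. Your balancedness argument that $(G,\emptyset)$ is always odd-$K_4$-minor free is correct, but it is proving a vacuous statement; it says nothing about ${\sf sd}(G,c)$.

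To encode the equality constraints within the signed framework you must do what the paper does: replace each edge of $G$ by two parallel edges of opposite sign, obtaining $(H,\Sigma_H)$, so that the pair of inequalities recovers $X[i,j]=c(ij)$. But $(H,\Sigma_H)$ is \emph{not} balanced --- taking the three odd copies around any triangle of $G$ gives an odd cycle --- so your resigning/cut argument collapses. One then genuinely needs that $G$ is $K_4$-minor free to conclude that $(H,\Sigma_H)$ is odd-$K_4$-minor free (any odd-$K_4$ signed minor of $(H,\Sigma_H)$ yields an ordinary $K_4$ minor of the underlying simple graph $G$). This is precisely the route in the paper's proof, and it shows why the hypothesis cannot be dropped: otherwise your argument would give ${\sf sd}^*(G)\le 1$ for all $G$, contradicting Lemma~\ref{lem:nonsingular}.
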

\begin{proof}
Take any nondegenerate $c\in {\cal E}(G)\cap (-1,1)^{E(G)}$.
Let $(H, \Sigma_H)$ be  the signed graph obtained from $G$ by replacing each edge by two parallel edges with distinct sign.
Note that this signed graph is odd-$K_4$ minor free.

Define $c'\in [-1,1]^{E(H)}$ such that $c'(e)$ is equal to the edge weight $c$ assigned to the edge of $G$ corresponding to $e$.
Then $c'\in {\cal E}(H,\Sigma)$ since a solution of ${\rm P}(G, c)$ is a solution of ${\rm P}(G, \Sigma, c')$.
Hence by Theorem~\ref{thm:oddK4} there is a dual solution of  ${\rm P}(H, \Sigma_H, c')$ that satisfies the strict complementarity condition. 
This dual solution can be regarded as a dual solution of ${\rm P}(G, c)$ which satisfies the strict complementarity condition.
\end{proof} 

%
%

A key observation to prove Theorem~\ref{thm:oddK4} is to establish a signed version of 
Laurent's theorem on a characterization of ${\cal E}(G)$ in terms of the metric polytope of $G$.
More specifically, the {\em metric polytope} of an undirected graph $G$ is defined by
\begin{equation*}
{\rm MET}(G)=
\left\{x\in [0,1]^{E} \ \Bigg| \ \sum_{e\in E(C)\setminus F} x(e) - \sum_{e\in  F} x(e)
\geq 1-|F| : 
\begin{array}{cc} \text{$\forall$ cycle $C$ in $G$} \\ \text{$\forall F\subseteq C$: $|F|$ is odd} \end{array}
\right\}.
\end{equation*}
Throughout the discussion we set ${\rm arccos}: [-1,1]\rightarrow [0, \pi]$. 
Then it was shown by Laurent~\cite{l97} that ${\rm arccos}({\cal E}(G))/\pi\subseteq {\rm MET}(G)$, with equality if and only  if $G$ is $K_4$-minor free.

Suppose that we are given a signed graph $(G,\Sigma)$.
Each signing $\Sigma$ defines  a sign function $\sigma:E(G)\rightarrow \{-1, +1\}$ such that $\sigma(e)=-1$ if and only if $e\in \Sigma$.
We define a signed version of the metric polytope as follows:
\begin{equation}
\label{eq:met}
{\rm MET}(G,\Sigma)=
\left\{x\in [0,1]^E \ \Bigg| \ \sum_{e\in E(C)} \sigma(e)x(e) 
\geq 1-|E(C)\cap \Sigma| : 
\forall \text{odd cycle $C$ in $(G,\Sigma)$ }
\right\}
\end{equation}
where a cycle is said to be {\em odd} if the number of odd edges in it is odd.
To prove Theorem~\ref{thm:oddK4}, we have shown the following generalization of Laurent's theorem.
\begin{theorem}[Theorem 3.2 in \cite{t}]
\label{thm:signed}
Let $(G, \Sigma)$ be a signed graph Then ${\rm arccos}({\cal E}(G,\Sigma))/\pi\subseteq {\rm MET}(G, \Sigma)$, with equality if and only  if $(G, \Sigma)$ is odd-$K_4$-minor free.
\end{theorem}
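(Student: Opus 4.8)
The plan is to prove the inclusion unconditionally by a spherical-metric argument, and then treat the two directions of the dichotomy separately: equality for odd-$K_4$-minor-free signed graphs by an induction along a series-parallel-type decomposition, and strict inclusion in the presence of an odd-$K_4$ minor by exhibiting one explicit bad weight vector and transporting it back through minor operations. For the inclusion ${\rm arccos}({\cal E}(G,\Sigma))/\pi\subseteq{\rm MET}(G,\Sigma)$, fix $c\in{\cal E}(G,\Sigma)$ and a feasible $X\in{\cal S}^n_+$ with unit diagonal and $X[i,j]\ge c(ij)$ on even edges, $X[i,j]\le c(ij)$ on odd edges; write $X={\rm Gram}(p)$ and set $d(i,j)={\rm arccos}(p(i)\cdot p(j))$, the geodesic distance on the sphere. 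The geometric core is the \emph{odd cycle inequality}: for every cycle $C$ and every $F\subseteq E(C)$ with $|F|$ odd,
\[
\sum_{e\in E(C)\setminus F}d(e)+\sum_{e\in F}\bigl(\pi-d(e)\bigr)\ \ge\ \pi.
\]
To see this, give the vertices of $C$ alternating signs along a closed walk, starting with $+$ at a base vertex $v$ and flipping the sign at each crossing of an edge of $F$; since $|F|$ is odd, $v$ returns with the opposite sign, so the triangle inequality for $d$ applied to the sequence of signed copies $\pm p(\cdot)$ gives total length at least $d(p(v),-p(v))=\pi$, and that total length equals the left-hand side above because $d(a,-b)=\pi-d(a,b)$ precisely on the edges of $F$. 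Applying this with $F=E(C)\cap\Sigma$ for every odd cycle $C$ shows ${\rm arccos}(\pi_G(X))/\pi\in{\rm MET}(G,\Sigma)$. Since ${\rm arccos}$ is decreasing, $c(ij)\le X[i,j]$ on even edges gives ${\rm arccos}(c(ij))\ge{\rm arccos}(X[i,j])$ and $c(ij)\ge X[i,j]$ on odd edges gives ${\rm arccos}(c(ij))\le{\rm arccos}(X[i,j])$, so for every odd cycle $C$,
\[
\sum_{e=ij\in E(C)}\sigma(e)\,\frac{{\rm arccos}(c(ij))}{\pi}\ \ge\ \sum_{e=ij\in E(C)}\sigma(e)\,\frac{{\rm arccos}(X[i,j])}{\pi}\ \ge\ 1-|E(C)\cap\Sigma|,
\]
which is exactly ${\rm arccos}(c)/\pi\in{\rm MET}(G,\Sigma)$.

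For the ``if'' direction (equality when $(G,\Sigma)$ is odd-$K_4$-minor-free) I would induct on $|V(G)|+|E(G)|$, mirroring Laurent's proof of the unsigned statement. Resigning is a symmetry of both sides — on the realization side it is $p(v)\mapsto-p(v)$ for $v$ in the resigning set, on the polytope side the affine involution $x(e)\mapsto 1-x(e)$ on $e\in\delta(X)$ — so one may work up to sign-equivalence, and parallel edges and vertices of degree at most two reduce the instance. The substantive input is that an odd-$K_4$-minor-free signed graph either has no odd cycle, in which case ${\rm MET}$ has no constraints and the all-ones Gram matrix realizes every $c$, or (apart from finitely many small exceptions) admits a $2$-vertex separation $\{u,v\}$ splitting it into strictly smaller odd-$K_4$-minor-free signed graphs $(G_1,\Sigma_1)$ and $(G_2,\Sigma_2)$ — the signed analogue of the series-parallel decomposition of $K_4$-minor-free graphs. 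Given $c\in{\rm MET}(G,\Sigma)$, adjoin to each $G_i$ a virtual $uv$-edge with a common weight $\gamma$ equal to the induced-metric distance between $u$ and $v$ through the opposite side and with a sign dictated by the parity of $u$--$v$ paths on that side; odd-$K_4$-minor-freeness is exactly what makes this parity well defined, keeps each augmented piece odd-$K_4$-minor-free, and keeps the augmented weight vector inside the corresponding metric polytope. By the inductive hypothesis each augmented piece has a spherical realization, and both place $u$ and $v$ at geodesic distance $\pi\gamma$, so a rotation of the ambient sphere glues the two realizations along $\{u,v\}$ into a single spherical framework whose Gram matrix certifies $c\in{\cal E}(G,\Sigma)$.

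For the ``only if'' direction I would prove the contrapositive: an odd-$K_4$ minor forces strict inclusion. Strict inclusion holds for odd-$K_4$ itself, since its only odd cycles are the four triangles, so its metric polytope is cut out inside $[0,1]^6$ by the inequalities $x_{ij}+x_{jk}+x_{ki}\le 2$, which the all-$\tfrac{2}{3}$ vector satisfies; yet ${\rm arccos}(c)/\pi=(\tfrac{2}{3},\dots,\tfrac{2}{3})$ forces $c\equiv-\tfrac{1}{2}$, and a feasible $X\in{\cal E}_4$ for the corresponding problem — with $X\succeq 0$, unit diagonal, and $X[i,j]\le-\tfrac{1}{2}$ for all six pairs — would give ${\bf 1}^{\top}X{\bf 1}=4+2\sum_{i<j}X[i,j]\le 4-6<0$, contradicting $X\succeq 0$. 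It remains to check that each minor operation transports strict inclusion back to the ambient signed graph: resigning is the symmetry above; deleting an edge $e$ is coordinate projection on both sides, and ${\rm MET}(G-e,\Sigma')$ is exactly the projection of ${\rm MET}(G,\Sigma)$ (because $e$ has a fixed sign, so all odd-cycle constraints through $e$ bound $x(e)$ from the same side by a value in $[0,1]$ and hence are jointly satisfiable); and contracting an even edge $e$ is, on the polytope side, intersection with the face $\{x(e)=0\}$ followed by forgetting $e$, turning ${\rm MET}(G,\Sigma)$ into ${\rm MET}(G/e,\Sigma')$, and on the realization side imposing $p(u)=p(v)$, turning ${\cal E}(G,\Sigma)$ into ${\cal E}(G/e,\Sigma')$. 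Chaining these along a minor sequence ending at odd-$K_4$ then yields strict inclusion for $(G,\Sigma)$.

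The main obstacle is the gluing step in the ``if'' direction: the sign and weight of the virtual $uv$-edge must be chosen so that (i) the augmented weight vector stays inside the metric polytope of each side, (ii) each augmented side remains odd-$K_4$-minor-free so the induction applies, and (iii) the two spherical realizations produced by induction actually agree on $\{u,v\}$ up to an ambient rotation, with the glued framework respecting the sign of every original inequality. Controlling how odd cycles can thread a $2$-separation in an odd-$K_4$-minor-free signed graph — in particular, arranging that all $u$--$v$ paths on a given side have the same parity so that the virtual edge's sign is unambiguous — is the delicate combinatorial heart of the argument, which is why the full proof is carried out in the companion paper~\cite{t}.
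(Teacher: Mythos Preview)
The paper does not prove this theorem at all; it only quotes it from the companion paper~\cite{t}, so there is no ``paper's own proof'' to compare against. That said, your inclusion argument via the spherical triangle inequality is clean and correct, and your ``only if'' direction --- exhibiting the all-$\tfrac{2}{3}$ point for odd-$K_4$ and transporting strict inclusion back through resigning, deletion, and even-edge contraction --- is sound.

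There is, however, a genuine structural gap in your ``if'' direction. You assert that an odd-$K_4$-minor-free signed graph with an odd cycle admits, apart from finitely many small exceptions, a $2$-vertex separation along which to run the induction. This is false: the family $(K_n,\{12\})$ with a single odd edge is odd-$K_4$-minor-free for every $n$ (removing the edge $12$ leaves a balanced graph, and having such a ``blocking edge'' is preserved under minors, while odd-$K_4$ has no blocking edge), yet $K_n$ is $(n-1)$-connected and contains odd cycles. So there are arbitrarily large $3$-connected base cases that your decomposition never reaches, and they are not covered by your ``no odd cycle'' clause. The correct structure of odd-$K_4$-minor-free signed graphs involves, beyond $2$-separations, pieces that become balanced after deleting a single vertex (or edge); handling those pieces --- showing directly that ${\rm arccos}({\cal E})/\pi={\rm MET}$ for them --- is a separate and substantial argument that your sketch does not supply. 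Your closing caveat that the gluing step is ``the delicate combinatorial heart'' understates the issue: it is not just that the gluing is delicate, but that the decomposition you propose is not exhaustive.
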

This theorem suggests an exact connection  between supporting hyperplanes of ${\rm MET}(G, \Sigma)$ and those of ${\cal E}(G, \Sigma)$ if $(G, \Sigma)$ is odd-$K_4$ minor free.
In the proof of Theorem~\ref{thm:oddK4} in \cite{t} we make this connection explicit.
Specifically, we say that an odd cycle $C$ is {\em tight} with respect to $c$ if 
\[
\sum_{e\in E(C)} \sigma(e)({\rm arccos}(c(e))/\pi)=1-|E(C)\cap \Sigma|.
\]
This {\em metric} equation uniquely determines a primal solution of the completion problem restricted to $C$,
and if $c$ is nondegenerate there is a unique dual solution $\Omega_C$  (up to scaling) which is supported on $E(C)$ and satisfies the strict complementarity condition with the unique solution.
Let $\Omega=\sum_C \Omega_C$, where the sum is taken over all tight odd cycles (each $\Omega_C$ is regarded as a $n\times n$ matrix by appending zero rows and columns).
It was shown in \cite{t} that, if $(G,\Sigma)$ is odd-$K_4$ minor free, then there always exists a primal solution with rank equal to $n-\rank \Omega$, and hence the supporting hyperplane of the primal feasible region is  determined by $\Omega$.

By regarding each unsigned graph as a singed graph with double edges (as explained in the proof of Corollary~\ref{cor:K4}), the observation can be applied even to (unsigned)$K_4$-minor free graphs. Namely,  the supporting hyperplane of the primal feasible region is  determined by computing tight cycles, where in the unsigned case a cycle $C$ is said to be {\em tight} (with respect to $c$) if $\sum_{e\in E(C)\setminus F} x(e)-\sum_{e\in F} x(e)\geq 1-|F|$ for some $F\subseteq C$ with odd $|F|$. 

\subsection{Bounding nondegenerate singularity degree}
\label{subsec:nondegenerate}

For analyzing nondegenerate singularity degree,  we first remark that the proofs 
of Lemma~\ref{lem:clique_sum} and Lemma~\ref{lem:induced_subgraph} can be adapted to show the corresponding statements for ${\sf sd}^*(G)$, implying the following. 
(We omit the identical proofs.)
\begin{lemma}
\label{lem:nondegenerate_clique_sum}
Let $G$ be a clique sum of two undirected graphs $G_1$ and $G_2$.
Then \[{\sf sd}^*(G)\leq \max\{{\sf sd}^*(G_1), {\sf sd}^*(G_2)\}.\]
\end{lemma}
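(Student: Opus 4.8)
\textbf{Proof plan for Lemma~\ref{lem:nondegenerate_clique_sum}.}
The plan is to mirror the proof of Lemma~\ref{lem:clique_sum} almost verbatim, checking at each step that the construction stays inside the nondegenerate regime. First I would fix a nondegenerate $c\in{\cal E}(G)\cap(-1,1)^{E(G)}$ with ${\sf sd}^*(G)={\sf sd}(G,c)$, and let $c_1,c_2$ be its restrictions to $E(G_1),E(G_2)$. Since $c$ takes no value in $\{-1,1\}$ on any edge of $G$, its restrictions are automatically nondegenerate on $G_1$ and $G_2$; and since any solution $X$ of ${\rm P}(G,c)$ restricts to solutions of ${\rm P}(G_i,c_i)$, we have $c_i\in{\cal E}(G_i)\cap(-1,1)^{E(G_i)}$, so ${\sf sd}(G_i,c_i)\le{\sf sd}^*(G_i)$. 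This is the only genuinely new observation relative to Lemma~\ref{lem:clique_sum}: nondegeneracy is inherited by restriction to an edge subset, so the clique-sum decomposition does not create degenerate subinstances.

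Next I would run the facial reduction on each ${\rm P}(G_i,c_i)$ to obtain sequences $\omega^1_i,\dots,\omega^{h_i}_i$ with $h_i={\sf sd}(G_i,c_i)\le{\sf sd}^*(G_i)$, pad with zero stresses so that $h_1=h_2=:h$, lift each $\omega^j_i$ to a stress of $G$ by zero-extension, and set $\omega^j=\omega^j_1+\omega^j_2$. The verification that $\{\omega^1,\dots,\omega^h\}$ satisfies (c1)(c2)(c3)(c4) is then word-for-word identical to the proof of Lemma~\ref{lem:clique_sum}: (c1) and (c3) are immediate from the block structure; for (c2) one proves by induction on $j$ that any $x\in{\cal V}^j$ restricts to $x_i\in{\cal V}^j_i$ using $0=x^\top\Omega^j x=\sum_i x_i^\top\Omega^j_i x_i\ge0$; and for (c4) one concatenates the Gram representations $X_i=P_i^\top P_i$ along the common clique $S=V(G_1)\cap V(G_2)$ (where $X_1[S,S]=X_2[S,S]$ because $S$ is complete in $G$ so its entries are specified by $c$) to form the block matrix $P$ of (\ref{eq:sum}), and shows every $x\in{\cal V}^h$ is spanned by the rows of $P$ via the row-independence of $P_S$. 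This yields ${\sf sd}^*(G)={\sf sd}(G,c)\le h\le\max\{{\sf sd}^*(G_1),{\sf sd}^*(G_2)\}$.

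There is essentially no obstacle here; the one point requiring a line of care is confirming that the concatenated primal solution $P^\top P$ of (\ref{eq:sum}) is still feasible for a nondegenerate weight — but its entries on $E(G_i)$ equal $c_i$, which is nondegenerate by hypothesis, so this is automatic, and the rank-maximality argument for (c4) does not interact with nondegeneracy at all. This is exactly why the paper says ``We omit the identical proofs'': the sole modification is the remark above that restrictions of a nondegenerate weight to edge subsets remain nondegenerate, after which Lemma~\ref{lem:clique_sum}'s argument applies unchanged with ${\sf sd}$ replaced by ${\sf sd}^*$ throughout.
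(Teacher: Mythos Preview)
Your proposal is correct and matches the paper's intended approach exactly: the paper explicitly states ``We omit the identical proofs,'' and the only extra ingredient needed beyond Lemma~\ref{lem:clique_sum} is precisely your observation that restricting a nondegenerate $c$ to $E(G_i)$ yields a nondegenerate $c_i\in{\cal E}(G_i)$, whence ${\sf sd}(G_i,c_i)\le{\sf sd}^*(G_i)$.
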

\begin{lemma}
\label{lem:nondegenerate_induced_subgraph}
Let $G$ be an undirected graph and $H$ be an induced subgraph of $G$.
Then ${\sf sd}^*(G)\geq {\sf sd}^*(H)$.
\end{lemma}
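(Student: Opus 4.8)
The statement to prove is Lemma~\ref{lem:nondegenerate_induced_subgraph}: for an undirected graph $G$ and an induced subgraph $H$ of $G$, we have ${\sf sd}^*(G)\geq {\sf sd}^*(H)$. As the text remarks just above the statement, the idea is to recycle the proof of Lemma~\ref{lem:induced_subgraph} almost verbatim, keeping track of the single extra requirement that the edge weight on $G$ we construct must be nondegenerate. So the plan is: (1) pick a nondegenerate $c_H\in {\cal E}(H)\cap(-1,1)^{E(H)}$ realizing ${\sf sd}^*(H)={\sf sd}(H,c_H)$, and take a maximum-rank feasible solution ${\rm Gram}(p)$ with $p:V(H)\to\mathbb{S}^{d-1}$; (2) extend $p$ to $V(G)$ by sending each vertex $v_i\in V(G)\setminus V(H)$ to a fresh orthonormal vector ${\bm e}_i$, after rotating so that these ${\bm e}_i$ are orthogonal to $\spa p(V(H))$; (3) set $c_G=\pi_G({\rm Gram}(p))$ for the extended $p$, verify $c_G$ is nondegenerate, and run the facial reduction on ${\rm P}(G,c_G)$; (4) show by the same induction as in Lemma~\ref{lem:induced_subgraph}, using the projected equilibrium condition of Lemma~\ref{lem:singularity_stress}, that no vertex of $V(G)\setminus V(H)$ is ever stressed, so each $\omega^j$ is supported on $E(H)$; (5) conclude, since $H$ is induced, that the restrictions of the $\omega^j$ form a facial reduction sequence for ${\rm P}(H,c_H)$, giving ${\sf sd}(G,c_G)\geq{\sf sd}(H,c_H)={\sf sd}^*(H)$ and hence ${\sf sd}^*(G)\geq{\sf sd}^*(H)$.

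\textbf{The new point.} The only thing that is genuinely not already in the proof of Lemma~\ref{lem:induced_subgraph} is checking that $c_G$ is nondegenerate, i.e.\ that $c_G(e)\notin\{-1,1\}$ for every $e\in E(G)$. Since $H$ is induced, every edge of $G$ is either an edge of $H$ or an edge incident to some $v_i\in V(G)\setminus V(H)$. For $e\in E(H)$ we have $c_G(e)=c_H(e)\in(-1,1)$ by the choice of $c_H$. For an edge $e=uv_i$ with $v_i\notin V(H)$, we have $c_G(e)=p(u)\cdot p(v_i)=p(u)\cdot{\bm e}_i$; since $p(u)$ is a unit vector and, for $u\in V(H)$, $p(u)$ is orthogonal to ${\bm e}_i$ so $c_G(e)=0$, while for $u=v_\ell$ another added vertex $c_G(e)={\bm e}_\ell\cdot{\bm e}_i=0$ as $\ell\neq i$. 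In all cases $c_G(e)\in(-1,1)$, so $c_G$ is nondegenerate. Thus $c_G\in{\cal E}(G)\cap(-1,1)^{E(G)}$ is an admissible input for ${\sf sd}^*(G)$.

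\textbf{Main obstacle.} There is no real obstacle: the heavy lifting — the induction showing $V(G)\setminus V(H)$ is never stressed, which relies on the orthogonality of the ${\bm e}_i$ to $\spa p(V_{j-1})$ being preserved at each stage, together with the projected equilibrium conditions at the $v_i$ and at their neighbours in $H$ — is carried out word-for-word as in Lemma~\ref{lem:induced_subgraph}, and Lemma~\ref{lem:singularity_stress} applies unchanged to the unsigned setting ($\Sigma=\emptyset$). The one thing one must be slightly careful about is that the conditions (c1)(c2)(c3)(c4) for ${\rm P}(H,c_H)$ are inherited by the restricted sequence precisely because $H$ is an \emph{induced} subgraph (so no stress is forced onto a non-edge of $H$), exactly as noted in the original proof. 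Hence the whole argument is genuinely ``the identical proof'' the text alludes to, with the single extra line verifying nondegeneracy of $c_G$.
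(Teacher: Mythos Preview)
Your proposal is correct and is exactly the adaptation the paper has in mind: the paper explicitly says the proof is identical to that of Lemma~\ref{lem:induced_subgraph} and omits it, and you have supplied precisely that argument together with the one genuinely new ingredient, the verification that the extended weight $c_G$ is nondegenerate (which follows immediately since every edge of $G$ outside $E(H)$ has weight $0$ under the orthogonal extension).
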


Before going to our main theorem, we first prove the following easy observation.
\begin{theorem}
\label{thm:sd*0}
Let $G$ be a graph.
Then ${\sf sd}^*(G)=0$ if and only if $G$ has no cycle (i.e., $K_3$-minor free).
\end{theorem}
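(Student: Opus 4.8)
The plan is to prove both directions by reducing to facts already established in the paper. For the ``if'' direction, suppose $G$ has no cycle, so every connected component of $G$ is a tree. Since ${\sf sd}^*$ behaves well under clique sums (Lemma~\ref{lem:nondegenerate_clique_sum}) and a forest is obtained by repeatedly gluing edges ($K_2$'s) and isolated vertices along single-vertex cliques, it suffices to bound ${\sf sd}^*$ on each building block. An isolated vertex gives the trivial one-dimensional problem with a strictly feasible solution, so singularity degree $0$; a single edge $K_2=K_3$-minor free, so by Corollary~\ref{cor:K4} we have ${\sf sd}^*(K_2)\le 1$, but in fact for a nondegenerate weight $c(ij)\in(-1,1)$ the matrix $\begin{pmatrix}1 & c(ij)\\ c(ij) & 1\end{pmatrix}$ is positive \emph{definite}, so the problem is strictly feasible and ${\sf sd}^*(K_2,c)=0$. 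Hence every block has nondegenerate singularity degree $0$, and Lemma~\ref{lem:nondegenerate_clique_sum} (applied inductively, noting $\max\{0,0\}=0$) gives ${\sf sd}^*(G)=0$.

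For the ``only if'' direction, suppose $G$ has a cycle. Then $G$ contains an induced cycle $C_n$ for some $n\ge 3$ (shrink a shortest cycle; it is chordless hence induced). By Lemma~\ref{lem:nondegenerate_induced_subgraph}, ${\sf sd}^*(G)\ge {\sf sd}^*(C_n)$, so it is enough to exhibit, for each $n\ge 3$, a nondegenerate weight $c\in{\cal E}(C_n)\cap(-1,1)^{E(C_n)}$ with ${\sf sd}(C_n,c)\ge 1$. For $n\ge 4$ one can simply take the construction in Lemma~\ref{lem:cycle} but perturb $p$ to a \emph{generic} configuration of $n$ points on $\mathbb{S}^1$ whose pairwise inner products along the cycle edges avoid $\pm 1$ and for which the cycle is still ``tight'' — more directly, any $p:V(C_n)\to\mathbb{S}^1$ realizing $C_n$ with a tight metric cycle (so that the solution is non-maximal-rank, i.e., rank $2<n$) yields a nondegenerate $c$; the metric-inequality argument in Subsection~\ref{subsec:K4}/Lemma~\ref{lem:cycle} shows the corresponding completion problem is not strictly feasible, so ${\sf sd}(C_n,c)\ge 1$. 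For $n=3$, $C_3=K_3$, and for a nondegenerate weight on $K_3$ whose completion has rank $<3$ (e.g. take $p:V\to\mathbb{S}^1$ with three distinct points, say at angles $0,\theta,\phi$ with $0<\theta<\phi<\pi$ so all inner products lie in $(-1,1)$, giving a rank-$2$ Gram matrix), Proposition~\ref{prop:complete}'s proof shows the unique solution is singular, so ${\sf sd}(K_3,c)\ge 1$. In all cases ${\sf sd}^*(C_n)\ge 1$, hence ${\sf sd}^*(G)\ge 1$.

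\textbf{Main obstacle.} The one genuinely delicate point is producing, for \emph{every} $n\ge 3$ including $n=3$, a \emph{nondegenerate} weight $c$ on the induced cycle that still forces a rank-deficient (hence non-strictly-feasible) completion; the degenerate construction of Lemma~\ref{lem:cycle} uses $p(v_1)=p(v_2)$, which makes $c(v_1v_2)=1$. The fix is to observe that rank deficiency of the unique completion is governed by tightness of a metric cycle inequality (Subsection~\ref{subsec:K4}), and tight metric cycles with all coordinates strictly inside $(0,1)$ exist — e.g. any $n$ distinct points on a semicircle of $\mathbb{S}^1$, arranged in cyclic order around the cycle so that the arc lengths sum to a full turn in the metric sense, giving a rank-$2$ Gram matrix with all off-diagonal cycle entries in $(-1,1)$. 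Once such a $c$ is in hand, Laurent's theorem (cited before Theorem~\ref{thm:signed}) plus the uniqueness of the metric-cycle solution shows the completion problem has a unique, rank-$2$, non-positive-definite solution, so its singularity degree is at least $1$, completing the argument.
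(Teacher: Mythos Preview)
Your proposal is correct and follows essentially the same approach as the paper: the ``if'' direction via clique-sum decomposition of a forest into $K_1$'s and $K_2$'s (each with ${\sf sd}^*=0$), and the ``only if'' direction by passing to an induced cycle via Lemma~\ref{lem:nondegenerate_induced_subgraph} and exhibiting a nondegenerate tight-cycle configuration on $\mathbb{S}^1$ with a rank-$2$ (hence non-positive-definite) Gram matrix. The paper's construction is slightly cleaner---it sets $p(v_1)={\bf e}_1$, $p(v_n)={\bf e}_2$, and places the remaining $p(v_i)$ in order on the open spherical segment between them, handling all $n\geq 3$ at once---so your case split between $n=3$ and $n\geq 4$, and your detour through ``perturbing'' the degenerate configuration of Lemma~\ref{lem:cycle}, are unnecessary.
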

\begin{proof}
Suppose that $G$ has no cycle. 
Then ${\sf sd}^*(G)=0$ holds by Proposition~\ref{prop:complete} and Lemma~\ref{lem:nondegenerate_clique_sum}.

In view of Lemma~\ref{lem:nondegenerate_induced_subgraph}, the converse follows by showing  ${\sf sd}^*(C_n)\geq 1$ for cycle $C_n$ with $n\geq 3$.
To see this, take $p:V(C_n)\rightarrow \mathbb{S}^1$ such that $p(v_1)={\bf e}_1$, $p(v_n)={\bf e}_2$, and each $p(v_i)\ (2\leq i\leq n-1)$ lies on the interior of the spherical line segment between $p(v_1)$ and $p(v_n)$ in the ordering of indices. Setting  $X={\rm Gram}(p)$ and $c=\pi_{C_n}(X)$, 
the cycle $C_n$ is tight with respect to $c$, and ${\rm P}(C_n, c)$ has a unique solution  $X$.
Since $X$ is not positive definite and $c$ is nondegenerate, we obtain ${\sf sd}^*(C_n)\geq 1$.
\end{proof}

To describe the characterization of graphs with nondegenerate singularity degree at most one, we need some notation.The inverse operation of an edge-contraction is called a {\em (vertex) splitting} operation.
If a graph $G'$ can be constructed from $G$ by a sequence of splitting operations, $G'$ is said to be a {\em splitting} of $G$.
Moreover, if $G'\neq G$, $G'$ is said to be a {\em proper} splitting of $G$.
{\em Subdividing} an edge $e=uv$ means inserting a new vertex $w$ and replace $e$ with new edges $uw$ and $wv$.
If $G'$ can be constructed from $G$ by subdividing edges of $G$, then 
$G'$ is called a {\em subdivision} of $G$.
Note that splitting a node of degree two or three amounts to subdividing an edge incident to it.

A {\em wheel} $W_n$ is an undirected graph with $n$ vertices formed by connecting a vertex to all vertices of the cycle of length $n-1$.
See Figure~\ref{fig:Wn}(a). 
The vertex adjacent to all other vertices in $W_n$ is called the {\em center vertex}.

Now we are ready to state our main theorem.
\begin{theorem}
\label{thm:nonsingular}
The following assertions are equivalent for an undirected graph  $G$:
\begin{description}
\setlength{\parskip}{0.cm} 
 \setlength{\itemsep}{0.05cm} 
\item[(i)] ${\sf sd}^*(G)\leq 1$.
\item[(ii)] $G$ has neither $W_n\ (n\geq 5)$ nor a proper splitting of $W_n\ (n\geq 4)$ as an induced subgraph.
\item[(iii)] $G$ can be constructed by clique sums of complete graphs and $K_4$-minor free graphs.
\end{description}
\end{theorem}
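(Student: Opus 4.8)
The natural strategy is to prove the cycle of implications (iii)$\Rightarrow$(i)$\Rightarrow$(ii)$\Rightarrow$(iii). The implication (iii)$\Rightarrow$(i) is the easiest: if $G$ is a clique sum of complete graphs and $K_4$-minor free graphs, then by Proposition~\ref{prop:complete} and Corollary~\ref{cor:K4} every building block has nondegenerate singularity degree at most one, and by Lemma~\ref{lem:nondegenerate_clique_sum} this property is preserved under clique sums, so ${\sf sd}^*(G)\le 1$.

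For (i)$\Rightarrow$(ii) I would argue contrapositively using Lemma~\ref{lem:nondegenerate_induced_subgraph}: it suffices to exhibit, for each forbidden graph $H$ (namely $W_n$ with $n\ge 5$, and every proper splitting of $W_n$ with $n\ge 4$), a nondegenerate $c\in{\cal E}(H)\cap(-1,1)^{E(H)}$ with ${\sf sd}(H,c)\ge 2$. The canonical choice is a spherical framework in $\mathbb{S}^1$ analogous to the one used for cycles in Lemma~\ref{lem:cycle}: place the rim vertices on a spherical arc and the center vertex at a generic point, so that the completion problem has a unique low-rank solution. One then runs the facial-reduction argument exactly as in Lemma~\ref{lem:cycle}: the first stress $\omega^1$ must be an equilibrium stress of the $\mathbb{S}^1$-framework, and a degree/collinearity propagation argument (pushing the equilibrium condition along the degree-two or degree-three vertices created by splitting/subdivision) forces $\omega^1$ to vanish on the edges near the ``last'' vertex, so that vertex cannot be stressed at the first stage, hence ${\sf sd}\ge 2$. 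The main technical burden here is to handle \emph{all} proper splittings of $W_n$ uniformly; since splitting a degree-two or degree-three node is just edge subdivision, one can first reduce to subdivisions of $W_4$ and $W_5$-type configurations and then to a bounded case analysis, but the bookkeeping for the center vertex (which may be split, raising its degree case) is delicate.

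The real work is (ii)$\Rightarrow$(iii): assuming $G$ has no $W_n$ ($n\ge5$) and no proper splitting of $W_n$ ($n\ge4$) as an induced subgraph, we must produce a clique-sum decomposition into complete graphs and $K_4$-minor free graphs. I expect this to be a purely graph-theoretic structure theorem. The plan is to induct on $|V(G)|$: if $G$ is complete or $K_4$-minor free we are done; otherwise $G$ has a $K_4$-minor, hence contains a subdivision of $K_4$, and also (being non-complete) we look for a clique separator. The key claim to establish is that a graph with no such induced subgraphs and with a $K_4$-minor but not $K_4$-minor-free \emph{must} have a clique cutset; then one splits along it and applies induction, checking that the forbidden-subgraph hypothesis is inherited by the pieces (induced subgraphs of $G$, since a clique-sum decomposition along a clique preserves inducedness on each side). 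The heart is the claim about clique cutsets: here one argues that a graph without a clique cutset that contains a $K_4$-subdivision, but that also strictly contains more (so as not to be $K_4$-minor free or complete), can be shown to contain an induced wheel or an induced proper splitting of a wheel --- essentially, a $K_4$-subdivision together with a vertex or path ``seeing'' it generically produces such a configuration.

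The step I expect to be the main obstacle is this last claim --- carving out exactly when the absence of induced wheels/split-wheels forces a clique cutset. One likely needs to invoke or reprove a Dirac-type theorem (a graph with no clique cutset and no small structure is highly connected / has specific shape) and then do careful surgery on a minimal $K_4$-subdivision in $G$: take the three branch vertices and an additional vertex in a different bag, trace induced paths between them, and show that minimality of the configuration plus 2- or 3-connectivity yields an induced subdivision of some $W_m$. Managing the chords that could appear on these paths (which would either shorten the $K_4$-subdivision or directly create a smaller wheel) is where the argument will be most intricate, and I would expect to need a separate lemma isolating the ``minimal bad configuration'' and showing it is always an induced (possibly split) wheel.
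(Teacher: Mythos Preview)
Your plan for (iii)$\Rightarrow$(i) matches the paper exactly. Your plan for (ii)$\Rightarrow$(iii), however, is misplaced effort: the equivalence (ii)$\Leftrightarrow$(iii) is a known graph-theoretic result of Johnson and McKee, and the paper simply cites it. So the ``real work'' is not there.

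The genuine gap is in your plan for (i)$\Rightarrow$(ii). You propose to build the bad configurations in $\mathbb{S}^1$, by analogy with the cycle argument of Lemma~\ref{lem:cycle}. This is unlikely to work. The propagation mechanism in Lemma~\ref{lem:cycle} extracts $\omega^1(v_2v_3)=0$ from the equilibrium at $v_2$ only because $p(v_1)=p(v_2)$, i.e.\ the input is \emph{degenerate}. For a nondegenerate $c$ in $\mathbb{S}^1$ every point lies in $\mathbb{R}^2$, so at a rim vertex of $W_n$ the equilibrium condition is two linear equations in four unknowns (the vertex weight plus three incident edge weights), and no single edge weight is forced to vanish. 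You cannot ``push zeros along the rim'' in $\mathbb{S}^1$ without a degeneracy.

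The paper's construction lives in $\mathbb{S}^2$ instead. For $W_n$ one places $v_1,v_2,v_3$ at ${\bf e}_1,{\bf e}_2,{\bf e}_3$, puts $v_0$ on the geodesic $v_1v_2$ (so $\{v_0,v_1,v_2\}$ is a tight triangle spanning only $\{{\bf e}_1,{\bf e}_2\}$), and aligns $v_4,\dots,v_{n-1}$ on the geodesic from $v_3$ back to $v_1$. Unique solvability is then argued geometrically via tight cycles. The propagation works because $p(v_3)={\bf e}_3$ is linearly independent of $p(v_0),p(v_1),p(v_2)$, so the equilibrium at $v_2$ genuinely forces $\omega^1(v_2v_3)=0$, and from there one cascades along the degree-three rim. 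For the proper splittings of $W_n$ the paper builds a small hierarchy of auxiliary frameworks (a subdivided $K_4$ in $\mathbb{S}^2$ and extensions thereof), proves each has a unique completion, and then runs the same linear-independence propagation. The essential idea you are missing is that the lower bound requires a \emph{three}-dimensional realization in which part of the framework is confined to a $2$-plane while an adjacent vertex sticks out of it; this dimensional drop is what isolates a single edge weight in the equilibrium equation.
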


We should remark that the graph class defined by the property of (ii) has been already studied in the context of the PSD completion problem as the family of graphs $G$ whose ${\cal E}(G)$ can be characterized by the metric inequalities and the clique inequalities~\cite{bjl}.
The equivalence between (ii) and (iii) was shown in \cite{jm}.
 (iii) $\Rightarrow$ (i) of Theorem~\ref{thm:nonsingular} follows from  Proposition~\ref{prop:complete}, Corollary~\ref{cor:K4}, and Lemma~\ref{lem:nondegenerate_clique_sum}.
To see (i)$\Rightarrow$(ii) of Theorem~\ref{thm:nonsingular} it suffices to show the following Lemma~\ref{lem:nonsingular}.
\begin{lemma}
\label{lem:nonsingular}
Suppose that $G$ contains $W_n$ for some $n\geq 5$ or a proper splitting of $W_n$ for some $n\geq 4$ as an induced subgraph.
Then ${\sf sd}^*(G)\geq 2$.
\end{lemma}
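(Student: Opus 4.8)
The plan is to invoke Lemma~\ref{lem:nondegenerate_induced_subgraph} to reduce to the case where $G$ itself is $W_n$ with $n\ge 5$ or a proper splitting of $W_n$ with $n\ge 4$, and then to produce, for each such $G$, a nondegenerate $c\in{\cal E}(G)$ with ${\sf sd}(G,c)\ge 2$. As the paper already notes, splitting a vertex of degree two or three merely subdivides an incident edge, so a proper splitting of a rim vertex of $W_n$ subdivides a spoke or a rim edge, while a proper splitting of the centre replaces it by two adjacent vertices, each joined to a non-empty part of the rim; iterating gives wheels with several subdivided edges and several hub vertices. In all of these cases $G$ contains, as an induced subgraph, a cycle $C$ of length at least $4$ together with one or more ``hub'' vertices joined to vertices of $C$ (plus, in the subdivided cases, some paths attached to $C$), and the construction and argument below are uniform in this data.

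\textbf{The construction.} I would realize $C$ flat on $\mathbb{S}^1$, placing its vertices in monotone cyclic order along an open arc of length strictly less than $\pi$, exactly as in the proofs of Lemma~\ref{lem:cycle} and Theorem~\ref{thm:sd*0}, but keeping consecutive angles distinct so that the induced weights on $C$ lie in $(-1,1)$. With this placement $C$ is a tight cycle (the metric inequality for $C$ with $F$ the long closing edge holds with equality), so by the discussion around Theorem~\ref{thm:signed} the restriction of $c$ to $C$ has the flat realization as its unique completion, of rank $2$. I would then place the remaining vertices --- the hub vertices, and any subdivision vertices along the spherical segments between a hub and its rim endpoint, kept tight so the path weights stay nondegenerate --- on the same great circle, so that $X:={\rm Gram}(p)$ has rank $2$; then $c:=\pi_G(X)$ is nondegenerate, and $X$ is its unique completion because the vertices of $C$ span a $2$-plane, whence each hub vertex is pinned by its spokes. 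The hub positions are moduli that will be tuned to put the configuration into a special position, as explained next.

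\textbf{The argument.} Suppose ${\sf sd}(G,c)=1$. Then by Proposition~\ref{prop:super} and Proposition~\ref{prop:rank} there is an equilibrium PSD stress $\Omega\succeq 0$ with $\rank\Omega=n-\rank X$. Writing $P$ for the $(\rank X)\times n$ realization matrix, the equilibrium condition gives $P\Omega=0$, so $\im\Omega\subseteq\ker P$; since $\dim\ker P=n-\rank X=\rank\Omega$, in fact $\im\Omega=\ker P$, i.e.\ $\Omega$ is positive definite on $\ker P$. Write $\Omega=Q^{\top}MQ$ with the rows of $Q$ a basis of $\ker P$ and $M\succ 0$ symmetric; then for each non-edge $uv$ of $G$ the equation $\Omega_{uv}=0$ becomes the linear condition $q_u^{\top}Mq_v=0$, where $q_u$ is the column of $Q$ indexed by $u$. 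By conic duality such an $M\succ 0$ fails to exist precisely when the span of the matrices $\operatorname{sym}(q_uq_v^{\top})$, over non-edges $uv$, contains a nonzero positive semidefinite matrix --- this is the positivity condition that the choice of hub positions is designed to enforce. (Already for $W_5$ this span is the pencil $\operatorname{span}\{\operatorname{sym}(q_{v_1}q_{v_3}^{\top}),\operatorname{sym}(q_{v_2}q_{v_4}^{\top})\}$ in ${\cal S}^3$, and one wants its two rank-$2$ indefinite generators to combine into a PSD matrix; generic flat configurations fail this, so the configuration really must be chosen non-generically.) Once the condition is met we reach a contradiction, so ${\sf sd}(G,c)\ge 2$, hence ${\sf sd}^*(G)\ge 2$.

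\textbf{The main obstacle.} The hard part will be verifying that positivity condition uniformly: showing that for every relevant $G$ --- all $n$, all patterns of subdivided edges and split hubs --- the hub and subdivision vertices can be placed so that the flat framework stays uniquely completable and nondegenerate while the span of the $\operatorname{sym}(q_uq_v^{\top})$'s over non-edges acquires a nonzero PSD element. This needs a sufficiently explicit description of $\ker P$, equivalently of the equilibrium-stress space of the flat wheel, and is where the bulk of the argument lies; a stress-localization step in the spirit of Lemma~\ref{lem:singularity_stress} (showing the first facial-reduction stress already vanishes off a small part of $G$, as in Lemma~\ref{lem:cycle} and Lemma~\ref{lem:induced_subgraph}) may help cut the general case down to a manageable family before the linear-algebraic verification.
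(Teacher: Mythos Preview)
Your reduction via Lemma~\ref{lem:nondegenerate_induced_subgraph} matches the paper's. After that the approaches diverge: you propose a rank-$2$ realization on $\mathbb{S}^1$ together with a conic-duality argument on the stress kernel, whereas the paper uses a rank-$3$ realization in $\mathbb{S}^2$ and a direct equilibrium-propagation argument. Your proposal has a genuine gap precisely where you name the ``main obstacle'': you never verify that the span of the $\operatorname{sym}(q_uq_v^\top)$ over non-edges contains a nonzero PSD element, and you yourself note that generic flat configurations fail this. That verification is the heart of the argument and is not carried out---not even for $W_5$, let alone uniformly across subdivisions and center-splittings. Your claim that the construction is ``uniform in this data'' is also optimistic: the paper first passes to a \emph{minimal} such induced subgraph, observes it must be either (i) a subdivision of $W_n$ ($n\ge 4$, $\ne K_4$) or (ii) a subdivision of a once-center-split $W_n$, and then treats each case with a tailored realization.

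The idea you are missing is the one you gesture at in your last sentence. The paper places one vertex at ${\bf e}_3$, off the $2$-plane spanned by its neighbors; the equilibrium condition~(\ref{eq:equilibrium}) at the first-stage stress then forces certain spoke and rim entries to vanish because the relevant $p(v_i)$'s are linearly independent, and this zero propagates along the rim to show that a vertex which must eventually be stressed is not stressed at stage one, whence ${\sf sd}^*\ge 2$. The extra dimension is what makes the propagation work; in your flat configuration all points lie in one $2$-plane and no such linear-independence argument is available, which is why you are pushed into the unresolved conic-duality computation. Unique solvability of the $3$-dimensional framework is handled by a short chain of auxiliary lemmas (Lemmas~\ref{lem:G2}--\ref{lem:G4}), replacing your tight-cycle argument. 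So the ``stress-localization step in the spirit of Lemma~\ref{lem:singularity_stress}'' is not a fallback but the main line of the paper's proof.
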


By Lemma~\ref{lem:nondegenerate_induced_subgraph}, Lemma~\ref{lem:nonsingular} follows by showing ${\sf sd}^*(G)\geq 2$ if 
$G$ is isomorphic to $W_n$ for some $n\geq 5$ or a proper splitting of $W_n$ for some $n\geq 4$.
We first solve the case when $G$ is isomorphic to $W_n$ in Lemma~\ref{lem:Wn}, and then extend the proof idea to the case when $G$ is a proper splitting of $W_n$.
For simplicity of notation, throughout the following discussion, 
${\rm P}(G,\pi_G({\rm Gram}(p)))$ is denoted by ${\rm P}(G,p)$ for each spherical framework $(G,p)$.

\begin{figure}
\centering
\begin{minipage}{0.3\textwidth}
\centering
\includegraphics[scale=0.5]{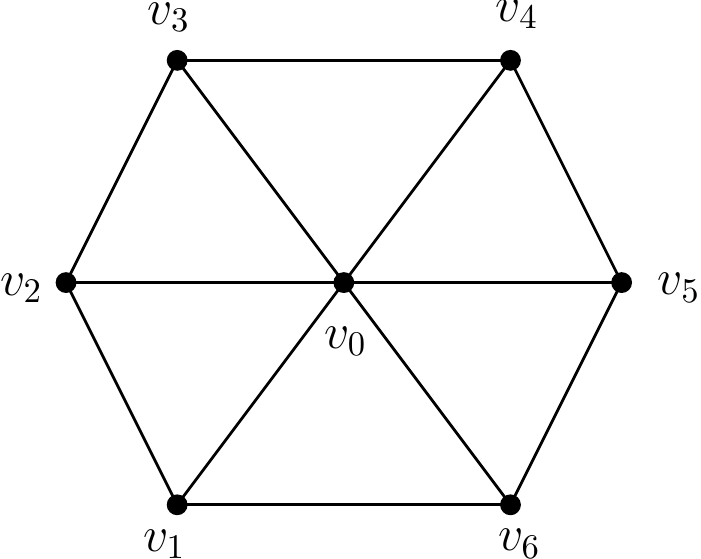}
\par
(a)
\end{minipage}
\begin{minipage}{0.36\textwidth}
\centering
\includegraphics[scale=0.4]{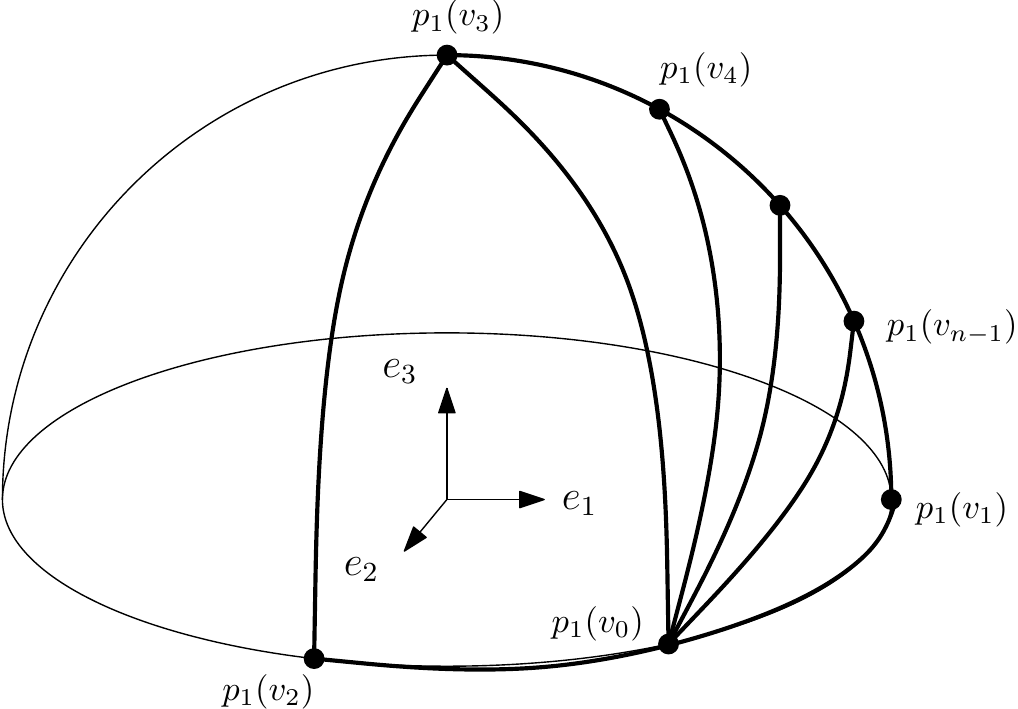}
\par
(b)
\end{minipage}
\begin{minipage}{0.28\textwidth}
\centering
\includegraphics[scale=0.6]{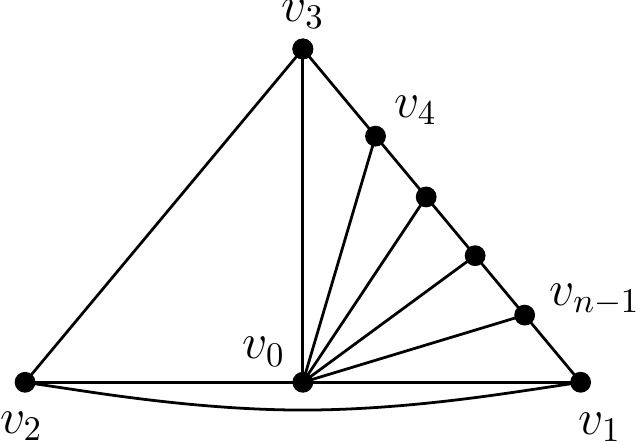}
\par
(c)
\end{minipage}
\caption{(a) $W_n$, (b) $(W_n, p_1)$, (c) a projection of $(W_n, p_1)$ to the plane
(representing the point-line incidence).}
\label{fig:Wn}
\end{figure}

\begin{lemma}
\label{lem:Wn}
${\sf sd}^*(W_n)\geq 2$ if $n\geq 5$.
\end{lemma}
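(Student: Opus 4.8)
\textbf{Proof plan for Lemma~\ref{lem:Wn}.}

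The plan is to exhibit a specific spherical framework $(W_n, p)$ whose completion problem has a unique solution of rank two, and then to show that any facial reduction sequence needs at least two stages to certify this. Write $V(W_n)=\{v_0,v_1,\dots,v_{n-1}\}$ where $v_0$ is the center vertex and $v_1,\dots,v_{n-1}$ form the outer cycle in cyclic order. I would place the outer cycle on a great semicircle: choose $p(v_1),\dots,p(v_{n-1})\in\mathbb{S}^1$ so that they lie in order along the interior of a spherical line segment (so the outer cycle is a tight cycle in the sense of Section~\ref{subsec:K4}), and place the center vertex $p(v_0)$ at a generic point of $\mathbb{S}^1$ \emph{not} on that semicircle, with all the chord lengths $p(v_0)\cdot p(v_i)$ distinct from $\pm 1$, so that $c:=\pi_{W_n}({\rm Gram}(p))$ is nondegenerate. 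Since the outer cycle alone already forces the $p(v_i)$ ($i\geq 1$) to be collinear on the sphere by the metric equation (exactly as in the proof of Theorem~\ref{thm:sd*0} and Lemma~\ref{lem:cycle}), and then the spoke constraints to a point off that line pin down $p(v_0)$ up to reflection, one checks that ${\rm P}(W_n,c)$ has $X={\rm Gram}(p)$ as its unique solution; here $\rank X=2$, so $\corank X=n-2$.

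Now suppose $\{\omega^1,\dots,\omega^h\}$ is the facial reduction sequence for ${\rm P}(W_n,c)$; by Proposition~\ref{prop:rank} and uniqueness its rank is $n-2$. I will argue that the first stage $\omega^1$ cannot already stress enough of the graph, so $h\geq 2$. The key tool is Lemma~\ref{lem:singularity_stress}: $\omega^1$ satisfies the genuine equilibrium condition (\ref{eq:equilibrium}) at every vertex of $(W_n,p)$. Consider a degree-three vertex $v_i$ of the outer cycle (any $v_i$ with $2\le i\le n-2$): its neighbours are $v_{i-1}$, $v_{i+1}$, $v_0$, and the equilibrium condition reads $\omega^1(v_i)p(v_i)+\omega^1(v_{i-1}v_i)p(v_{i-1})+\omega^1(v_{i+1}v_i)p(v_{i+1})+\omega^1(v_0v_i)p(v_0)=0$. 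Since $p(v_{i-1}),p(v_i),p(v_{i+1})$ all lie on the great circle through the semicircle while $p(v_0)$ does not, the four vectors span $\mathbb{R}^2$ with the first three on a line; projecting onto the direction orthogonal to that line forces $\omega^1(v_0v_i)=0$. Hence $\omega^1$ vanishes on every spoke except possibly $v_0v_1$ and $v_0v_{n-1}$ (the spokes to the two endpoints of the semicircle, which behave like the degree-two endpoints in Lemma~\ref{lem:cycle}). Then equilibrium at the center $v_0$ becomes $\omega^1(v_0)p(v_0)+\omega^1(v_0v_1)p(v_1)+\omega^1(v_0v_{n-1})p(v_{n-1})=0$, a relation among three vectors spanning $\mathbb{R}^2$ with $p(v_1),p(v_{n-1})$ being the two endpoints of the segment; a short case analysis (using that $p(v_0)$ is generic, in particular not on the line through $p(v_1)$ and $p(v_{n-1})$ unless that line is the semicircle's, which it is) pins down the ratio and, crucially, shows $\Omega^1$ restricted to the center and the two special outer vertices has rank at most one. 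Combining, $\Omega^1$ is a PSD matrix supported on the outer cycle that, as in Lemma~\ref{lem:cycle}, has rank at most $1$ there; so $\corank$ reached after one stage is at most $n-3 < n-2$, forcing $h\geq 2$, i.e.\ ${\sf sd}(W_n,c)\geq 2$, and therefore ${\sf sd}^*(W_n)\geq 2$.

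The main obstacle I anticipate is the precise bookkeeping at the two ``endpoint'' spokes $v_0v_1$ and $v_0v_{n-1}$ and at the center vertex: one must rule out that $\omega^1$ simultaneously stresses the whole outer cycle \emph{and} those two spokes in a way that already achieves corank $n-2$. This requires showing that a rank-$(n-2)$ first stress is impossible, which amounts to analysing how much of $\mathbb{S}^1$ a single equilibrium PSD stress of $(W_n,p)$ can ``collapse''; the outer-cycle analysis of Lemma~\ref{lem:cycle} does most of the work (an equilibrium stress cannot propagate a nonzero value past an interior degree-two-behaving vertex without creating a higher-rank completion), but because $W_n$ has the extra hub the argument needs the projected/orthogonal-direction trick above to kill the spokes first and then reduce to the cycle case. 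I would organize the write-up as: (1) construct $p$ and verify nondegeneracy and uniqueness of the rank-two completion; (2) apply Lemma~\ref{lem:singularity_stress} and the orthogonal-projection argument to show $\omega^1$ vanishes on all spokes except the two endpoint ones; (3) analyse the center equilibrium plus the outer-cycle equilibria to bound $\rank\Omega^1\leq n-3$; (4) conclude $h\geq 2$.
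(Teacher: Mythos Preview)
Your construction places the framework in $\mathbb{S}^1\subset\mathbb{R}^2$, and this is where the argument breaks. In step~(2) you claim that projecting the equilibrium relation at an interior rim vertex $v_i$ onto ``the direction orthogonal to the line through $p(v_{i-1}),p(v_i),p(v_{i+1})$'' kills the spoke stress $\omega^1(v_0v_i)$. But those three vectors are distinct unit vectors in $\mathbb{R}^2$, so their linear span is all of $\mathbb{R}^2$; there is no orthogonal direction to project onto, and $p(v_0)$ lies in that span as well. (You seem to be conflating the spherical \emph{arc} on which the rim vertices sit with a proper linear subspace; on $\mathbb{S}^1$ the only great circle is $\mathbb{S}^1$ itself.) The equilibrium condition at $v_i$ is therefore just two scalar equations in the four unknowns $\omega^1(v_i),\omega^1(v_{i-1}v_i),\omega^1(v_iv_{i+1}),\omega^1(v_0v_i)$ and does not force $\omega^1(v_0v_i)=0$. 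Step~(3) presupposes that the interior spokes are already dead, so it cannot proceed either. Worse, a dimension count suggests your specific framework may in fact have singularity degree one: the space of equilibrium stresses of $(W_n,p)$ has dimension at least $n-2$, and for a generic position of $p(v_0)$ there is no evident obstruction to perturbing the rank-$(n-3)$ outer-cycle stress within that space to a PSD stress of full corank $n-2$.

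The paper's proof escapes this by working one dimension higher, in $\mathbb{S}^2$. It puts $p_1(v_1)={\bf e}_1$, $p_1(v_2)={\bf e}_2$, and $p_1(v_0)$ on the segment between them (so $\{v_0,v_1,v_2\}$ is a tight triangle spanning only ${\rm span}\{{\bf e}_1,{\bf e}_2\}$), but $p_1(v_3)={\bf e}_3$ lies off that plane, with $v_4,\dots,v_{n-1}$ on the segment from $v_3$ back to $v_1$. Now at $v_2$ the neighbours are $v_0,v_1,v_3$, and only $p_1(v_3)$ has an ${\bf e}_3$-component; the ${\bf e}_3$-coordinate of the equilibrium equation genuinely forces $\omega^1(v_2v_3)=0$, and one then propagates from $v_3$ outward to show $v_3$ and $v_4$ are unstressed at the first stage, while the unique (rank-$3$) completion requires them to be stressed eventually. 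The extra ambient dimension is exactly what is needed to isolate a direction in which only one neighbour contributes; a rank-$2$ configuration cannot supply it.
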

\begin{proof}
Denote  $V(W_n)=\{v_0, v_1,\dots, v_{n-1}\}$ such that $v_0$ is the center vertex and 
$v_1,\dots, v_{n-1}$ are linked in this ordering.
We consider $p_1:V(W_n)\rightarrow \mathbb{S}^2$ defined as follows:
\begin{itemize}
\setlength{\parskip}{0.cm} 
 \setlength{\itemsep}{0.05cm} 
\item $p_1(v_i)={\bf e}_i$ for $i=1,2,3$, and $p_1(v_0)$ is the midpoint of the (spherical) line segment between $p_1(v_1)$ and $p_1(v_2)$.
\item $p_1(v_4), \dots, p_1(v_{n-1})$ are aligned on the spherical line segment between $p_1(v_3)$ and $p_1(v_1)$ in this ordering so that the length of the line segment between two consecutive vertices is nonzero.
\end{itemize}
See Figure~\ref{fig:Wn}.
We first show that ${\rm P}(W_n, p_1)$ has a unique solution, which is ${\rm Gram}(p_1)$.
To see this, take any solution ${\rm Gram}(q)$ of ${\rm P}(W_n, p_1)$. 
$\{p_1(v_0), p_1(v_1), p_1(v_2)\}$ is congruent to   $\{q(v_0), q(v_1), q(v_2)\}$ since $\{v_0,v_1,v_2\}$ forms a tight cycle (with respect to $c=\pi_G({\rm Gram}(p))$).
Since the span of $\{q(v_0), q(v_1), q(v_2)\}$ forms a line but $v_3$ is connected to $\{v_0, v_1, v_2\}$ by two edges, $q(v_3)$ is uniquely determined (up to orthogonal transformation).
Therefore, $q(v_3)\cdot q(v_1)$ is uniquely determined, which is equal to zero. 
Hence the solution set of ${\rm P}(W_n, p_1)$ is equal to that of 
${\rm P}(W_n+v_1v_3, p_1)$.
In the latter problem,  $\{v_1, v_3, v_4, \dots, v_{n-1}\}$ forms a tight cycle, and hence 
the remaining positions of $q$ are also unique.
Thus ${\rm P}(W_n, p_1)$ has a unique solution.
This in turn implies that at least $p_1(v_{3})$ or $p_1(v_{4})$ must be stressed at the final stage of the facial reduction, since otherwise we would have a solution of rank more than $\rank {\rm Gram}(p_1)$.

Let $\omega$ be the first stress of the facial reduction to  ${\rm P}(W_n, p_1)$.
Since $\{p_1(v_0), p_1(v_1), p_1(v_2)\}$ lines on ${\rm span}\{{\bf e}_1, {\bf e}_2\}$ while $p_1(v_3)$ is not,  
the equilibrium condition~(\ref{eq:equilibrium})  at $v_2$ implies  $\omega(v_2v_3)=0$.
Hence, the equilibrium condition at $v_3$ implies $\omega(v_0v_3)=0$ and $\omega(v_3v_4)=0$.
Now,  since $p_1(v_3), \dots, p_1(v_{n-1}), p_1(v_1)$ lie on a spherical line, 
applying the equilibrium condition from $v_4$ through $v_{n-1}$,
we get that $v_3$ and $v_{4}$ are not stressed at the first stage.
This in turn implies ${\sf sd}^*(W_n)\geq 2$.
\end{proof}

The key of the proof of Lemma~\ref{lem:Wn} is the construction of $p_1$ for which ${\rm P}(W_n, p_1)$ has a unique solution.
In order to apply the above proof idea to a subdivision or/and a splitting of $W_n$, we next examine the unique solvability of ${\rm P}(G_4,p_4)$ for a spherical framework $(G_4, p_4)$ defined below. 
The construction of $(G_4, p_4)$ is best explained by first looking at smaller examples $(G_2, p_2)$ and $(G_3, p_3)$ defined as follows.
\begin{itemize}
\setlength{\parskip}{0.cm} 
 \setlength{\itemsep}{0.05cm} 
\item $G_2$ is defined as the graph obtained from $K_4$ by subdividing edge $v_0v_3$ once,
and the new vertex is denoted by $w$.
\item $p_2(v_1)={\bf e}_1, p_2(v_2)={\bf e}_2, p_2(w)={\bf e}_3$.
\item $p_2(v_0)$ is the midpoint of the spherical line segment between $p_2(v_1)$ and $p_2(v_2)$.
\item $p_2(v_3)$ is the midpoint of the spherical line segment between $p_2(v_0)$ and $p_2(w)$.
\end{itemize}
See Figure~\ref{fig:G2}(a).

\begin{figure}
\centering
\begin{minipage}{0.4\textwidth}
\centering
\includegraphics[scale=0.55]{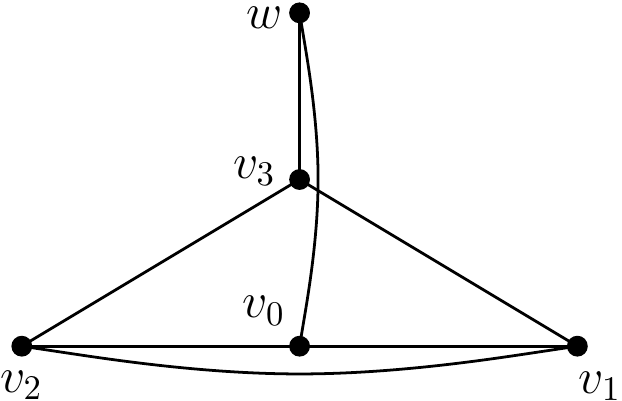}
\par
(a) $(G_2, p_2)$
\end{minipage}
\begin{minipage}{0.4\textwidth}
\centering
\includegraphics[scale=0.55]{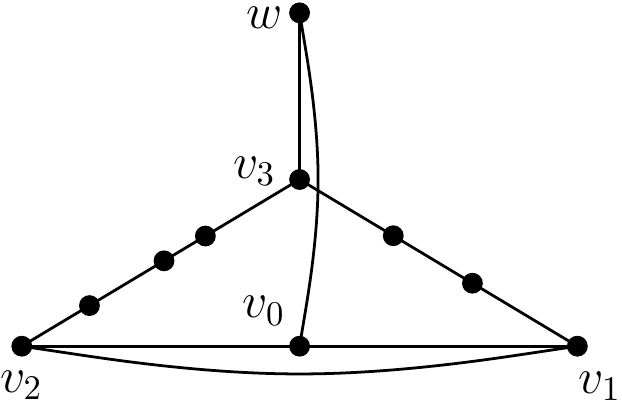}
\par
(b) $(G_3, p_3)$
\end{minipage}
\caption{(a) A projection of $(G_2, p_2)$ and (b) that of $(G_3, p_3)$ to the plane.}
\label{fig:G2}
\end{figure}

\begin{lemma}
\label{lem:G2}
${\rm P}(G_2, p_2)$ has a unique solution.
\end{lemma}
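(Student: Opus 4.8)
\textbf{Proof plan for Lemma~\ref{lem:G2}.}

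The plan is to show directly that any feasible solution ${\rm Gram}(q)$ of ${\rm P}(G_2,p_2)$ must coincide with ${\rm Gram}(p_2)$, by propagating rigidity from the triangle $\{v_0,v_1,v_2\}$ outward. First I would observe that $\{v_0,v_1,v_2\}$ forms a triangle in $G_2$ which is tight with respect to $c=\pi_{G_2}({\rm Gram}(p_2))$: indeed $p_2(v_0)$ is the midpoint of the spherical segment between $p_2(v_1)$ and $p_2(v_2)$, so the three arc-lengths add up in the metric-equation sense described just before this lemma. Consequently, by the uniqueness of the primal solution restricted to a tight cycle, $\{q(v_0),q(v_1),q(v_2)\}$ is congruent to $\{p_2(v_0),p_2(v_1),p_2(v_2)\}$; in particular $\spa\{q(v_0),q(v_1),q(v_2)\}$ is a line (one-dimensional), say $L$, and $q(v_0)$ is the midpoint of the $q(v_1)q(v_2)$ segment on the sphere.

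Next I would use the edges incident to $w$ and $v_3$ to pin down the remaining two points. In $G_2$ the vertex $w$ is joined to $v_1,v_2$ (the two edges of $K_4$ at $v_3$ other than $v_0v_3$ are $v_1v_3$ and $v_2v_3$, but after subdividing $v_0v_3$ the vertex $w$ sits between $v_0$ and $v_3$, and $v_3$ retains its edges to $v_1,v_2$). So I need to be careful reading off the adjacencies: $w$ has neighbors $v_0$ and $v_3$ only (degree two), while $v_3$ has neighbors $v_1,v_2,w$. The inner product $q(v_1)\cdot q(v_2)$ is already fixed (it equals $p_2(v_1)\cdot p_2(v_2)=0$), and $q(v_3)$ is constrained by $q(v_3)\cdot q(v_1)=p_2(v_3)\cdot p_2(v_1)$ and $q(v_3)\cdot q(v_2)=p_2(v_3)\cdot p_2(v_2)$. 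Since $q(v_1),q(v_2)$ span the line $L$, these two equations determine the component of $q(v_3)$ along $L$; together with $\|q(v_3)\|=1$ they determine $q(v_3)$ up to reflection in $L$, hence $q(v_3)\cdot q(v_0)$ is uniquely determined and equals $p_2(v_3)\cdot p_2(v_0)$. Thus the solution set of ${\rm P}(G_2,p_2)$ equals that of ${\rm P}(G_2+v_0v_3,p_2)$, exactly as in the proof of Lemma~\ref{lem:Wn}. In the augmented graph, $\{v_0,w,v_3\}$ forms a triangle, and it is tight because $p_2(v_3)$ is the midpoint of the $p_2(v_0)p_2(w)$ spherical segment; so $q(w)\cdot q(v_0)$, $q(w)\cdot q(v_3)$ are forced. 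All entries of ${\rm Gram}(q)$ indexed by edges of $G_2$ are then equal to those of ${\rm Gram}(p_2)$; since $\{v_0,v_1,v_2,v_3\}$ is a clique in $G_2+v_0v_3$ and $w$ is only adjacent to $v_0$ and $v_3$, the remaining missing entry $q(w)\cdot q(v_i)$ for $i=1,2$ is determined by the fact that $q(w)$ lies in the span of $q(v_0),q(v_3)\subseteq L$, hence in $L$, hence is a fixed unit vector of $L$ (up to the global orthogonal transformation), so ${\rm Gram}(q)={\rm Gram}(p_2)$.

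The main obstacle I anticipate is the bookkeeping of which vertex is the midpoint of which segment and, correspondingly, which triangles are tight — the argument is a chain of ``determine one more point from two already-determined neighbors,'' and each link relies on the previously-determined points spanning only a line so that two inner-product constraints plus the unit-norm constraint leave only a reflection ambiguity, which is then killed by a tightness/midpoint relation. A secondary subtlety is making precise, once we pass to the augmented graph $G_2+v_0v_3$, that adding the edge $v_0v_3$ does not change the feasible set: this is the same ``unique determination up to orthogonal transformation, so the new inner product is forced'' step used in Lemma~\ref{lem:Wn}, and I would invoke it verbatim. Everything else is the standard congruence-on-a-tight-cycle fact stated in Subsection~\ref{subsec:K4}.
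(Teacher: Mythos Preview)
Your overall strategy matches the paper's proof: show the restriction to $\{v_0,v_1,v_2,v_3\}$ has a unique completion (the paper simply asserts this as elementary geometry, citing Theorem~5.3 of \cite{t}; you unpack it via the tight triangle $\{v_0,v_1,v_2\}$ plus the two edges $v_1v_3,v_2v_3$), conclude that $q(v_0)\cdot q(v_3)$ is forced, pass to $G_2+v_0v_3$, and use the tight triangle $\{v_0,v_3,w\}$ to finish. That core chain is exactly right.

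However, the final step of your write-up contains a genuine error. You assert that $q(w)$ lies in $\spa\{q(v_0),q(v_3)\}\subseteq L$, hence $q(w)\in L$. But $q(v_3)\notin L$: in the given configuration $p_2(v_3)=\bigl((e_1+e_2)/\sqrt{2}+e_3\bigr)/\sqrt{2}$ has a nonzero $e_3$-component, so it does not lie in the $2$-plane $L=\spa\{e_1,e_2\}$ (equivalently, not on the spherical line through $p_2(v_1),p_2(v_2)$). Consequently $q(w)$ (which equals $p_2(w)=e_3$) is \emph{not} in $L$ either, and the sentence ``hence is a fixed unit vector of $L$'' is false as written.

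The fix is immediate and does not change your plan: from tightness of $\{v_0,v_3,w\}$ you get $q(w)=\alpha\,q(v_0)+\beta\,q(v_3)$ for specific scalars $\alpha,\beta$ (because $q(v_3)$ lies between $q(v_0)$ and $q(w)$ on a great circle). Since $q(v_0)\cdot q(v_i)$ and $q(v_3)\cdot q(v_i)$ are already determined for $i=1,2$, so is $q(w)\cdot q(v_i)=\alpha\,q(v_0)\cdot q(v_i)+\beta\,q(v_3)\cdot q(v_i)$. This closes the argument; your earlier steps already pinned down all other entries of ${\rm Gram}(q)$.
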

\begin{proof}
Observe that the problem has a unique solution if it is restricted to $\{v_0, v_1, v_2, v_3\}$.
(This  can be checked by elementary geometry, but
we can also apply Theorem~5.3 given in \cite{t} to confirm this observation.)
This means that $q(v_0)\cdot q(v_3)=p_2(v_0)\cdot p_2(v_3)$ for any solution ${\rm Gram}(q)$ of ${\rm P}(G_2, p_2)$. 
Note that, in ${\rm P}(G_2+v_0v_3, p_2)$, $\{v_0, v_3, w\}$ forms a tight cycle.
Hence $q$ is uniquely determined.
\end{proof}

Next we consider a spherical framework $(G_3,p_3)$ obtained from $(G_2, p_2)$ 
by a sequence of subdivisions  of the spherical line segments 
$p_2(v_3)p_2(v_1)$ and $p_2(v_3)p_2(v_2)$.
See Figure~\ref{fig:G2}(b).
\begin{lemma}
\label{lem:G3}
${\rm P}(G_3, p_3)$ has a unique solution.
\end{lemma}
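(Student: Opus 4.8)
\textbf{Proof plan for Lemma~\ref{lem:G3}.}

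The plan is to follow the same two-stage strategy that worked for Lemma~\ref{lem:G2}: first argue that certain inner products are forced by the tight-cycle structure, thereby reducing $\mathrm{P}(G_3,p_3)$ to a problem in which a chord has effectively been added, and then close up the remaining positions along the subdivided arcs by another tight-cycle argument. Recall that $G_3$ is obtained from $G_2$ by subdividing the spherical segments $p_2(v_3)p_2(v_1)$ and $p_2(v_3)p_2(v_2)$; write the subdivision vertices on the first arc as $v_3 = a_0, a_1, \dots, a_s = v_1$ and on the second arc as $v_3 = b_0, b_1, \dots, b_t = v_2$, each consecutive pair joined by an edge, with all the $p_3(a_i)$ (resp.\ $p_3(b_j)$) lying in order on the corresponding spherical geodesic. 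The core subframework on $\{v_0,v_1,v_2,w\}$ together with the subdivision vertex between $v_0$ and $v_3$ (call it $w'$, inherited from $G_2$) is exactly $(G_2,p_2)$, so by Lemma~\ref{lem:G2} any solution $\mathrm{Gram}(q)$ of $\mathrm{P}(G_3,p_3)$ restricted to those vertices is congruent to $p_2$; in particular $q(v_0), q(v_1), q(v_2)$ span a $2$-dimensional space and all their pairwise inner products are pinned.

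First I would show $q(v_3)$ is determined up to the same orthogonal frame. In $(G_2,p_2)$ the vertex $v_3$ was the midpoint of $p_2(v_0)p_2(w)$ and is joined in $G_3$ to $w'$ and to $v_0$ (via the path through $w'$, which is a tight cycle on $\{v_0,v_3,w\}$ after the chord $v_0v_3$ is restored), so once the core is rigid, $q(v_3)\cdot q(v_0)$ and $q(v_3)\cdot q(w)$ are forced, hence $q(v_3)$ lies on the geodesic between $q(v_0)$ and $q(w)$ at the prescribed position, and in particular the plane spanned by $\{q(v_0),q(v_1),q(v_2),q(v_3)\}$ is still only $2$-dimensional — this is the crucial geometric feature of the midpoint construction. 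Now the arc $v_3 = a_0, a_1, \dots, a_s = v_1$ forms a path whose two endpoints $v_3$ and $v_1$ already have their mutual inner product determined (both lie in the known $2$-plane), so $\{v_3, a_1, \dots, a_{s-1}, v_1\}$ is a tight cycle with respect to $c = \pi_{G_3}(\mathrm{Gram}(p_3))$ once the chord $v_3v_1$ is added; by the metric-equation argument (Section~\ref{subsec:K4}), the tight-cycle equation uniquely determines the positions $q(a_1),\dots,q(a_{s-1})$ relative to the fixed frame, and the same argument applied to $\{v_3, b_1, \dots, b_{t-1}, v_2\}$ fixes $q(b_1),\dots,q(b_{t-1})$. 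Thus every coordinate of $q$ is determined and $\mathrm{P}(G_3,p_3)$ has a unique solution.

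The step I expect to be the main obstacle is making precise the claim that adding the chord $v_3v_1$ (respectively $v_3v_2$) does not change the feasible set, i.e.\ that $q(v_3)\cdot q(v_1)$ is already forced before the chord is inserted. In Lemma~\ref{lem:G2} the analogous fact was immediate because $\{v_0,v_1,v_2\}$ spanned a line and $v_3$ hung off two of those vertices; here the core spans a $2$-plane, so one must argue that $q(v_3)$, being a prescribed convex/geodesic combination of $q(v_0)$ and $q(w)$ with $q(w)$ itself pinned to the core via the triangle $v_1v_2w$ (or via whatever edges of $G_2$ attach $w$), still lands in that $2$-plane — this is exactly where the specific midpoint placements in the definition of $p_2$ are used, and where invoking Theorem~5.3 of \cite{t} (as already done in the proof of Lemma~\ref{lem:G2}) on the relevant $K_4$-minor-free subframework is the cleanest route. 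Once that planarity-of-the-core observation is in hand, the repeated tight-cycle bookkeeping along the two subdivided arcs is routine and parallels the argument in Lemma~\ref{lem:Wn}.
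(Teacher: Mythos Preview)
Your argument has a genuine gap at the very first step. You claim that $G_3$ contains $(G_2,p_2)$ as a subframework and then invoke Lemma~\ref{lem:G2} to pin down the core vertices. But $G_3$ does \emph{not} contain $G_2$: in passing from $G_2$ to $G_3$ the edges $v_1v_3$ and $v_2v_3$ were subdivided, so they are no longer edges of $G_3$. The induced subframework of $G_3$ on $\{v_0,v_1,v_2,v_3,w\}$ is just the triangle $v_0v_1v_2$ together with the dangling path $v_0\text{--}w\text{--}v_3$; this is far from rigid, since $q(v_3)$ is only constrained by the single edge $wv_3$ and can move on a whole circle. In particular your claim that ``$q(v_3)\cdot q(v_0)$ is forced'' is false: the path $v_0\text{--}w\text{--}v_3$ gives only the one-sided bound $q(v_0)\cdot q(v_3)\le p_3(v_0)\cdot p_3(v_3)$, not equality, and no appeal to Theorem~5.3 of \cite{t} can repair this, because the subframework in question is simply not universally rigid.

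The paper's proof runs your two stages in the \emph{opposite} order, and this reordering is the whole point. First the subdivided arcs are used: each path from $v_3$ to $v_i$ ($i=1,2$) gives a one-sided metric inequality $q(v_3)\cdot q(v_i)\ge p_3(v_3)\cdot p_3(v_i)$. Since $q(v_0)$ is forced (by the triangle $v_0v_1v_2$) to be a fixed positive combination of $q(v_1)$ and $q(v_2)$, these two inequalities combine to give $q(v_0)\cdot q(v_3)\ge p_3(v_0)\cdot p_3(v_3)$. This is exactly the inequality opposite to the one coming from the path $v_0\text{--}w\text{--}v_3$, so both are equalities, and with equality in hand one now knows $q(v_3)\cdot q(v_i)=p_3(v_3)\cdot p_3(v_i)$ for $i=1,2$. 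Only at this point can Lemma~\ref{lem:G2} be applied (the missing edges have effectively been restored), and then the tight-cycle argument along the arcs finishes the proof as you describe.
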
 
\begin{proof}
Take  any solution ${\rm Gram}(q)$ of ${\rm P}(G_3, p_3)$.
By the construction of $(G_3, p_3)$,  we have $q(v_3)\cdot q(v_i)\geq p_3(v_3)\cdot p_3(v_i)$ for $i=1,2$.
This implies $q(v_0)\cdot q(v_3)\geq p_3(v_0)\cdot p_3(v_3)$, where 
the equality holds if and only if $q(v_3)\cdot q(v_i)= p_3(v_3)\cdot p_3(v_i)$ for all $i=1,2$.
Since $q(v_0)\cdot q(v_3)\leq p_3(v_0)\cdot p_3(v_3)$ holds due to the metric inequality on 
$\{v_0, v_3, w\}$ (which follows because $G_3$ contains  $v_0w$ and $wv_3$), we obtain that $q(v_0)\cdot q(v_3)= p_3(v_0)\cdot p_3(v_3)$,
and that $q(v_3)\cdot q(v_i)= p_3(v_3)\cdot p_3(v_i)$ for $i=1,2$.
The lemma now follows from Lemma~\ref{lem:G2}.
\end{proof}

Now consider a subdivision $G_4$ of the complete graph $K_4$. 
We denote   the center vertex by $v_0$ and   the other three vertices of $K_4$ by $\{v_1, v_2, v_3\}$ as shown in Figure~\ref{fig:G4}(a).
Take any edge $w_1w_2$ in the path between $v_1$ and $v_2$ such that   $w_1$ is  closer to $v_1$ than $w_2$.
Also let $w_3$ be the vertex next to $v_0$ on the path from $v_0$ to $v_3$.
(Note that $w_i$ may be $v_i$.)
See Figure~\ref{fig:G4}(a).
We define $(G_4, q_4)$ as follows:
\begin{itemize}
\setlength{\parskip}{0.cm} 
 \setlength{\itemsep}{0.05cm} 
\item $p_4(w_1)={\bf e}_1, p_4(w_2)={\bf e}_2, p_4(w_3)={\bf e}_3$.
\item $p_4(v_0)$ is the midpoint of the spherical line segment between $p_4(w_1)$ and $p_4(w_2)$.
\item  If $v_3\neq w_3$, $p_4(v_3)$ is the midpoint of the spherical line segment between $p_4(v_0)$ and $p_4(w_3)$.
\item If $v_i\neq w_i\ (i=1,2)$, $p_4(v_i)$ lies on the spherical line segment between $p_4(v_0)$ and $p_4(w_i)$ such that the distance between $p_4(w_i)$ and $p_4(v_i)$ is equal to $\epsilon$ for some small 
$\epsilon>0$.
\item For other vertices $u$ (having degree two), $p_4(u)$ is placed by subdividing the corresponding spherical line segment.
\end{itemize}
See Figure~\ref{fig:G4}(b).
By Lemma~\ref{lem:G3} and the metric inequality,  we have the following.
\begin{lemma}
\label{lem:G4}
${\rm P}(G_4, p_4)$ has a unique solution.
\end{lemma}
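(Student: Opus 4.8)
The plan is to mimic the structure of the proof of Lemma~\ref{lem:Wn}, using Lemma~\ref{lem:G3} (via Lemma~\ref{lem:G2}) to handle the subdivided/split edges, and the metric inequality to handle the remaining ``fan'' of the subdivided wheel. First I would observe that $G_4$ is built from $K_4$ by subdividing edges, so it consists of: a center $v_0$, a ``distinguished'' path from $v_0$ through $w_3$ to $v_3$, and two paths from $v_0$ through $w_1$ (resp. $w_2$) out to $v_1$ (resp. $v_2$), together with a path from $v_1$ to $v_2$ passing through the edge $w_1w_2$. The placement $p_4$ puts $w_1,w_2,w_3$ at an orthonormal frame $\mathbf{e}_1,\mathbf{e}_2,\mathbf{e}_3$, puts $v_0$ at the midpoint of the arc $p_4(w_1)p_4(w_2)$, and $v_3$ at the midpoint of the arc $p_4(v_0)p_4(w_3)$; all other degree-two vertices lie on the spherical geodesics joining their neighbors, so every such geodesic segment is ``tight'' in the metric sense.

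The key steps, in order: (1) Take any solution $\mathrm{Gram}(q)$ of ${\rm P}(G_4,p_4)$. Along each path that is geodesic in $p_4$, the metric inequality applied cycle-by-cycle (exactly as in Lemma~\ref{lem:G3}) forces $q(a)\cdot q(b)\ge p_4(a)\cdot p_4(b)$ for the endpoints $a,b$ of the path, and the reverse inequality where there is another path (hence a cycle) joining those endpoints through shorter pieces. (2) The subgraph on $\{v_0,v_3,w_3\}$ together with the path from $v_1$ through $w_1$, $w_2$ to $v_2$ realizes the situation of $G_3$: the arcs $p_4(v_0)p_4(w_1)p_4(v_1)$-side and $p_4(v_0)p_4(w_2)p_4(v_2)$-side play the role of the subdivided segments $p_2(v_3)p_2(v_1)$ and $p_2(v_3)p_2(v_2)$ in $(G_3,p_3)$, so Lemma~\ref{lem:G3} pins down the inner products among $q(v_0),q(w_1),q(w_2),q(w_3),q(v_3)$ and hence (being a tight cycle) all of them on that part. (3) The remaining freedom is only in the endpoints $v_1,v_2$, placed within distance $\epsilon$ of $w_1,w_2$; but $v_1$ is joined to $v_0$, to $w_1$ (two edges into an already-determined line through $v_0$) and onto the $v_1$-$v_2$ path, so $q(v_1)$ is forced up to orthogonal transformation, and likewise $q(v_2)$; the $v_1$-$v_2$ path then closes up a tight cycle. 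Assembling these, $q=p_4$ up to a global orthogonal transformation, i.e. ${\rm P}(G_4,p_4)$ has a unique solution.

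The main obstacle I expect is step~(2)/(3): correctly verifying that the reduction to Lemma~\ref{lem:G3} goes through for \emph{all} of the allowed positions of $w_1,w_2,w_3$ (which may coincide with $v_1,v_2,v_3$ or may be interior path vertices) and for the $\epsilon$-perturbed placement of $v_1,v_2$. In particular one must check that $v_3$, being connected to $\{v_0,w_3\}$ by two edges along a line, is determined ``up to orthogonal transformation'' and that the auxiliary edge $v_0v_3$ (or $v_0w_3$) creates the tight triangle needed, just as the added edge $v_0v_3$ did in Lemmas~\ref{lem:G2} and~\ref{lem:G3}. The metric-inequality bookkeeping along the two long fan-paths — showing the chain of inequalities $q(v_0)\cdot q(v_i)\ge p_4(v_0)\cdot p_4(v_i)$ from geodesic concatenation and $\le$ from the triangle through $w_3$ — is routine but must be done carefully so that equality propagates back to every intermediate inner product; once that is in place the uniqueness follows by invoking Lemma~\ref{lem:G3} as a black box and then closing the last tight cycle on the $v_1$-$v_2$ path.
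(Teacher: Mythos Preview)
Your overall approach---reduce to Lemma~\ref{lem:G3} via metric inequalities along the geodesic paths---is exactly what the paper does (its proof is a single sentence). But your execution of step~(2) has a real gap.

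First, the identification is garbled: you write that the arcs $p_4(v_0)p_4(w_i)p_4(v_i)$ ``play the role of the subdivided segments $p_2(v_3)p_2(v_1)$ and $p_2(v_3)p_2(v_2)$ in $(G_3,p_3)$''. This swaps the roles of $v_0$ and $v_3$. The correct dictionary sends $G_3$'s $(v_0,v_1,v_2,v_3,w)$ to $G_4$'s $(v_0,w_1,w_2,v_3,w_3)$, with the subdivided $G_3$-segments $v_3v_i$ corresponding to the subdivided $G_4$-edges $v_3v_i$, not to arcs emanating from $v_0$.

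Second, even with the right dictionary you cannot invoke Lemma~\ref{lem:G3} as a black box on $\{v_0,w_1,w_2,v_3,w_3\}$ and only afterwards handle $v_1,v_2$. The $G_3$-argument requires a lower bound $q(v_3)\cdot q(w_i)\geq p_4(v_3)\cdot p_4(w_i)$, which would come from a geodesic path $v_3\to w_i$. In $(G_4,p_4)$ the only such path goes through $v_i$, and when $v_i\neq w_i$ the concatenation $v_3\to v_i\to w_i$ is \emph{not} geodesic: $p_4(v_i)$ lies in $\operatorname{span}\{{\bf e}_1,{\bf e}_2\}$, while the great circle through $p_4(v_3)$ and $p_4(w_i)={\bf e}_i$ meets that plane only at $\pm{\bf e}_i$. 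So the needed inequality is unavailable at this stage.

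The repair is to reverse your steps~(2) and~(3). The two geodesic paths $v_0\to v_i\to w_i$ together with the edge $w_1w_2$ already form a tight cycle (total length $\pi/4+\pi/4=\pi/2=\operatorname{dist}(p_4(w_1),p_4(w_2))$), which simultaneously pins down $q(v_0),q(v_1),q(v_2),q(w_1),q(w_2)$ and all intermediate path vertices. \emph{Then} run the proof of Lemma~\ref{lem:G3} with $v_0,v_1,v_2$ (now fixed) as the base triangle and the genuine geodesic paths $v_3\to v_i$; note that $p_4(v_0)$ is a positive linear combination of $p_4(v_1),p_4(v_2)$ rather than their midpoint, but that is precisely what the inequality step in the $G_3$-proof actually uses, so it still yields $q(v_0)\cdot q(v_3)\geq p_4(v_0)\cdot p_4(v_3)$. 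The opposite inequality from the path $v_0\text{--}w_3\text{--}\cdots\text{--}v_3$ then closes the argument.
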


We are now ready to prove Lemma~\ref{lem:nonsingular}.
\begin{proof}[Proof of Lemma~\ref{lem:nonsingular}]
Take an inclusionwise minimal induced subgraph $H$ of $G$ which is either $W_n \ (n\geq 5)$ or a proper splitting of $W_n\ (n\geq 4)$.
It can be easily shown by the minimality of $H$ that 
$H$ is (i) a subdivision of $W_n (n\geq 4)$ with $H\neq K_4$ or 
(ii) a subdivision of the graph obtained from $W_n (n\geq 4)$ by splitting the center vertex once.
In view of Lemma~\ref{lem:nondegenerate_induced_subgraph}, it suffices to show the statement for $G=H$.

\medskip
\noindent
Case 1: Suppose that $G$ is a subdivision of $W_n\ (n\geq 4)$ with $G\neq K_4$. 
Denote $G$ by $G_5$ in this case, and we show ${\sf sd}^*(G_5)\geq 2$.
We use $\{u_0, u_1, \dots, u_{n-1}\}$ to denote the vertex of degree more than two in $G_5$ as shown in Figure~\ref{fig:G4}(c), where 
$u_0$ denotes the center vertex.
By identifying $u_i$ with $v_i$ for $i=0, 1,2,3$,
$G_5$ contains  a subdivision $G_4$ of $K_4$.
We extend spherical framework $(G_4,p_4)$ to $(G_5, p_5)$ 
by realizing the remaining paths (i.e., the paths from $u_0$ to $u_i$ for $4\leq i \leq n-1$) by straight line segments as shown in Figure~\ref{fig:G4}(d). 
Since $G_5$ is a subdivision of $W_n\ (n\geq 4)$ with $G_5\neq K_4$, 
 there is at least one vertex $v^*$ on the path between $u_1$ and $u_3$, (either $u_4$ or a vertex obtained by a subdivision).
Since $(G_5, p_5)$ contains $(G_4, p_4)$ as a subframework and the remaining paths are realized by straight lines, 
${\rm P}(G_5, p_5)$ has a unique solution by Lemma~\ref{lem:G4}.
This implies that at least $u_3$ or $v^*$ is stressed at the final stage of the facial reduction to ${\rm P}(G_5, p_5)$.
However, applying the analysis of Lemma~\ref{lem:Wn}, we conclude that
neither $u_3$ nor $v^*$ is stressed at the first stage.
Thus ${\sf sd}^*(G_5)\geq 2$.

\begin{figure}
\centering
\begin{minipage}{0.24\textwidth}
\centering
\includegraphics[scale=0.5]{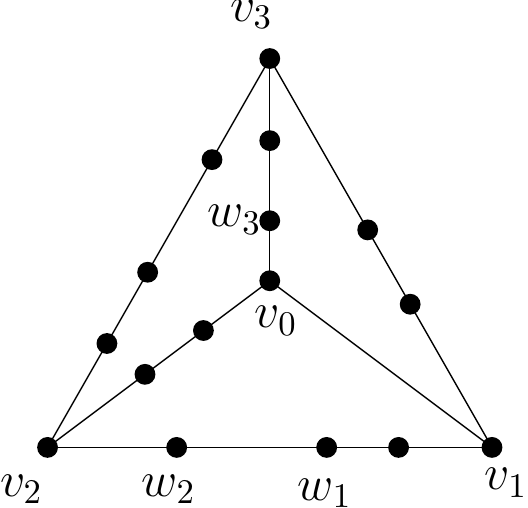}
\par
(a) $G_4$
\end{minipage}
\begin{minipage}{0.24\textwidth}
\centering
\includegraphics[scale=0.5]{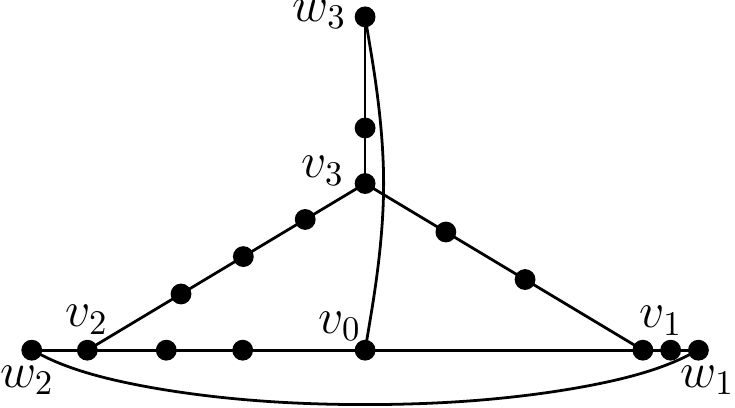}
\par
(b) $(G_4,p_4)$
\end{minipage}
\begin{minipage}{0.24\textwidth}
\centering
\includegraphics[scale=0.5]{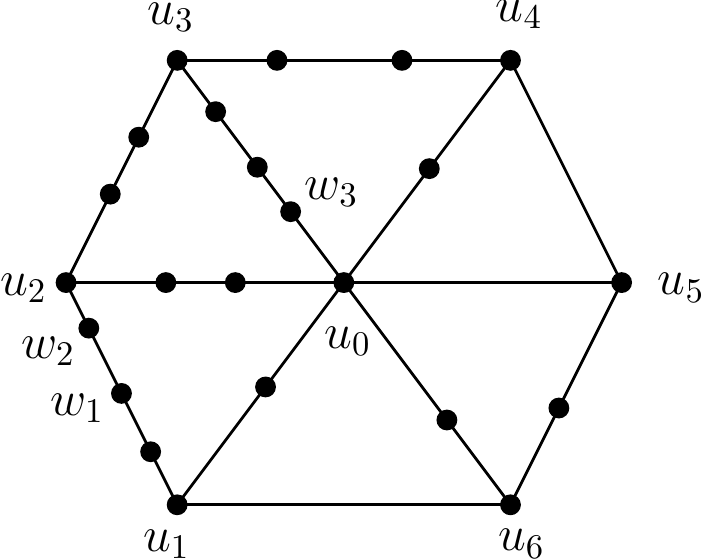}
\par
(c) $G_5$
\end{minipage}
\begin{minipage}{0.24\textwidth}
\centering
\includegraphics[scale=0.5]{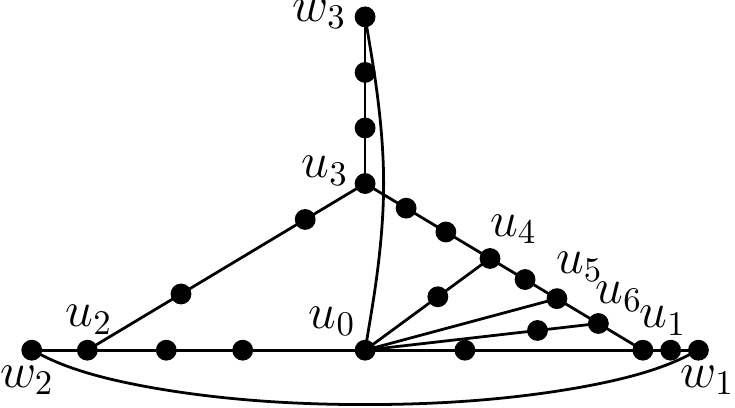}
\par
(d) $(G_5, p_5)$
\end{minipage}

\begin{minipage}{0.24\textwidth}
\centering
\includegraphics[scale=0.5]{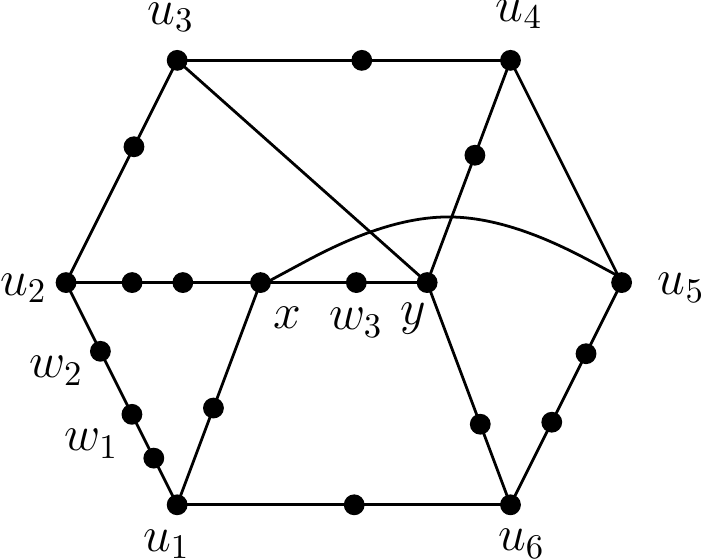}
\par
(e) $G_6$
\end{minipage}
\begin{minipage}{0.24\textwidth}
\centering
\includegraphics[scale=0.5]{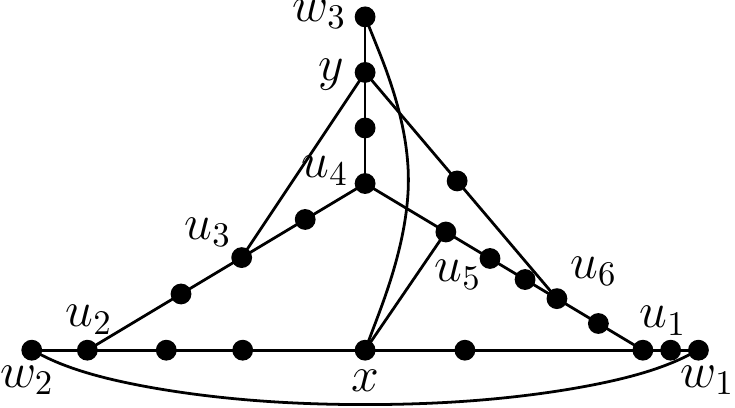}
\par
(f) $(G_6, p_6)$
\end{minipage}
\begin{minipage}{0.24\textwidth}
\centering
\includegraphics[scale=0.5]{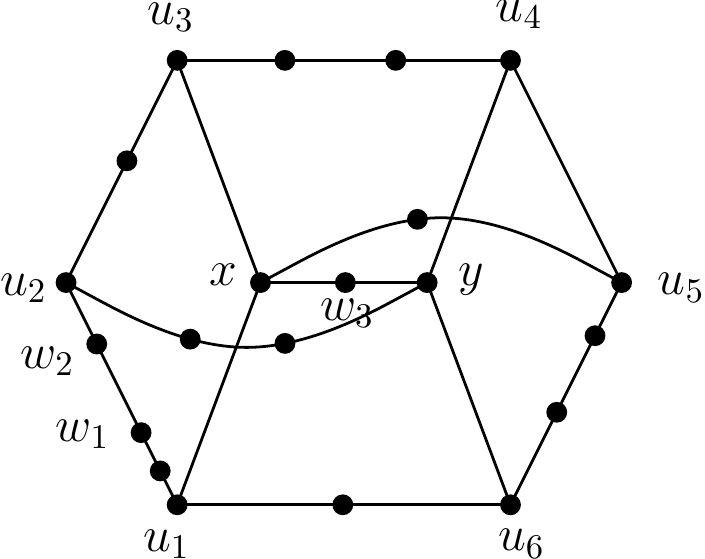}
\par
(g) $G_7$
\end{minipage}
\begin{minipage}{0.24\textwidth}
\centering
\includegraphics[scale=0.5]{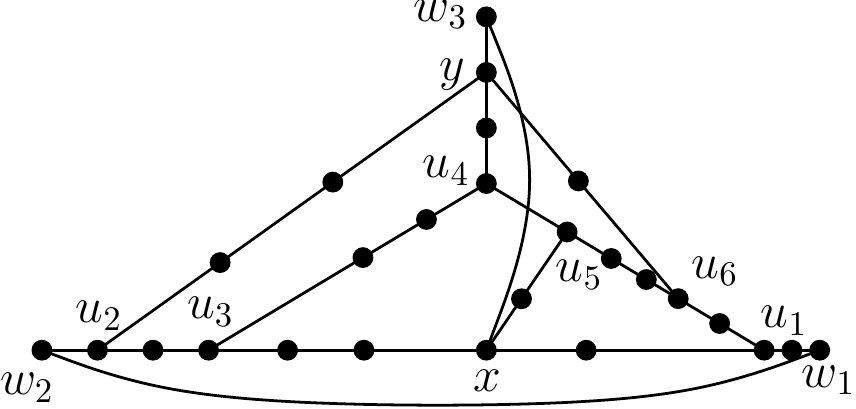}
\par
(h) $(G_7, p_7)$
\end{minipage}
\caption{(a)  A subdivision $G_4$ of $K_4$, (c) a subdivision $G_5$ of $W_7$, and (e)(g) subdivisions $G_6$ and $G_7$ of splittings of $W_7$. (b)(d)(f)(h) Projections of $(G_4, p_4)$, $(G_5, p_5)$, $(G_6, p_6)$, and $(G_7, p_7)$ to the plane, respectively.}
\label{fig:G4}
\end{figure}

\medskip
\noindent
Case 2: Suppose that $G$ is a subdivision of a graph obtained from $W_n$ by splitting the center vertex.
Let $x$ and $y$ be the vertices of $G$ obtained as a result of splitting the center vertex $u_0$.
Also let $G'$ be the graph obtained from $G$ by smoothing all the degree-two vertices
(that is, replacing each degree-two vertex $v$ and the edges incident to $v$ with an edge between the neighbors of $v$). We have the following two subcases.

\noindent
(case 2-1) Suppose that $G'$ has a triangle, say $xu_1u_2$ as in Figure~\ref{fig:G4}(e). 
Denote $G$ by $G_6$ in this case.
Take any path from $x$ to some $u_j$ with $j\neq 1,2$ so that it passes through $y$.
(In Figure~\ref{fig:G4}(e), we take $j=4$.) 
By identifying $u_i$ with $v_i$ for $i=1,2$, $u_j$ with $v_3$, and $x$ with $v_0$,  
$G_6$ contains $G_4$ as a subgraph, and spherical framework $(G_4, p_4)$ can be extended to $(G_6, p_6)$ by realizing the remaining paths by straight lines shown in Figure~\ref{fig:G4}(f).
Then ${\rm P}(G_6, p_6)$ has a unique solution by Lemma~\ref{lem:G4},
and at least $y$ or $v_j$ must be stressed at the final stage.
However,  since every vertex except $x$ and $y$ have degree at most three, 
the equilibrium condition implies that neither $y$ nor $u_j$ is stressed at the first stage, 
implying that ${\sf sd}^*(G_6)\geq 2$.

\noindent
(case 2-2) Suppose that $G'$ has no triangle as in Figure~\ref{fig:G4}(g). 
In this case $n\geq 5$.
Denote $G$ by $G_7$ in this case.
We may assume that $xu_1u_2u_3$ forms an induced cycle of length four.
Since $n\geq 5$, we can take a path from $x$ to some $u_j$ with $j\neq 1,2, 3$ so that it passes through $y$.
By identifying $u_1$ with $v_1$, $u_3$ with $v_2$,  
$u_j$ with $v_3$, and $x$ with $v_0$, $G_7$ contains $G_4$, and spherical framework $(G_4, p_4)$ can be extended to $(G_7, p_7)$ by realizing the remaining paths by straight lines.
Then ${\rm P}(G_7, p_7)$ has a unique solution by Lemma~\ref{lem:G4}.
Hence by the same reason as that for $(G_6, p_6)$  
$y$ or $u_j$ must be stressed at the final stage but neither $y$ nor $u_j$ is stressed at the first stage, 
implying that ${\sf sd}^*(G_7)\geq 2$.

This completes the proof of Lemma~\ref{lem:nonsingular} as well as that of Theorem~\ref{thm:nonsingular}. 
\end{proof}

\section{Bounding the singularity degree by two}
\label{subsec:two}

In this section we identify a large class of graphs whose singularity degree is bounded by two.
The following Lemma~\ref{lem:contraction2} establishes a connection  between ${\sf sd}(G,\Sigma)$ and ${\sf sd}^*(G,\Sigma)$.
For the proof we need the following two technical lemmas.

\begin{lemma}
\label{lem:contraction}
Let $(G,\Sigma)$  be a signed graph and $c\in {\cal E}(G,\Sigma)$.
Let $F=\{e\in E(G)\setminus \Sigma \mid c(e)=1\}$, $(G/ F, \Sigma)$ be the signed graph from $(G,\Sigma)$ obtained by contracting $F$, and $\tilde{c}$ be the restriction of $c$ to $E(G/F)$.
Then $\tilde{c}\in {\cal E}(G/F, \Sigma)$ and ${\sf sd}(G,\Sigma, c)\leq {\sf sd}(G/F,\Sigma,\tilde{c})+1$.
\end{lemma}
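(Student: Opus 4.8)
The plan is to build a facial reduction sequence for $\mathrm{P}(G,\Sigma,c)$ of length at most $\mathrm{sd}(G/F,\Sigma,\tilde c)+1$ by taking the sequence that certifies the singularity degree of the contracted problem and prepending one extra stress that ``kills'' the contracted edges $F$. First I would record the trivial geometric fact underlying the contraction: if $e=ij\in E(G)\setminus\Sigma$ and $c(e)=1$, then any feasible $X$ of $\mathrm{P}(G,\Sigma,c)$ satisfies $X[i,i]=X[j,j]=1=X[i,j]$, hence $X({\bm e}_i-{\bm e}_j)=0$, i.e. ${\bm e}_i-{\bm e}_j\in\ker X$. Consequently, for a Gram representation $X=P^\top P$ the columns $p(i)$ and $p(j)$ coincide, which is exactly why the feasible sets of $\mathrm{P}(G,\Sigma,c)$ and $\mathrm{P}(G/F,\Sigma,\tilde c)$ are in natural correspondence; in particular $\tilde c\in{\cal E}(G/F,\Sigma)$ follows immediately, and maximum-rank solutions correspond with the rank dropping by exactly the number of edges contracted (more precisely, by $|V(G)|-|V(G/F)|$, counting each contracted connected component of $F$ once).

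Next I would construct the first stress $\omega^0$ for $\mathrm{P}(G,\Sigma,c)$. Let $W=\operatorname{span}\{{\bm e}_i-{\bm e}_j : ij\in F\}$ and take $\Omega^0$ to be the orthogonal projection onto $W$ (equivalently $\sum$ of $({\bm e}_i-{\bm e}_j)({\bm e}_i-{\bm e}_j)^\top$ over a spanning forest of $F$, suitably scaled, so that $\Omega^0\succeq 0$). The associated $\omega^0\in\mathbb{R}^{V\cup E}$ has $\omega^0(e)<0$ only on edges $e\in F\subseteq E\setminus\Sigma$ (diagonal entries positive), so $\omega^0$ is properly signed; $\Omega^0\succeq 0$ gives nested-PSD-ness at the first step; and since every edge in $F$ is tight with $c(e)=1$ for feasible $X$, the value $\sum_i\omega^0(i)+\sum_e\omega^0(e)c(e)=\langle\Omega^0,X\rangle=0$, giving (c3). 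After this one step, ${\cal V}^1$ is the orthogonal complement of $W$, which is canonically identified with $\mathbb{R}^{V(G/F)}$, and the restricted problem $\mathrm{P}^1$ is (linearly isomorphic to) $\mathrm{P}(G/F,\Sigma,\tilde c)$.

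Then I would append the facial reduction sequence $\omega^1,\dots,\omega^k$ ($k=\mathrm{sd}(G/F,\Sigma,\tilde c)$) that certifies the singularity degree of $\mathrm{P}(G/F,\Sigma,\tilde c)$, lifted back to $G$ by padding with zeros on the vertices/edges of $F$. Properties (c1)(c2)(c3) transfer verbatim because all the lifted $\Omega^j$ vanish on $W$ and act on ${\cal V}^1\cong\mathbb{R}^{V(G/F)}$ exactly as in the contracted problem, so nested-PSD-ness of $\{\Omega^0,\Omega^1,\dots,\Omega^k\}$ reduces to nested-PSD-ness of $\{\Omega^1,\dots,\Omega^k\}$ over ${\cal V}^1$. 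For (c4): the rank of $\{\Omega^1,\dots,\Omega^k\}$ equals $|V(G/F)|-s'$ where $s'$ is the maximum rank in $\mathrm{P}(G/F,\Sigma,\tilde c)$, and $\Omega^0$ contributes exactly $\dim W=|V(G)|-|V(G/F)|$ to the rank, independently; summing gives rank $|V(G)|-s'$, and by the correspondence of maximum-rank solutions $s'$ is also the maximum rank in $\mathrm{P}(G,\Sigma,c)$, which is (c4). Hence $\{\omega^0,\omega^1,\dots,\omega^k\}$ is a valid certificate of length $k+1$, so $\mathrm{sd}(G,\Sigma,c)\le\mathrm{sd}(G/F,\Sigma,\tilde c)+1$.

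\textbf{Main obstacle.} The delicate point is the bookkeeping for (c4): one must be careful that ``rank of a nested sequence'' adds up correctly across the prepended step, i.e. that $\dim{\cal V}^{k+1}$ (computed in $G$) equals $\dim$(corresponding space in $G/F$), and that the maximum primal rank is preserved exactly — not merely bounded — under contraction of unit-weight edges. The fact in the first footnote of the paper (feasible sets of $\mathrm{P}(G,c)$ and $\mathrm{P}(G/ij,c)$ coincide) and the Gram-matrix argument sketched above handle this, but the signed/tensegrity setting with parallel edges requires checking that no edge of $\Sigma$ is accidentally among the unit-weight edges being contracted — which is exactly why $F$ is defined using only $E(G)\setminus\Sigma$ — and that resigning is irrelevant here since $F$ consists of even edges. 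Everything else is routine verification of (c1)--(c3).
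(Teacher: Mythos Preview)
Your approach is the same as the paper's: prepend a PSD stress $\Omega^0$ built from the rank-one terms $({\bm e}_u-{\bm e}_v)({\bm e}_u-{\bm e}_v)^\top$ for $uv\in F$, identify $\ker\Omega^0$ with $\mathbb{R}^{V(G/F)}$, and lift the facial reduction sequence of the contracted problem; the paper's lift sets $\omega^i(v)=\tilde\omega^i(v_j)/|X_j|$ for $v$ in the component $X_j$, whereas you pad with zeros, but either choice gives $x^\top\Omega^i x=\bar x^\top\tilde\Omega^i\bar x$ on $W^\perp$ and hence (c1)--(c4). Two small slips to clean up: the orthogonal projection onto $W$ is \emph{not} generally supported on $E(G)$ (if $F$ contains a path $1\!-\!2\!-\!3$ the projector has a nonzero $[1,3]$ entry even when $13\notin E(G)$), so it is not a valid stress --- stick with your spanning-forest sum, which is not ``equivalent'' to the projector but has the same kernel and is supported on $F$; and the lifted $\Omega^j$ do \emph{not} ``vanish on $W$'' under either lift (e.g.\ $({\bm e}_u-{\bm e}_v)^\top\Omega^j({\bm e}_u-{\bm e}_v)=2\tilde\omega^j(v_\ell)/|X_\ell|$ in the paper's lift), but this is irrelevant since nested PSD-ness only asks for positive semidefiniteness on ${\cal V}^{j-1}\subseteq W^\perp$.
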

\begin{proof}
Let $e=ij\in F$. 
Since $c(e)=1$, $X[i,j]=1$ and $X[i,k]=X[j,k]$ for any feasible $X$ and any $k\in V\setminus \{i,j\}$.
This implies that 
there is a one-to-one correspondence between the feasible region of ${\rm P}(G, \Sigma, c)$ and that of 
${\rm P}(G/e, \Sigma, \tilde{c})$.
Thus $\tilde{c}\in {\cal E}(G/F, \Sigma)$.

To see the second claim, take a sequence $\tilde{\omega}^1, \dots, \tilde{\omega}^k$ of stresses of $G/F$ obtained by the facial reduction to P$(G/F,\Sigma, \tilde{c})$.

Consider the graph $(V,F)$, and let  $X_1, \dots, X_t$ be the family of the vertex sets of the connected components of $(V,F)$.
We denote the vertex  corresponding to $X_i$ in $G/F$ by $v_i$.
Then for each $1\leq i\leq t$ we define a sequence $\omega^1, \dots, \omega^k$ of stresses of $G$ by 
\begin{align*}
\omega^i(v)&=\tilde{\omega}^i(v_j)/|X_j| \quad \text{ if $v\in X_j$} \\
\omega^i(e)&=\begin{cases}
0 & \text{ if $e$ is induced by some $X_j$} \\ 
\tilde{\omega}_i(e) & \text{ otherwise}
 \end{cases}
 \end{align*}
 We also define a stress $\omega^0$ of $G$ such that
 \[\Omega^0=\sum_{uv \in F }({\bm e}_{u}-{\bm e}_v)({\bm e}_u-{\bm e}_v)^{\top}.
 \]
 We show that the sequence $\omega^0, \omega^1, \dots, \omega^k$ 
 satisfies (c1)(c2)(c3)(c4) for ${\rm P}(G,\Sigma,c)$.
 
 To see this we first show that $\omega^0$ is properly signed PSD equilibrium stress.
 By definition, $\Omega^0\succeq 0$.
 Also it is properly signed since $\omega^0(e)<0$ only if $e\in F$.
 It satisfies (c3) because 
 $\sum_{v\in V(G)} \omega^0(v)=2|F|=-\sum_{e\in F} \omega^0(e) c(e)$ by $c(e)=1$ for $e\in F$.
 
 Next we examine the remaining of the sequence $\omega^1, \dots, \omega^k$. 
 Note that 
 for any $x\in \mathbb{R}^{V}$ we have  
 \begin{align*}
 x^{\top} \Omega^0 x=0   \Leftrightarrow  \Omega^0 x=0
 \Leftrightarrow  x\in \bigcap_{uv\in F}{\rm ker}\ ({\bm e}_{u}-{\bm e}_v)({\bm e}_u-{\bm e}_v)^{\top}  \Leftrightarrow x(u)=x(v) \text{ for every $uv\in F$}. 
 \end{align*}
 In other words, 
 $\{x\in \mathbb{R}^V\mid x^{\top} \Omega^0 x=0\}$ is equal to 
 \[
 \{x\in \mathbb{R}^V\mid 
 \text{$x(u)=x(v)$ if $u$ and $v$ belong to the  same component in graph $(V,F)$}\}.
 \]
 This linear space is naturally identified with $\mathbb{R}^{V(G/F)}$,
 and the fact that $\omega^0,\dots, \omega^k$ satisfies (c1)(c2)(c3)(c4) follows from 
 the fact that $\tilde{\omega}^1,\dots, \tilde{\omega}^k$ satisfies (c1)(c2)(c3)(c4).
  \end{proof}
 
 Recall resigning operations for signed graphs defined in Subsection~\ref{subsec:K4}.
 We define the corresponding operation for edge vectors as follows.
 Let $(G,\Sigma)$ be a signed graph and  $S\subseteq V(G)$.
For $c\in [-1,1]^{E}$,   the {\em resigning} $c^S\in [0,1]^{E}$ of $c$ with respect to $S\subseteq V$ is defined by 
$c^S(e)=-c(e)$ if $e \in \delta(S)$ and
$c^S(e)=c(e)$ if $e\notin \delta(S)$.
Observe that $c\in {\cal E}(G,\Sigma)$ if and only if $c^S\in {\cal E}(G,\Sigma\Delta \delta(S))$.

 \begin{lemma}
\label{lem:switching}
Let $(G,\Sigma)$ be a signed graph, $c\in [-1,1]^{E(G)}$, and $S\subseteq V(G)$.
Then ${\sf sd}(G,\Sigma, c)={\sf sd}(G, \Sigma \Delta \delta(S), c^S)$.
\end{lemma}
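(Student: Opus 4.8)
The plan is to exhibit an explicit bijection between facial reduction sequences for ${\rm P}(G,\Sigma,c)$ and those for ${\rm P}(G,\Sigma\Delta\delta(S),c^S)$ that preserves all of the conditions (c1)--(c4). The natural candidate is conjugation by the diagonal sign matrix $D_S$ defined by $D_S[i,i]=-1$ if $i\in S$ and $D_S[i,i]=1$ otherwise; note $D_S$ is orthogonal and $D_S^2=I$. First I would check how $D_S$ acts on the primal side: if $X$ is feasible for ${\rm P}(G,\Sigma,c)$, then $X':=D_S X D_S$ is positive semidefinite, has $X'[i,i]=X[i,i]=1$, and satisfies $X'[i,j]=\pm X[i,j]$ with the sign being $-1$ exactly when $ij\in\delta(S)$; hence $X'[i,j]$ is constrained by $c^S(ij)$ with the inequality direction flipped precisely on $\delta(S)$, which is exactly what the sign set $\Sigma\Delta\delta(S)$ prescribes. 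So $X\mapsto D_S X D_S$ is a rank-preserving bijection between the two feasible regions, and in particular it maps maximum-rank solutions to maximum-rank solutions.

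Next I would handle the dual/stress side. Given a stress $\omega\in\mathbb{R}^{V\cup E}$ for $(G,\Sigma)$, define $\omega^S$ by $\omega^S(i)=\omega(i)$ for $i\in V$, $\omega^S(e)=-\omega(e)$ for $e\in\delta(S)$, and $\omega^S(e)=\omega(e)$ otherwise — i.e. the same resigning rule applied to the edge part. Then a direct computation with the $E_{ij}$ basis gives $\Omega^S = D_S\,\Omega\,D_S$ (the diagonal part is unchanged because $D_S$ squares to the identity on $E_{ii}$, and each off-diagonal $E_{ij}$ picks up the sign $(D_S[i,i])(D_S[j,j])$, which is $-1$ iff exactly one of $i,j$ lies in $S$, i.e. iff $ij\in\delta(S)$). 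From $\Omega^S=D_S\Omega D_S$ and orthogonality of $D_S$ it follows that $\Omega^S\succeq 0$ iff $\Omega\succeq 0$, and more generally $\Omega^S$ is positive semidefinite on a subspace $W$ iff $\Omega$ is positive semidefinite on $D_S W$. Likewise $\omega^S$ is properly signed for $(G,\Sigma\Delta\delta(S))$ iff $\omega$ is properly signed for $(G,\Sigma)$, since the flip of the sign constraint on $\delta(S)$ is matched by the flip of the value of $\omega$ on $\delta(S)$.

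The remaining step is to push this through a whole sequence. Given $\{\omega^1,\dots,\omega^k\}$ for $(G,\Sigma,c)$, set $\{(\omega^1)^S,\dots,(\omega^k)^S\}$; since $(\Omega^j)^S=D_S\Omega^j D_S$, an easy induction shows the associated subspaces satisfy $\mathcal V^j_{(G,\Sigma\Delta\delta(S))}=D_S\,\mathcal V^j_{(G,\Sigma)}$, so the nested-PSD property (c2) transfers, and $\dim\mathcal V^k$ is unchanged, giving (c4) once we note the maximum primal rank is preserved by the map $X\mapsto D_S X D_S$ above. Condition (c1) is the properly-signed statement just discussed, and (c3) follows because $\sum_i \omega^S(i)+\sum_e \omega^S(e)c^S(e) = \sum_i\omega(i)+\sum_e\omega(e)c(e)$, each resigned edge contributing $(-\omega(e))(-c(e))=\omega(e)c(e)$. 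Thus the map sends valid sequences to valid sequences of the same length, and since $S\mapsto(\cdot)^S$ is an involution it is a bijection; therefore ${\sf sd}(G,\Sigma,c)={\sf sd}(G,\Sigma\Delta\delta(S),c^S)$. I do not anticipate a serious obstacle here; the only point requiring a little care is the bookkeeping that $\Omega^S=D_S\Omega D_S$ and the parallel fact that the primal resigning $X\mapsto D_SXD_S$ respects the (in)equality constraints with the flipped sign set, so that ranks of maximum-rank primal solutions match on both sides.
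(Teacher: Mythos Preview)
Your proposal is correct and follows essentially the same approach as the paper: both arguments conjugate by the diagonal sign matrix $D_S$, observe that $X\mapsto D_SXD_S$ is a rank-preserving bijection of feasible regions and that $\Omega\mapsto D_S\Omega D_S$ carries stress matrices to stress matrices with the resigned support, and then verify (c1)--(c4) transfer. Your write-up is in fact a bit more explicit than the paper's (e.g.\ you spell out the identity $\sum_i\omega^S(i)+\sum_e\omega^S(e)c^S(e)=\sum_i\omega(i)+\sum_e\omega(e)c(e)$ for (c3) and the relation $\mathcal V^j_{(G,\Sigma\Delta\delta(S))}=D_S\,\mathcal V^j_{(G,\Sigma)}$ for (c2) and (c4)), so nothing further is needed.
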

\begin{proof}
Let $X$ be a maximum rank solution of ${\rm P}(G, \Sigma, c)$ and $\omega^1,\dots, \omega^k$ be the sequence obtained by the facial reduction to ${\rm P}(G,\Sigma, c)$.
Let $D$ be a diagonal matrix of size $|V(G)|$ whose $i$-the diagonal entry is $-1$ if $i\in S$ and $1$ if $i\in V(G)\setminus S$.
Observe that $DXD$ is feasible in ${\rm P}(G, \Sigma \Delta \delta(S), c^S)$.
Also $D\Omega^iD$ is the stress matrix of a stress $\tilde{\omega}^i$ supported on $E(G)$.
We claim that $\tilde{\omega}^1,\dots, \tilde{\omega}^k$ form a sequence of stresses satisfying (c1)(c2)(c3)(c4).
Indeed, $\tilde{\omega}^i(uv)=-\tilde{\omega}^i(uv)$ if $uv\in \delta(S)$ and otherwise
 $\tilde{\omega}^i(uv)=\tilde{\omega}^i(uv)$. Hence (c1) holds.
 (c2) trivially holds since $\tilde{\Omega}^i=D\Omega^iD$ and $\Omega^i$ satisfies (c2).
 To check (c3), one can directly check (\ref{eq:semi-equilibrium}) for any $p^S$ with ${\rm Gram}(p^S)\in  {\rm P}(G, \Sigma \Delta \delta(S), c^S)$.
  (c4) holds since each $\tilde{\Omega}^i$ is obtained from 
   $\Omega^i$ by the same coordinate change for all $i$.
   
  Applying the same argument starting from ${\rm P}(G, \Sigma\Delta \delta(S), c^S)$, we also have the opposite direction.
  Thus we get ${\sf sd}(G,\Sigma, c)={\sf sd}(G, \Sigma \Delta \delta(S), c^S)$.
\end{proof}

 We say that $(G, \Sigma)$ is a {\em splitting} of $(G',\Sigma')$ if $(G',\Sigma')$ can be obtained from $(G,\Sigma)$ by a sequence of resigning and contraction of even edges.
 Combining Lemma~\ref{lem:contraction} with resigning operations we have the following.
 \begin{lemma}
 \label{lem:contraction2}
Let $(G,\Sigma)$  be a signed graph.
Then we have
\begin{equation}
\label{eq:contraction2}
{\sf sd}(G, \Sigma)\leq 
\max\left\{{\sf sd}^*(H,\Sigma_H)+1 : 
\text{ $(G,\Sigma)$ is a splitting of $(H, \Sigma_H)$}  
\right\}
\end{equation}
\end{lemma}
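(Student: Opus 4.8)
The plan is to prove the inequality \eqref{eq:contraction2} by reducing, via resigning, to the already-established Lemma~\ref{lem:contraction}. Fix an arbitrary $c\in{\cal E}(G,\Sigma)$; it suffices to bound ${\sf sd}(G,\Sigma,c)$ by the right-hand side of \eqref{eq:contraction2}. The idea is that the set $F$ of ``degenerate'' edges of $c$ on which $c$ attains the extreme value $1$ (in the even case) or $-1$ (in the odd case) can be contracted after first resigning so that all such edges become even with value $1$.

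First I would build the resigning set. Let $F^+=\{e\in E(G)\setminus\Sigma\mid c(e)=1\}$ and $F^-=\{e\in\Sigma\mid c(e)=-1\}$; the degenerate edges of $c$ (in the sense of Subsection~\ref{subsec:K4}) are exactly $F^+\cup F^-$. I claim one can choose $S\subseteq V(G)$ such that, after resigning on $S$, every edge of $F^+\cup F^-$ is an even edge with $c^S$-value $1$. Indeed, resigning on $S$ flips the sign of an edge and negates its weight precisely when the edge lies in $\delta(S)$; so for a single edge the pair (parity, sign of extreme value) is preserved. Thus an edge in $F^-$ (odd, value $-1$) becomes, when put in $\delta(S)$, even with value $1$, while an edge in $F^+$ (even, value $1$) should be kept out of $\delta(S)$. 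So I want $S$ with $F^-\subseteq\delta(S)$ and $F^+\cap\delta(S)=\emptyset$. Such an $S$ need not exist for an arbitrary signed graph, so the right move is to argue that one may assume $F^+\cup F^-$ induces a forest-like structure, or more simply to handle the obstruction directly: if some cycle has an odd number of edges in $F^-$ and none in $F^+$, a short argument shows $c$ cannot be feasible (the metric inequality of Theorem~\ref{thm:signed}, or directly: a cycle all of whose internal products are forced to be $\pm1$ with the wrong parity gives a contradiction in $\mathbb{S}^{d-1}$), so feasibility already rules this out; analogously for mixed cycles, feasibility forces consistency. Hence the desired $S$ exists whenever $c$ is feasible.

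Given such $S$, apply Lemma~\ref{lem:switching} to get ${\sf sd}(G,\Sigma,c)={\sf sd}(G,\Sigma\Delta\delta(S),c^S)$. Now $c^S$ is an edge weight on $(G,\Sigma\Delta\delta(S))$ whose set of even edges of value $1$ is exactly $F:=F^+\cup F^-$, and $c^S$ has no other degenerate edges. By Lemma~\ref{lem:contraction}, ${\sf sd}(G,\Sigma\Delta\delta(S),c^S)\le{\sf sd}((G/F,\Sigma\Delta\delta(S)),\tilde c)+1$, where $\tilde c$ is the restriction of $c^S$ to $E(G/F)$ and, crucially, $\tilde c$ is nondegenerate on $(G/F,\Sigma\Delta\delta(S))$ by construction. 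Therefore ${\sf sd}((G/F,\Sigma\Delta\delta(S)),\tilde c)\le{\sf sd}^*(G/F,\Sigma\Delta\delta(S))$. Finally, $(G/F,\Sigma\Delta\delta(S))$ is obtained from $(G,\Sigma)$ by a resigning followed by contractions of even edges, which is exactly the definition of $(G,\Sigma)$ being a splitting of $(G/F,\Sigma\Delta\delta(S))$; so this signed graph is among those $(H,\Sigma_H)$ ranged over on the right-hand side of \eqref{eq:contraction2}. Chaining the inequalities yields ${\sf sd}(G,\Sigma,c)\le{\sf sd}^*(G/F,\Sigma\Delta\delta(S))+1$, and taking the max over $c$ completes the proof.

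The main obstacle I expect is the existence of the resigning set $S$: one must verify that the only way the combinatorial conditions $F^-\subseteq\delta(S)$, $F^+\cap\delta(S)=\emptyset$ can fail is through a cycle whose $F^+/F^-$ pattern has bad parity, and then show such a cycle is incompatible with feasibility of $c$ (so it cannot occur for $c\in{\cal E}(G,\Sigma)$). This is a clean argument using the odd-cycle metric inequalities from Theorem~\ref{thm:signed}, but it is the one place where the ``feasible $c$'' hypothesis is genuinely used, and it deserves to be spelled out carefully rather than waved through.
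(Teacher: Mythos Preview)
Your proposal is correct and follows essentially the same route as the paper: define $F^+,F^-$, use the metric inequality of Theorem~\ref{thm:signed} (equivalently, the direct geometric parity argument you mention) to show that feasibility forces $F^-$ to be bipartite in $G/F^+$ and hence the resigning set $S$ with $F^-\subseteq\delta(S)$ and $F^+\cap\delta(S)=\emptyset$ exists, then apply Lemma~\ref{lem:switching} followed by Lemma~\ref{lem:contraction} to the even degenerate edges $F=F^+\cup F^-$. The only cosmetic difference is that the paper phrases the obstruction as ``$F^-$ induces a bipartite graph in $G/F^+$'' rather than your cycle-parity formulation, but these are equivalent.
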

\begin{proof}
Take any $c\in {\cal E}(G,\Sigma)$ with ${\sf sd}(G,\Sigma)={\sf sd}(G,\Sigma,c)$.
Then by Theorem~\ref{thm:signed} we have ${\rm arccos}(c)/\pi\in {\rm MET}(G,\Sigma)$.
Let $F^+=\{e\in E\setminus \Sigma: c(e)=1\}$ and $F^-=\{e\in \Sigma: c(e)=-1\}$.
Then observe that $F^-$ (edge-)induces a bipartite graph in $G/F^+$ since otherwise $G$ has  an odd  cycle $C$ consisting of edges in $F^-\cup F^+$, 
and as each edge in $F^-\cup F^+$ is degenerate we would have $\sum_{e\in C}\sigma(e){\rm arccos}( c(e))=-|E(C)\cap \Sigma|\pi$, contradicting ${\rm arccos}(c)/\pi\in {\rm MET}(G,\Sigma)$.
Therefore there is a cut $\delta(S)$ in $G$ such that $F^-\subseteq \delta(S)$ and $F^+\cap \delta(S)=\emptyset$.
Let $(G, \Sigma \Delta \delta(S))$ and $c^S$ be the resigning of $G$ and $c$ with respect to $\delta(S)$, respectively.
Then every edge $e\in \Sigma$ satisfies $c(e)>-1$.

Let $F=F^+\cup F^-$.
Also let $(H, \Sigma_H)=(G/F, \Sigma \Delta \delta(S))$ and  $c_H$ be the restriction of $c^S$ to $E(H)$.
Then $(G, \Sigma)$ is a splitting of $(H, \Sigma_H)$ and $c_H$ is nondegenerate.
By Lemma~\ref{lem:switching} and Lemma~\ref{lem:contraction}, 
we obtain ${\sf sd}(G,\Sigma, c)={\sf sd}(G, \Sigma \Delta \delta(S), c^S)\leq 
{\sf sd}(H, \Sigma_H, c_H)+1$, which is at most the right hand side of (\ref{eq:contraction2}).
\end{proof}

By Corollary~\ref{cor:K4} and Lemma~\ref{lem:contraction2} we obtain the upper bound of ${\sf sd}(G, \Sigma)$ for odd-$K_4$ minor free signed graphs.

\begin{corollary}
\label{cor:K4_2}
Let $(G,\Sigma)$ be an odd-$K_4$ minor free signed graph. 
Then ${\sf sd}(G,\Sigma)\leq 2$.
\end{corollary}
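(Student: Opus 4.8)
The plan is to chain together two results already established: Corollary~\ref{cor:K4}, which says ${\sf sd}^*(H,\Sigma_H)\leq 1$ whenever $(H,\Sigma_H)$ is odd-$K_4$-minor free, and Lemma~\ref{lem:contraction2}, which bounds ${\sf sd}(G,\Sigma)$ by $1$ plus the worst nondegenerate singularity degree over all signed graphs $(H,\Sigma_H)$ of which $(G,\Sigma)$ is a splitting. The only genuine content to supply is the observation that the minor-closed class used in Corollary~\ref{cor:K4} is closed under the operation appearing in Lemma~\ref{lem:contraction2}.

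First I would recall that $(G,\Sigma)$ being a splitting of $(H,\Sigma_H)$ means, by the definition given just before Lemma~\ref{lem:contraction2}, that $(H,\Sigma_H)$ is obtained from $(G,\Sigma)$ by a sequence of resignings and contractions of even edges. Both of these are among the three operations (edge deletion, even-edge contraction, resigning) that define the minor relation for signed graphs in Subsection~\ref{subsec:K4}. Hence every such $(H,\Sigma_H)$ is a minor of $(G,\Sigma)$. Since $(G,\Sigma)$ is assumed odd-$K_4$-minor free and odd-$K_4$-minor-freeness is obviously preserved under taking minors (a minor of a minor is a minor), every $(H,\Sigma_H)$ appearing on the right-hand side of \eqref{eq:contraction2} is itself odd-$K_4$-minor free.

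Next I would apply Corollary~\ref{cor:K4} to each such $(H,\Sigma_H)$, obtaining ${\sf sd}^*(H,\Sigma_H)\leq 1$, and therefore ${\sf sd}^*(H,\Sigma_H)+1\leq 2$ for every term in the maximum. Plugging this into the bound of Lemma~\ref{lem:contraction2} gives ${\sf sd}(G,\Sigma)\leq 2$, which is the claim. (One should note the maximum on the right-hand side of \eqref{eq:contraction2} is taken over a nonempty set, since $(G,\Sigma)$ is trivially a splitting of itself, so the bound is meaningful.)

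There is essentially no obstacle here: the work was all done in the preceding results, in particular in the separate paper~\cite{t} that supplies Corollary~\ref{cor:K4}, and in the somewhat delicate bookkeeping of Lemma~\ref{lem:contraction2}. The one point that must be checked carefully — and the only place a slip could occur — is the verification that "splitting" in the sense of Lemma~\ref{lem:contraction2} really does produce only odd-$K_4$-minor-free signed graphs; this is exactly the two-line minor-closedness argument above, and it is the crux of why Corollary~\ref{cor:K4} can be fed into Lemma~\ref{lem:contraction2}.
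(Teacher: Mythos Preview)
Your argument is correct and matches the paper's one-line proof, which simply cites the two results you chain together. The only slip is a labeling issue: the result asserting ${\sf sd}^*(H,\Sigma_H)\leq 1$ for odd-$K_4$-minor-free \emph{signed} graphs is Corollary~\ref{cor:oddK4}, not Corollary~\ref{cor:K4} (the latter is the unsigned consequence); the paper itself cites Corollary~\ref{cor:K4} here, which appears to be a typo, since the signed statement is what is needed.
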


Combining Lemma~\ref{lem:clique_sum}, Corollary~\ref{cor:K4_2}, and (ii)$\Leftrightarrow$(iii) in Theorem~\ref{thm:nonsingular}, we also have the following.
\begin{corollary}
\label{thm:singular_two}
If $G$ has neither $W_n\ (n\geq 5)$ nor a splitting of $W_n\ (n\geq 4)$ as an induced subgraph,
then ${\sf sd}(G)\leq 2$.
\end{corollary}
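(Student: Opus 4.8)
\textbf{Proof proposal for Corollary~\ref{thm:singular_two}.}

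The plan is to deduce the bound directly from the machinery already assembled, in three short steps. First, by the equivalence (ii)$\Leftrightarrow$(iii) in Theorem~\ref{thm:nonsingular}, the hypothesis on $G$ is exactly the statement that $G$ can be built by clique sums from complete graphs and $K_4$-minor free graphs; so fix such a clique-sum decomposition tree whose leaves are the pieces $G_1,\dots,G_m$, each of which is either complete or $K_4$-minor free. Second, I would bound ${\sf sd}$ on each leaf: if $G_i$ is complete then ${\sf sd}(G_i)=1$ by Proposition~\ref{prop:complete} (hence $\leq 2$), and if $G_i$ is $K_4$-minor free then I want ${\sf sd}(G_i)\leq 2$. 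The latter is precisely what Corollary~\ref{cor:K4_2} gives once one views an undirected $K_4$-minor free graph as a signed graph: take the signed graph $(G_i,\emptyset)$ (all edges even), which is trivially odd-$K_4$-minor free because it has no odd cycle at all, so ${\sf sd}(G_i,\emptyset)\leq 2$, and ${\sf sd}(G_i)={\sf sd}(G_i,\emptyset)$ since with $\Sigma=\emptyset$ the signed completion problem reduces to the ordinary one. (Alternatively, and perhaps more cleanly, one invokes the double-edge trick from the proof of Corollary~\ref{cor:K4}: replace each edge of $G_i$ by an even and an odd parallel edge to get an odd-$K_4$-minor free signed graph $(H_i,\Sigma_{H_i})$, apply Corollary~\ref{cor:K4_2} to obtain ${\sf sd}(H_i,\Sigma_{H_i})\leq 2$, and observe that a facial reduction sequence for the signed problem on $(H_i,\Sigma_{H_i})$ restricts to one for the unsigned problem on $G_i$ with the same length, as in the proof of Corollary~\ref{cor:K4}.)

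Third, I would propagate the bound up the clique-sum tree using Lemma~\ref{lem:clique_sum}. Proceeding by induction on the number of pieces: if $G$ is a single leaf the bound is Step~2; otherwise write $G$ as a clique sum of $G'$ and $G''$, each a clique sum of fewer pieces from the same families, so ${\sf sd}(G')\leq 2$ and ${\sf sd}(G'')\leq 2$ by the inductive hypothesis, and then Lemma~\ref{lem:clique_sum} yields ${\sf sd}(G)\leq\max\{{\sf sd}(G'),{\sf sd}(G'')\}\leq 2$. Since Theorem~\ref{thm:nonsingular}(iii) guarantees such a recursive decomposition exists under the hypothesis, this completes the argument.

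The only genuinely nontrivial input here is Corollary~\ref{cor:K4_2} (equivalently, the chain Corollary~\ref{cor:K4}$\to$Lemma~\ref{lem:contraction2}$\to$Corollary~\ref{cor:K4_2}, ultimately resting on Theorem~\ref{thm:oddK4} from \cite{t}), but that is cited as already established, so within the scope of this excerpt there is no real obstacle. The one point that deserves a sentence of care in the write-up is the passage from the \emph{unsigned} $K_4$-minor free case to a statement covered by Corollary~\ref{cor:K4_2}, since that corollary is phrased for signed graphs; I would handle it by the $\Sigma=\emptyset$ observation above, noting explicitly that ${\sf sd}(G_i)={\sf sd}(G_i,\emptyset)$. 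Everything else is a mechanical combination of Proposition~\ref{prop:complete}, Lemma~\ref{lem:clique_sum}, and the equivalence (ii)$\Leftrightarrow$(iii).
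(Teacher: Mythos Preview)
Your overall strategy is exactly the paper's: invoke (ii)$\Leftrightarrow$(iii) of Theorem~\ref{thm:nonsingular} to obtain a clique-sum decomposition into complete and $K_4$-minor free pieces, bound each piece, and propagate with Lemma~\ref{lem:clique_sum}.

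There is, however, one genuine slip. Your claim that ``with $\Sigma=\emptyset$ the signed completion problem reduces to the ordinary one'' is false: in ${\rm P}(G_i,\emptyset,c)$ every edge is in $E\setminus\Sigma$, so the constraint reads $X[i,j]\geq c(ij)$, not $X[i,j]=c(ij)$. The feasible regions differ, and there is no reason for ${\sf sd}(G_i)$ and ${\sf sd}(G_i,\emptyset)$ to coincide. Fortunately your parenthetical alternative --- the double-edge trick from the proof of Corollary~\ref{cor:K4} --- is the right bridge: replacing each edge by an odd/even parallel pair gives an odd-$K_4$-minor free signed graph whose feasible region coincides with that of the unsigned problem, so the facial reduction sequence from Corollary~\ref{cor:K4_2} transfers directly. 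Drop the $\Sigma=\emptyset$ paragraph and keep only the double-edge argument; then your write-up matches the paper's one-line proof.
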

Corollary~\ref{thm:singular_two} is not tight since a graph obtained from $K_4$ by subdividing an edge once has singularity degree equal to two.
Characterizing graphs with singularity degree equal two is left as an open problem.

We also remark that Theorem~\ref{thm:singular_two}  holds even if ${\sf sd}(G)$ is defined over $c:E(G) \rightarrow \mathbb{R}$ (and $c$ is not necessarily to be in ${\cal E}(G)$). 
If $c\notin {\cal E}(G)$ then there is a violation to the metric inequality or the clique inequality (i.e., the PSD condition on cliques) due to the description of ${\cal E}(G)$ by Barret, Johnson, and Loewy~\cite{bjl}. This violation can be detected by the first stage of the facial reduction if $c$ is nondegenerate. 

\section{Example of Large Singularity Degree}
\label{sec:large}
In this section we give a family of graphs whose singularity degree grows linearly  in the number of vertices.
The example $(G^k, q^k)$ consists of a graph $G^k$ and $q^k:V(G^k)\rightarrow \mathbb{
S}^{k-1}$,  which are defined inductively from $k=1$ as follows:
\begin{itemize}
\setlength{\parskip}{0.cm} 
 \setlength{\itemsep}{0.05cm} 
\item  $V(G^1)=\{v_1(=w_1)\}$ and $E(G^1)=\emptyset$;
\item $V(G^k)=V(G^{k-1})\cup \{u_k, v_k, w_k\}$;
\item $E(G^k)=E(G^{k-1})\cup \{w_{k-1}v_k, w_{k-1}w_k, u_kv_{k-1}, u_kv_k, u_kw_k\}$;
\item $q^k(v_i)=q^k(w_i)={\bm e}_i$; 
\item $q^k(u_i)$ is placed in the span of $\{{\bm e}_i, {\bm e}_{i-1}\}$ such that 
$q^k(v_i)$ is on the spherical line between $q^k(v_{i-1})$ and $q^k(u_i)$ for $2\leq i\leq k$.
\end{itemize}
See Figure~\ref{fig:examples}.

\begin{figure}
\centering
\begin{minipage}{0.48\textwidth}
\centering
\includegraphics[scale=0.45]{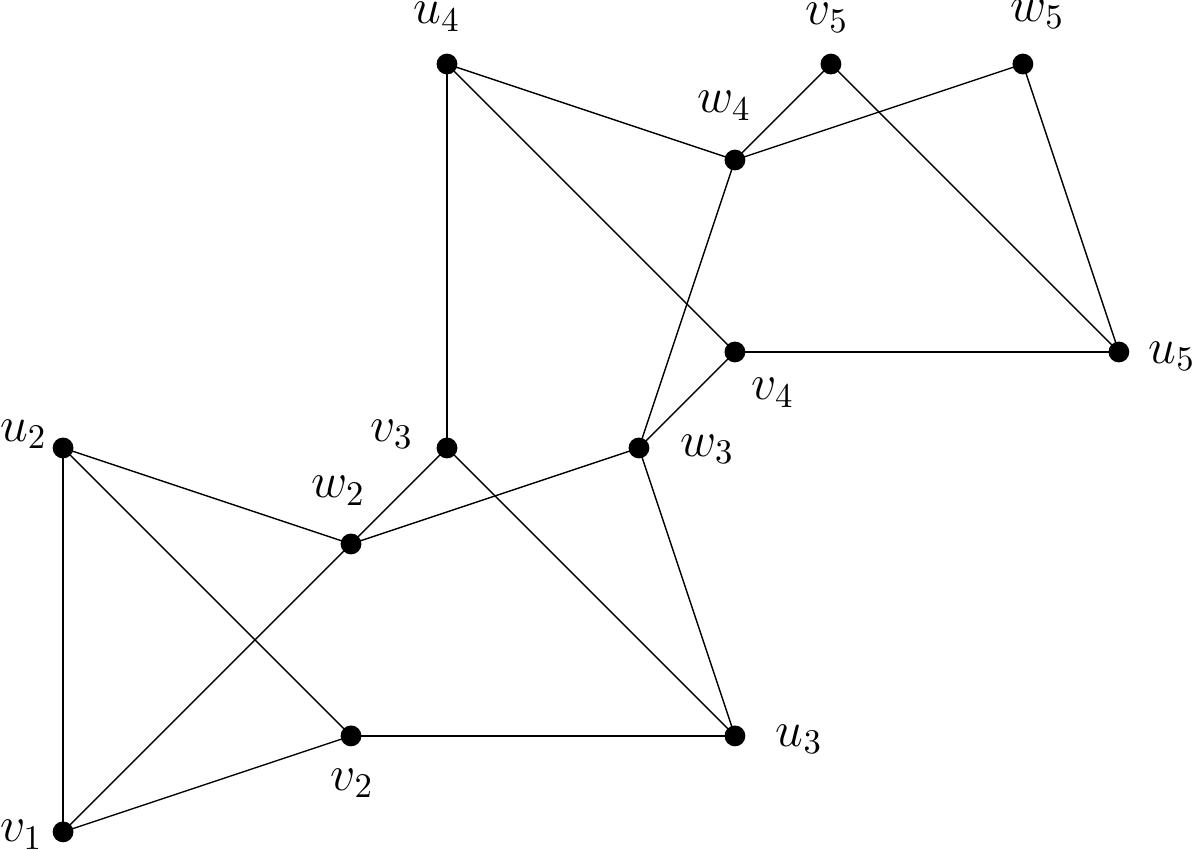}
\par (a)
\end{minipage}
\begin{minipage}{0.46\textwidth}
\centering
\includegraphics[scale=0.53]{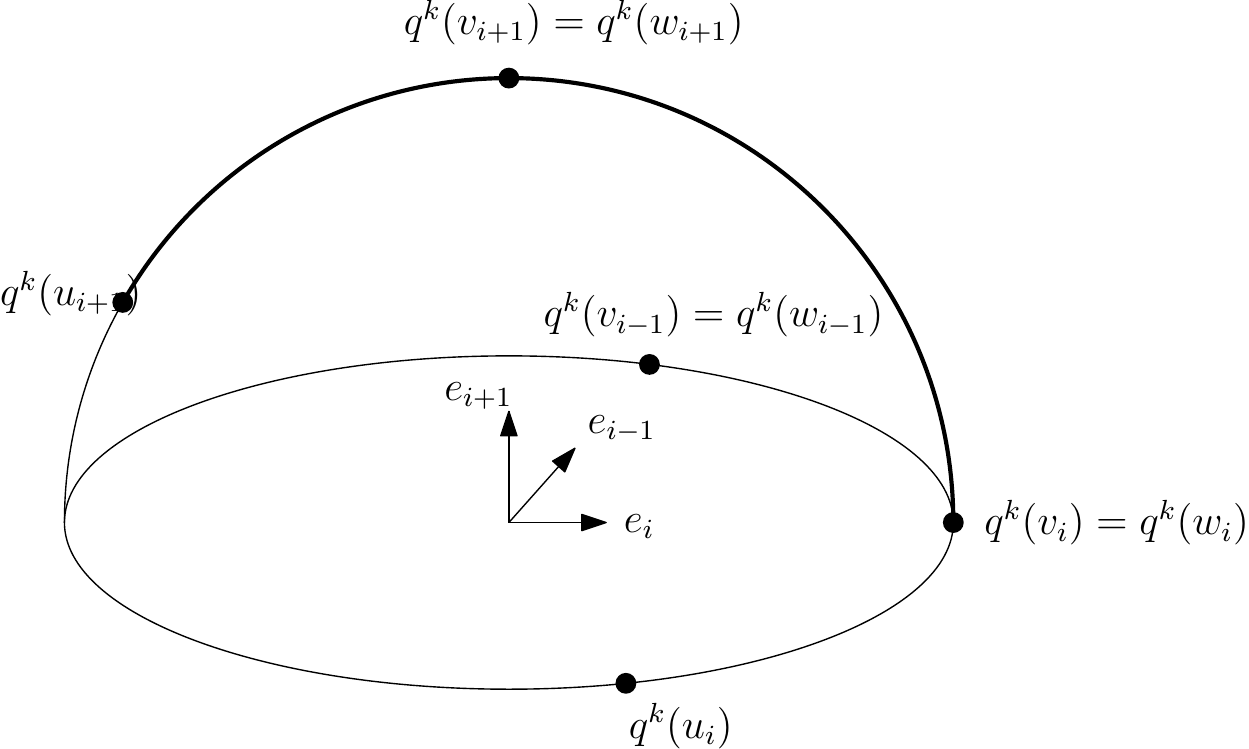}
\par (b)
\end{minipage}
\caption{(a) $G^5$ and (b) a part of $(G^k, q^k)$.}
\label{fig:examples}
\end{figure}

\begin{theorem}
\label{thm:example2}
Let $(G^k,q^k)$ be a spherical framework as defined above, let $X^k={\rm Gram}(q^k)$, 
and let $c^k=\pi_{G^k}(X^k)$.
Then ${\sf sd}(G^k, c^k)\geq k-1=(|V(G^k)|-1)/3$.
\end{theorem}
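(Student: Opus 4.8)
The plan is to combine an identification of the maximum rank of a feasible solution with a ``slow propagation'' phenomenon in the spirit of Lemma~\ref{lem:cycle}. Throughout, call $\{v_j,w_j,u_j\}$ (with $u_1$ absent) the \emph{$j$-th level}; note that in $q^k$ the whole $j$-th level maps into $\spa\{{\bm e}_{j-1},{\bm e}_j\}$ and that edges of $G^k$ run only within a level or between consecutive levels. Writing $\phi_j>0$ for the length of the spherical segment between $q^k(v_j)$ and $q^k(u_j)$, one gets $q^k(u_j)=\cos\phi_j\,{\bm e}_j-\sin\phi_j\,{\bm e}_{j-1}$, so $c^k$ is given by $c^k(w_{j-1}v_j)=c^k(w_{j-1}w_j)=0$, $c^k(u_jv_{j-1})=-\sin\phi_j$ and $c^k(u_jv_j)=c^k(u_jw_j)=\cos\phi_j$. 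I would first show, by induction on $j$, that every feasible $\mathrm{Gram}(q)$ of $\mathrm{P}(G^k,c^k)$ satisfies $q(v_j)=q(w_j)$: given $q(v_{j-1})=q(w_{j-1})=:z$, the $3\times3$ Gram matrices of $\{z,q(u_j),q(v_j)\}$ and $\{z,q(u_j),q(w_j)\}$ both have all entries determined by edge weights together with $z\cdot q(v_j)=q(w_{j-1})\cdot q(v_j)=0$, and direct computation gives determinant $1-\cos^2\phi_j-\sin^2\phi_j=0$; since $z,q(u_j)$ are independent, $q(v_j)$ and $q(w_j)$ are both forced to be the unique unit vector of $\spa\{z,q(u_j)\}$ with inner products $0,\cos\phi_j$ against $z,q(u_j)$. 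Consequently $q(V)\subseteq\spa\{\hat n_1,\dots,\hat n_k\}$, where $\hat n_j$ normalizes the component of $q(u_j)$ orthogonal to $q(v_{j-1})$, so $\rank\mathrm{Gram}(q)\le k$; with $X^k$ this shows the maximum rank of a feasible solution of $\mathrm{P}(G^m,c^m)$ equals $m$ for all $m$, and hence (by (c4)) $\dim{\cal V}^h=k$ for any facial reduction sequence $\omega^1,\dots,\omega^h$ of $\mathrm{P}(G^k,c^k)$.

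The core step is the claim that for any such sequence $V_i$, the set of vertices stressed at the $i$-th stage, is contained in the first $i+1$ levels; I would prove it by induction on $i$ using Lemma~\ref{lem:singularity_stress}. If $V_{i-1}$ lies in levels $\le i$ then $\spa\,q^k(V_{i-1})\subseteq\spa\{{\bm e}_1,\dots,{\bm e}_i\}$, so the projection $\psi_{i-1}$ fixes $\spa\{{\bm e}_{i+1},\dots,{\bm e}_k\}$ and the projected equilibrium condition~(\ref{eq:projected_eq}) for $\omega^i$ at vertices of levels $\ge i+2$ is literally the ordinary equilibrium condition. One then sweeps downward: for $j=k,k-1,\dots,i+2$, using that $v_j$ and $w_j$ are non-adjacent and occupy the same point, that $\Omega^i[V\setminus V_{i-1},V\setminus V_{i-1}]$ is positive semidefinite (so $\omega^i(v_j),\omega^i(w_j)\ge0$), and that the ${\bm e}_{j-1}$- and ${\bm e}_j$-components of the equilibrium equations at $w_{j-1},v_j,w_j,u_j$ force $\omega^i(v_ju_j)=-\omega^i(w_ju_j)$ and hence $\omega^i(v_j)=-\omega^i(w_j)$, one concludes $\omega^i$ vanishes on all of level $j$; the sweep stops at level $i+2$ because $u_{i+1}$ is joined to $v_i$, a vertex of an already-absorbed level, introducing an extra term that breaks the chain (exactly as level $2$ did for $C_n$). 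Thus $\omega^i$ is supported on levels $\le i+1$, so $V_i\subseteq$ levels $\le i+1$.

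To finish it suffices to show level $k$ is stressed at some stage, for then $k\le h+1$, i.e. ${\sf sd}(G^k,c^k)=h\ge k-1=(|V(G^k)|-1)/3$. If level $k$ were never stressed, every $\Omega^i$ would be supported on the subgraph of $G^k$ induced by levels $\le k-1$, which is exactly $G^{k-1}$; the restriction of the sequence to $V(G^{k-1})$ then satisfies (c1),(c2),(c3) for $\mathrm{P}(G^{k-1},c^{k-1})$ with $\langle\Omega^i,X^{k-1}\rangle=0$, and its rank equals $|V(G^{k-1})|-\dim{\cal V}^h_{G^{k-1}}$. In this case ${\cal V}^h={\cal V}^h_{G^{k-1}}\oplus\mathbb R^{\{u_k,v_k,w_k\}}$ and $\dim{\cal V}^h=k$, so this rank equals $(3k-5)-(k-3)=2k-2$, contradicting Proposition~\ref{prop:sequence_prop}, which bounds it by $|V(G^{k-1})|-\rank X^{k-1}=2k-4$.

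I expect the propagation claim to be the main obstacle: one must organize the downward induction carefully, track how $\psi_{i-1}$ distorts the positions near the boundary level $i+1$ (where $u_{i+1}$ becomes parallel to $\psi_{i-1}({\bm e}_{i+1})={\bm e}_{i+1}$), and verify that positive semidefiniteness of $\Omega^i$ on the still-unstressed vertices, combined with $v_j,w_j$ being non-adjacent ``twins'', genuinely forces each level's stress to vanish. The maximum-rank bookkeeping and the exact adjacency pattern of $G^k$ within and between consecutive levels also need care.
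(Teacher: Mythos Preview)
Your proposal is correct and follows the same overall architecture as the paper's proof: establish that every feasible solution has rank at most $k$ (hence $\dim{\cal V}^h=k$), then use Lemma~\ref{lem:singularity_stress} (projected equilibrium plus positive semidefiniteness on the unstressed block) to show that the stressed set can advance by at most one level per facial-reduction step.

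The local mechanics differ in two places. For the propagation, the paper first looks at the $\bm e_\ell$-component of the projected equilibrium at $v_{\ell-1}$ to kill the single cross-level edge $u_\ell v_{\ell-1}$; this forces $\omega^j(u_\ell)=0$, and then the PSD condition on the $\{u_\ell,v_\ell,w_\ell\}$-block (zero diagonal at $u_\ell$) annihilates $\omega^j(u_\ell v_\ell),\omega^j(u_\ell w_\ell)$, after which the remaining entries fall. Your downward sweep instead uses the $\bm e_j$-component at $w_{j-1}$ together with the equalities at $v_j,w_j$ to obtain $\omega^i(v_j)=-\omega^i(w_j)$, and then the PSD diagonal inequalities $\omega^i(v_j),\omega^i(w_j)\ge0$ force both to vanish. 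Both routes work; the paper's version avoids the top-down sweep (each level $\ell\ge j+2$ is handled independently once \eqref{eq:ex1} is known for all such $\ell$), while yours exploits the $v_j$/$w_j$ twin structure more explicitly. For the conclusion that level $k$ must be stressed, the paper infers it in one line from $p(v_k)=p(w_k)$ for every feasible $p$ (so $\bm e_{v_k}\notin{\cal V}^h$), whereas your rank-counting contradiction via restriction to $G^{k-1}$ is a slightly longer but equally valid alternative. Your caveat about the boundary level $i+1$ is well placed: you do use the projected equilibrium at $w_{i+1}$, but only its $\bm e_{i+2}$-component, which $\psi_{i-1}$ leaves unchanged, so no extra work is needed there.
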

\begin{proof}
Due to the definition of $q^k$, for any feasible ${\rm Gram}(p)$  of ${\rm P}(G^k, c^k)$, we have $p(v_i)=p(w_i)$ for every $i$ with $1\leq i\leq k$. 
In particular $p(v_k)=p(w_k)$, and hence $v_k$ and $w_k$ are stressed at the last stage of the facial reduction to ${\rm P}(G^k, c^k)$.
%

Let $\omega^1,\dots, \omega^h$ be the sequence of the stresses obtained by the facial reduction,
and let $V_j$ be the set of stressed vertices at the $j$-th stage.
The statement follows by showing that 
\begin{equation}
\label{eq:ex0}
\{u_i, v_i, w_i\}\cap V_{j}=\emptyset \text{  for every $i, j$ with $i\geq j+2$.}
\end{equation}

The proof is done by induction on $j$.
The claim is trivial for $j=0$, and hence assume it for the $(j-1)$-th stage.
Then we have $q^k(V_{j-1})\subseteq {\rm span}\{{\bm e}_1,\dots, {\bm e}_{j}\}$ by induction and by the definition of $q^k$. 
In particular, ${\bm e}_i$ is orthogonal to $\spa q^k(V_{j-1})$ for $i\geq j+1$.

Consider the equilibrium condition at $v_{i-1}$ projected to the span of ${\bm e}_i$ for $i\geq j+2$.
Then Lemma~\ref{lem:singularity_stress} implies that
\begin{align*}
0=\left( \omega^j(v_{i-1}) p(v_{i-1})+\omega^j(u_{i-1}v_{i-1}) p(u_{i-1})+\omega^j(w_{i-2}v_{i-1}) p(w_{i-2})+\omega^j(u_iv_{i-1}) p(u_i)\right)\cdot {\bm e}_i. 
\end{align*}
Since  $p(v_{i-1})\cdot {\bm e}_i=p(u_{i-1})\cdot {\bm e}_i=p(w_{i-2})\cdot {\bm e}_i=0$ while   $p(u_i)\cdot {\bm e}_i \neq 0$, we get
\begin{equation}
\label{eq:ex1}
\omega^j(u_iv_{i-1})=0 \qquad (i\geq j+2).
\end{equation}

Next, consider the equilibrium condition at $u_i$ projected to the span of ${\bm e}_{i-1}$ for $i\geq j+2$.
By (\ref{eq:ex1})  we have 
\begin{align*}
0=\left( \omega^j(u_{i}) p(u_{i})+\omega^j(u_{i}v_{i}) p(v_{i})+\omega^j(u_iw_{i}) p(w_{i})\right)\cdot {\bm e}_{i-1}. 
\end{align*}
Since  $p(v_{i})\cdot {\bm e}_{i-1}=p(w_{i})\cdot {\bm e}_{i-1}=0$ while   
$p(u_i)\cdot {\bm e}_{i-1} \neq 0$, we get
\begin{equation}
\label{eq:ex2}
\omega^j(u_i)=0 \qquad (i\geq j+2).
\end{equation}

Since $u_i, v_i, w_i$ are not stressed at the $(j-1)$-the stage, 
$\Omega^j[\{u_i, v_i, w_i\}, \{u_i, v_i, w_i\}]\succeq 0$ by Lemma~\ref{lem:singularity_stress}.
However, by (\ref{eq:ex2}), the first diagonal of $\Omega^j[\{u_i, v_i, w_i\}, \{u_i, v_i, w_i\}]$ is equal to zero, which means that 
\begin{equation}
\label{eq:ex3}
\omega^j(u_iv_i)=\omega^j(u_iw_i)=0 \qquad (i\geq j+2).
\end{equation}

By (\ref{eq:ex3}),  the equilibrium conditions at $v_i$ and at $w_i$ 
projected to the span of ${\bm e}_{i-1}$ give
\begin{equation}
\label{eq:ex4}
\omega^j(w_{i-1}v_i)=\omega^j(w_{i-1}w_i)=0\qquad (i\geq j+2)
\end{equation}
while the equilibrium conditions at $v_i$ and at $w_i$ 
projected to the span of ${\bm e}_{i}$ give 
\begin{equation}
\label{eq:ex5}
\omega^j(v_i)=\omega^j(w_i)=0\qquad (i\geq j+2).
\end{equation}
In total the stress of each edge incident to $u_i, v_i$ or $w_i\ (i\geq j+2)$ is equal to zero, and we get (\ref{eq:ex0}).
Since $v_k$ and $w_k$ are stressed at the last stage, we must have $k<h+2$, meaning $h\geq k-1$.
\end{proof}
Notice that $G^k$ has treewidth equal to three.
Combining this result with Lemma~\ref{lem:induced_subgraph} we have the following.
\begin{corollary}
\label{cor:large}
For each positive integer $n$, there is a graph $G$ of $n$ vertices whose treewidth is equal to three
and ${\rm sd}(G)\geq \lfloor (n-1)/3\rfloor$.
\end{corollary}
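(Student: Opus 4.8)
\textbf{Proof plan for Corollary~\ref{cor:large}.} The goal is, for every positive integer $n$, to exhibit an $n$-vertex graph of treewidth three whose singularity degree is at least $\lfloor (n-1)/3\rfloor$. The natural approach is to use the family $\{G^k\}$ from Theorem~\ref{thm:example2} as the building block and patch it up to the exact vertex count by an induced-subgraph argument. First I would set $k=\lfloor (n-1)/3\rfloor$, so that $|V(G^k)| = 3k+1 \le n$, and note that Theorem~\ref{thm:example2} gives ${\sf sd}(G^k)\ge {\sf sd}(G^k,c^k)\ge k-1$. Wait---we need ${\sf sd}(G)\ge k$, not $k-1$; re-reading Theorem~\ref{thm:example2}, the bound is ${\sf sd}(G^k,c^k)\ge k-1=(|V(G^k)|-1)/3$, so with $|V(G^k)|=3k+1$ we actually get ${\sf sd}(G^k)\ge k-1$, and we want the final graph to satisfy ${\sf sd}(G)\ge \lfloor(n-1)/3\rfloor$; so in fact one should take $k$ with $3(k-1)+1 \le$ (available vertices), i.e. choose $k$ so that $k-1 = \lfloor(n-1)/3\rfloor$, hence $|V(G^k)| = 3k+1 = 3\lfloor(n-1)/3\rfloor + 4$. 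The cleanest bookkeeping: let $m = \lfloor (n-1)/3 \rfloor$ and use $G^{m+1}$, which has $3(m+1)+1 = 3m+4$ vertices and singularity degree at least $(m+1)-1 = m$. Then I must check $3m+4 \le n$, i.e. $3\lfloor(n-1)/3\rfloor \le n-4$, which fails for small $n$; so for small $n$ the bound $\lfloor(n-1)/3\rfloor$ is itself small (zero or one) and handled directly by Theorem~\ref{thm:sd0} and Proposition~\ref{prop:complete}. This boundary accounting is the part that needs care, and I would state it cleanly rather than gloss over it.

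The main structural steps are then: (1) observe that $G^k$ has treewidth exactly three — this follows by exhibiting a tree decomposition of width three (the bags $\{v_{i-1}, w_{i-1}, u_i, v_i\}$ and $\{v_{i-1},w_{i-1},w_i,u_i\}$ chained along $i$ cover all edges and have size four) together with the presence of $K_4$ as a minor (or subgraph), which forces treewidth $\ge 3$; this is already asserted in the line "Notice that $G^k$ has treewidth equal to three." (2) Given $n$, take the appropriate $G^k$ from step (1), and add $n - |V(G^k)|$ further vertices to form an $n$-vertex graph $G$ that contains $G^k$ as an \emph{induced} subgraph and still has treewidth three. The simplest choice is to attach each new vertex as a leaf (pendant vertex) adjacent to a single existing vertex, or to attach them all as leaves to one fixed vertex; pendant vertices never increase treewidth beyond $\max(1, \text{tw})=3$, and a leaf-extension keeps $G^k$ induced because no edges are added among vertices of $V(G^k)$. (3) Apply Lemma~\ref{lem:induced_subgraph} (with $\Sigma = \emptyset$) to conclude ${\sf sd}(G) \ge {\sf sd}(G^k) \ge k-1 = \lfloor(n-1)/3\rfloor$.

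The step I expect to require the most attention is reconciling the index arithmetic so that the exponent $k$ of $G^k$, the vertex count $3k+1$, the singularity-degree lower bound $k-1$, and the target $\lfloor(n-1)/3\rfloor$ all line up, including handling the regime where $n$ is too small for a nontrivial $G^k$ to fit (there the claimed bound is $0$ or $1$ and follows from Theorem~\ref{thm:sd0} / Proposition~\ref{prop:complete} combined with Lemma~\ref{lem:induced_subgraph}). Everything else — that pendant vertices preserve treewidth three and keep $G^k$ induced, and that Lemma~\ref{lem:induced_subgraph} transfers the lower bound — is routine. I would write the proof as: fix $n$; if $n \le$ (small threshold), dispatch directly; otherwise set $k$ so that $k-1 = \lfloor(n-1)/3\rfloor$ and $|V(G^k)| = 3k+1 \le n$, let $G$ be $G^k$ together with $n - (3k+1)$ pendant leaves attached to $v_1$, observe $\mathrm{tw}(G) = 3$, and invoke Theorem~\ref{thm:example2} and Lemma~\ref{lem:induced_subgraph}.
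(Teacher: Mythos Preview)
Your overall approach—take the graph $G^k$ from Theorem~\ref{thm:example2}, pad it with pendant vertices to reach exactly $n$ vertices, and invoke Lemma~\ref{lem:induced_subgraph}—is precisely the paper's intended argument. The paper's own proof is a one-line reference to Lemma~\ref{lem:induced_subgraph}, so you have not diverged at all in strategy.

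However, your vertex count is off, and this is what is generating the spurious boundary difficulties. From the inductive definition, $|V(G^1)|=1$ (recall $v_1=w_1$ is a single vertex) and each subsequent level adds three vertices $\{u_k,v_k,w_k\}$, so
\[
|V(G^k)| \;=\; 1 + 3(k-1) \;=\; 3k-2,
\]
not $3k+1$. This is consistent with the identity $k-1=(|V(G^k)|-1)/3$ stated in Theorem~\ref{thm:example2}. With the correct count, set $m=\lfloor (n-1)/3\rfloor$ and take $k=m+1$; then $|V(G^k)|=3m+1\le n$ (since $3m\le n-1$), and Theorem~\ref{thm:example2} gives ${\sf sd}(G^k)\ge k-1=m$. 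Attaching $n-(3m+1)\ge 0$ pendant leaves and applying Lemma~\ref{lem:induced_subgraph} finishes the argument with no case analysis needed. Your worry that ``$3m+4\le n$'' might fail, and the resulting detour through Theorem~\ref{thm:sd0} and Proposition~\ref{prop:complete}, stems entirely from the miscount and can be deleted.
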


Corollary~\ref{cor:large} gives a negative answer to a question posed by Anthony Man-Cho So at BIRS Global Rigidity workshop in July 2015 in Banff, who asked whether the singularity degree of the  matrix completion problem can be bounded by a sublinear function in the number of vertices. (His question was posed in terms of the Euclidean matrix completion problem. For the Euclidean matrix completion the projection of $(G^k,q^k)$ to the plane gives an example.) 

\section{Super Stability}
\label{sec:super}
Following a terminology introduced by Connelly~\cite{c}, we say that 
a spherical framework $(G, p)$ in $\mathbb{S}^{d-1}$ is {\em super stable} 
if there exists an equilibrium PSD stress $\omega$ such that
$\corank \Omega=d$ and 
\begin{equation}
\label{eq:conic}
\text{there is no nonzero }S\in {\cal S}^d  \text{ satisfying } p(i)^{\top} S p(j)=0\ \text{for all } ij\in V(G)\cup J_{\omega},
\end{equation}
where $J_{\omega}=\{ij\in E(G)\mid \omega(ij)\neq 0\}$.
Note that a certificate $\omega$ for super stability  coincides with a length-one  universal rigidity certificate given in Proposition~\ref{prop:universal}.
Therefore,  super stability is a sufficient condition for  universal rigidity.

One of recent major topics in rigidity is to understand the exact relation between super stability and universal rigidity~\cite{gt,cg15,cg,jh,cg15b}. 
It was shown by Gortler and Thurston~\cite{gt}, for the unsigned case, that
universal rigidity is equivalent to super stability if $p$ is generic (i.e., the set of coordinates is algebraically independent modulo the defining equation of the unit sphere).
Recently, Connelly and Gortler~\cite{cg15b} gave an algebraic characterization of super stable frameworks. 
Our analysis of the singularity degree can be adapted to establish the following new relation between super stability and universal rigidity at the level of graphs.
\begin{theorem}
\label{thm:super}
Let $G$ be a graph.
Any universally rigid spherical framework $(G,p)$  of $G$ is super stable 
if and only if $G$ is chordal.
\end{theorem}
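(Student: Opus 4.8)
The plan is to prove the two directions separately, and in both directions the key is the structural link between ``super stability of every universally rigid framework'' and the behavior of the facial reduction sequence. For the ``if'' direction, assume $G$ is chordal and let $(G,p)$ be universally rigid in $\mathbb{S}^{d-1}$. Since $G$ is chordal, Theorem~\ref{thm:sd1} gives ${\sf sd}(G)\leq 1$, so ${\sf sd}(G,\pi_G({\rm Gram}(p)))\leq 1$. Hence there is a single stress $\omega=\omega^1$ satisfying (c1)(c2)(c3)(c4) for ${\rm P}(G,c)$ with $c=\pi_G({\rm Gram}(p))$; by Proposition~\ref{prop:super} this $\omega$ is an equilibrium PSD stress with $\corank\Omega=d$ (from (c4) and $\rank{\rm Gram}(p)=d$). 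What remains is to upgrade $\omega$ to a super-stability certificate, i.e.\ to verify the nondegeneracy condition~(\ref{eq:conic}). Here I would invoke Proposition~\ref{prop:unique}: universal rigidity means ${\rm P}(G,c)$ has a unique solution, so there is \emph{some} sequence $\{\omega^1,\dots,\omega^k\}$ satisfying (c1)(c2)(c3)(c5) and (c6)/(c7). The point is that when ${\sf sd}(G)\leq 1$ the sequence collapses to length one: using property (c5) one can enlarge the single stress $\omega$ coming from the facial reduction so that $J_{\{\omega\}}=J_P$, and since the facial reduction already reaches a maximum-rank (hence, by uniqueness, the unique) solution in one step, the subspace ${\cal V}^1$ equals ${\cal V}^k$ of any longer certificate. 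Then condition (c7) — equivalently (\ref{eq:conic}) via Proposition~\ref{prop:super}'s identification $p(i)Sp(j)=0 \Leftrightarrow \langle P^\top S P, E_{ij}\rangle=0$ — transfers verbatim to our length-one $\omega$, so $(G,p)$ is super stable.

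For the ``only if'' direction, suppose $G$ is not chordal; then $G$ has a hole $H=C_n$ with $n\geq 4$. I would construct a universally rigid spherical framework $(G,p)$ that is \emph{not} super stable, by starting from the framework on the hole and extending it to all of $G$. Take $p$ restricted to $V(H)$ to be the configuration from the proof of Lemma~\ref{lem:cycle} (the $v_1,\dots,v_n$ on a spherical line with $p(v_1)=p(v_2)$), for which ${\rm P}(C_n,c_H)$ has a unique solution but ${\sf sd}(C_n,c_H)\geq 2$. Extend $p$ to the remaining vertices of $G$ exactly as in the proof of Lemma~\ref{lem:induced_subgraph}: place each $v\in V(G)\setminus V(H)$ at a new standard basis vector orthogonal to $\spa p(V(H))$. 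The resulting framework $(G,p)$ is universally rigid because the extra vertices sit at mutually orthogonal points so the only completion consistent with the diagonal and edge constraints on $V(H)$ (which is rigid) pins them down; more carefully, one shows ${\rm P}(G,c)$ inherits unique solvability from ${\rm P}(H,c_H)$ by the same induced-subgraph argument used in Lemma~\ref{lem:induced_subgraph} (no edge of $G$ outside $H$ can create a second solution, since the new points are orthonormal and independent of everything). Finally, if $(G,p)$ were super stable, its certificate $\omega$ would be a length-one universal-rigidity certificate (as noted just before the theorem), so by Lemma~\ref{lem:induced_subgraph}'s argument the restriction of $\omega$ to $H$ would be a length-one certificate for ${\rm P}(H,c_H)$ — contradicting ${\sf sd}(C_n,c_H)\geq 2$. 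Hence $(G,p)$ is universally rigid but not super stable.

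The main obstacle I expect is in the ``if'' direction: carefully arguing that the nondegeneracy/Strong-Arnold condition (c6)–(c7) of the (a priori longer) uniqueness certificate from Proposition~\ref{prop:unique} can be realized already by the single stress produced by one step of facial reduction. The subtlety is that (c6) depends on both ${\cal V}^k$ \emph{and} the set $J_{\{y^1,\dots,y^k\}}$, and one must ensure that passing to the length-one sequence does not shrink $J$ (which would make (c6) harder to satisfy) — this is precisely what the (c5)-enforcement procedure in Subsection~\ref{subsec:unique} is for, so the argument is: run facial reduction (one step, since ${\sf sd}(G)\le1$), then append vectors via the auxiliary LP to force (c5), keeping rank $n-d$ and keeping the sequence ``effectively'' length one in the sense that ${\cal V}$ does not change; then (c6) for this sequence is equivalent to (c6) for any uniqueness certificate because both have the same ${\cal V}^k$ and the same $J$, and the latter holds by universal rigidity. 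I would also need the easy observation that when ${\sf sd}\le 1$, a maximum-rank solution reached by one facial reduction step is automatically the unique solution here, so that the $J$ from facial reduction and the $J_P$ agree after the enforcement step — which is exactly (c5).
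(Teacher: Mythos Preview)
Your ``only if'' direction is essentially the paper's argument: take the hole configuration from Lemma~\ref{lem:cycle}, extend via the construction in Lemma~\ref{lem:induced_subgraph}, observe universal rigidity is preserved, and note that a super-stability certificate would give a length-one facial reduction certificate restricting to the hole, contradicting ${\sf sd}(C_n,c_H)\geq 2$. That is fine.

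Your ``if'' direction, however, takes a different route from the paper and has a genuine gap exactly where you flag it. You obtain a single PSD equilibrium stress $\omega^1$ with $\corank\Omega^1=d$ from ${\sf sd}(G)\leq 1$, and then try to upgrade it so that the nondegeneracy condition~(\ref{eq:conic}) holds on $V\cup J_{\omega^1}$. Your proposed fix---the (c5)-enforcement of Subsection~\ref{subsec:unique}---does not achieve this: that procedure \emph{appends further stresses} $\omega^2,\dots,\omega^m$ to the sequence. You are right that $\mathcal V$ does not change, so the resulting sequence still has rank $n-d$ and satisfies (c6); but super stability demands a \emph{single} globally PSD stress, and you cannot in general collapse the sequence. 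Writing $\Omega^1=\begin{pmatrix}0&0\\0&B\end{pmatrix}$ (with $B\succ 0$) in the decomposition $\mathcal V^1\oplus(\mathcal V^1)^\perp$, each appended $\Omega^j$ has the form $\begin{pmatrix}0&C_j^\top\\ C_j& D_j\end{pmatrix}$ (zero $(1,1)$-block, since it is PSD on $\mathcal V^1$ and vanishes there as a quadratic form), and then any combination $N\Omega^1+\sum_{j\geq 2}\Omega^j$ is PSD only if all $C_j=0$. There is no reason for that to hold, so you do not get a single $\Omega\succeq 0$ with $J_\omega\supseteq J_{\{\omega^1,\dots,\omega^m\}}$.

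The paper avoids this by arguing structurally rather than through the abstract bound ${\sf sd}(G)\leq 1$: it shows that the equivalence ``universally rigid $\Leftrightarrow$ super stable'' is preserved under clique sum, by checking that condition~(\ref{eq:conic}) survives the explicit construction of $\Omega=\Omega_1+\Omega_2$ in the proof of Lemma~\ref{lem:clique_sum}. This gives the single PSD stress directly, together with control over its support, which is precisely what your abstract route loses.
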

\begin{proof}
We only give a sketch of the poof.
The sufficiency follows by observing that the equivalence between universal rigidity and super stability is preserved by clique sum. This can be checked by showing that (\ref{eq:conic}) is preserved in the construction of $\Omega$ in the proof of Lemma~\ref{lem:clique_sum}.

The necessity follows by observing that the example $(C_n,p)$ given in the proof of Lemma~\ref{lem:cycle} is universally rigid, and the universal rigidity is preserved in the construction of frameworks in the proof of Lemma~\ref{lem:induced_subgraph}.
\end{proof}

We say that a spherical framework $(G,p)$ is {\em nondegenerate} if $p(i)\cdot p(j)\neq \pm 1$ for every edge $ij\in E$.
\begin{theorem}
Let $G$ be a graph.
Any nondegenerate universally rigid spherical framework $(G,p)$ of $G$ is super stable
if and only if $G$ has neither $W_n\ (n\geq 5)$ nor a proper splitting of $W_n\ (n\geq 4)$ as an induced subgraph, 
or equivalently, $G$ is the clique sum of chordal graphs and $K_4$-minor free graphs.
\end{theorem}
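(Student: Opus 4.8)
The plan is to follow exactly the same two-direction strategy as in Theorem~\ref{thm:super}, but replacing ``chordal'' by ``clique sum of chordal graphs and $K_4$-minor free graphs'' and ``universal rigidity'' by ``nondegenerate universal rigidity'', and using the nondegenerate machinery (Corollary~\ref{cor:K4}, Lemma~\ref{lem:nonsingular}) in place of the plain ones. For the sufficiency, I would first observe that, by the equivalence (ii)$\Leftrightarrow$(iii) of Theorem~\ref{thm:nonsingular}, it suffices to show that the property ``every nondegenerate universally rigid framework is super stable'' is enjoyed by complete graphs and by $K_4$-minor free graphs, and is preserved under taking clique sums. For complete graphs this is immediate from Proposition~\ref{prop:complete}: a length-one facial reduction sequence already gives a super-stability certificate, and the nondegeneracy condition (\ref{eq:conic}) holds because $J_\omega\cup V(G)=E(K_n)\cup V(K_n)$ covers all entries. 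For $K_4$-minor free graphs, the key input is Corollary~\ref{cor:K4}, which gives a single stress $\omega$ realizing strict complementarity with a maximum-rank solution; if the framework is universally rigid this maximum-rank solution is the unique one, so $\corank\Omega=d$, and condition (c6)/(c7) of Proposition~\ref{prop:unique} (equivalently (\ref{eq:conic})) must then hold, yielding super stability. For the clique-sum step I would reuse verbatim the construction of $\Omega=\Omega_1+\Omega_2$ from the proof of Lemma~\ref{lem:clique_sum} (with $k=1$), and check, as sketched in the proof of Theorem~\ref{thm:super}, that (\ref{eq:conic}) for the pieces implies (\ref{eq:conic}) for the sum; here one uses that $S$ restricted to the coordinates of each $G_i$ lies in the span of $\{xx^\top: x\in {\cal V}^1_i\}$ and is orthogonal to the $A_e$'s of $G_i$, hence vanishes by the piecewise nondegeneracy, so $S=0$.

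For the necessity, I would argue the contrapositive: if $G$ contains $W_n\ (n\ge 5)$ or a proper splitting of $W_n\ (n\ge 4)$ as an induced subgraph, then $G$ has a nondegenerate universally rigid framework that is not super stable. The frameworks $(W_n,p_1)$, $(G_5,p_5)$, $(G_6,p_6)$, $(G_7,p_7)$ constructed in the proof of Lemma~\ref{lem:nonsingular} are all nondegenerate (by construction every edge length is strictly between $-1$ and $1$ — the only coincidences $p(v_i)=p(w_i)$ occur for \emph{non-adjacent} vertices), and each is shown there to have a \emph{unique} completion, i.e.\ to be universally rigid, while having singularity degree $\ge 2$; by the last sentence of Section~\ref{subsec:facial} (strict complementarity holds iff the singularity degree is at most one), no single PSD equilibrium stress can certify rank-maximality, hence none can be a super-stability certificate. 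To pass from the induced-subgraph $H$ of $G$ to $G$ itself, I would use the same extension trick as in the proof of Lemma~\ref{lem:induced_subgraph}: place the vertices of $V(G)\setminus V(H)$ on mutually orthogonal coordinate directions orthogonal to $\spa p(V(H))$; this keeps the framework nondegenerate, preserves unique completability (the extra vertices are pinned down by orthogonality just as in Lemma~\ref{lem:induced_subgraph}), and preserves singularity degree $\ge 2$, so the extended framework on $G$ is a nondegenerate universally rigid but non-super-stable framework.

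The main obstacle I expect is the clique-sum step of the sufficiency direction: one must verify that condition (\ref{eq:conic}) genuinely glues, which requires knowing not just that each piece's certificate satisfies (\ref{eq:conic}) but that the glued stress matrix $\Omega=\Omega_1+\Omega_2$ has kernel \emph{exactly} the span of $p(V(G))$ (the $\corank\Omega=d$ requirement) — this is where the uniqueness of the completion of each piece, forced by universal rigidity of the whole, is used, together with the row-independence of the shared-clique block $P_S$ exploited in Lemma~\ref{lem:clique_sum}. A secondary subtlety is checking that universal rigidity of $G$ actually descends to each clique-summand $G_i$ (so that Corollary~\ref{cor:K4} applies with uniqueness, not just with a maximum-rank solution); this follows because a non-unique completion of $G_i$ would, by the clique-sum gluing of solutions as in (\ref{eq:sum}), produce a non-unique completion of $G$. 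I would spell these two points out carefully and leave the remaining verifications, which are routine adaptations of the proofs of Lemma~\ref{lem:clique_sum}, Lemma~\ref{lem:induced_subgraph}, and Theorem~\ref{thm:super}, to the reader.
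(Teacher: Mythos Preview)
Your plan matches the paper's proof: sufficiency via the clique-sum decomposition into the two base classes, necessity by lifting the universally rigid frameworks of Lemma~\ref{lem:nonsingular} through the construction of Lemma~\ref{lem:induced_subgraph}.

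The one substantive difference is how you handle the $K_4$-minor free base case. The paper does not re-derive it from ${\sf sd}^*\le 1$; it simply quotes \cite[Corollary~6.3]{t}, which already asserts that for odd-$K_4$-minor free signed tensegrities (hence for $K_4$-minor free graphs) nondegenerate universal rigidity and super stability coincide. Your alternative route via Corollary~\ref{cor:K4} is legitimate and more self-contained, but the justification you give is not quite right. You claim that once $\corank\Omega=d$ and the completion is unique, ``condition (c6)/(c7) of Proposition~\ref{prop:unique} must then hold''; but Proposition~\ref{prop:unique} gives (c6) only for a sequence satisfying (c5), which in the equality-constrained problem forces $J=E(G)$, and the single stress coming from Corollary~\ref{cor:K4} need not have $J_\omega=E(G)$. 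Your parenthetical for complete graphs, ``$J_\omega\cup V(G)=E(K_n)\cup V(K_n)$'', is likewise false in general: for $K_4$ with three points equiangular on a great circle and the fourth at the pole, the unique equilibrium PSD stress of corank $3$ is supported only on the triangle. The easy fix is to bypass Proposition~\ref{prop:unique} and argue directly: if some nonzero $S\in\mathcal S^d$ satisfied $p(i)^\top S p(j)=0$ for every $ij\in V(G)\cup E(G)$, then $X+\varepsilon P^\top SP$ would be a distinct feasible completion for small~$\varepsilon$, contradicting universal rigidity. This gives the conic condition over $V(G)\cup E(G)$, which is the form actually used, and your argument then goes through. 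For the necessity direction and the clique-sum gluing your outline coincides with the paper's sketch; both leave the preservation of universal rigidity under the extension of Lemma~\ref{lem:induced_subgraph} as a remark.
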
  
\begin{proof}
In \cite[Corollary 6.3]{t} it was shown that, if $(G,\Sigma)$ is odd-$K_4$-minor free and $(G,\Sigma, p)$ is nondegenerate,
then  $(G, \Sigma, p)$ is universally rigid if and only if $(G, \Sigma, p)$ is super stable.
This in particular implies that  universal rigidity is equivalent to super stability for every nondegenerate spherical framework $(G, p)$
if $G$ is $K_4$-minor free.
Since the property is preserved by clique sum, 
the sufficiency follows from (ii)$\Leftrightarrow$(iii) in Theorem~\ref{thm:nonsingular}.

The necessity follows by observing that the frameworks given in the proof of Lemma~\ref{lem:nonsingular} are universally rigid, and the universal rigidity is preserved in the construction of frameworks in the proof of Lemma~\ref{lem:induced_subgraph}.
\end{proof}

\section{Conclusion}
We have introduced new graph parameters, singularity degree and nondegenerate singularity degree, and gave a characterization of the class of graphs whose parameter value is at most one for each parameter.

There are quite a few  remaining questions. 
An obvious question is to give a characterization of graphs with singularity degree at most two.
Another interesting open problem is to give a characterization of signed graphs whose singularity degree is at most one. Since Lemma~\ref{lem:cycle} can be extended to any odd cycle of length $n\geq 4$ in the signed setting, Lemma~\ref{lem:induced_subgraph} implies that, if ${\sf sd}(G,\Sigma)\leq 1$, then $(G,\Sigma)$ has no odd hole.
However the structure of odd-hole free signed graphs seems to be much more complicated than that of chordal graphs (see \cite{ccv} for a special case).
Extending Theorem~\ref{thm:super} to signed graphs also remains unsolved.
%

Using  a linear correspondence between an open hemisphere and an Euclidean space, one can translate each argument of this paper  to establish the corresponding result for the Euclidean matrix completion problem. However, an exact relation of the singularity degrees of the two completion problems is not clear.

\section*{Acknowledgement}
The author would like to thank  Monique Laurent for fruitful discussions on the topic of this paper.
An example of large singularity degree for a cycle was first pointed out by her. 
She also suggested several key notions used in this paper to analyze the singularity degree.

The author would like to thank Henry Wolkowicz for bringing our attention to the case when the singularity degree is equal to  zero.

This work was supported by JSPS Postdoctoral Fellowships for Research Abroad, 
JSPS Grant-in-Aid for Young Scientist (B) 15K15942, 
and JSPS Grant-in-Aid for Scientific Research (C) 15KT0109.

\end{document}